%% file: Shadows.tex
\documentclass[11pt]{article}
\usepackage[a4paper, margin=1in]{geometry}
\usepackage{amsmath}
\usepackage{amsthm}
\usepackage{bbm}
\usepackage{amssymb}
\usepackage{graphicx}
\graphicspath{{Pictures/}}
\usepackage{amsfonts}
\usepackage{xcolor}
\usepackage{pdfpages}
\usepackage{enumerate}

\usepackage[hidelinks]{hyperref}
\usepackage[justification=centering]{caption}
\usepackage{tikz}
\usepackage{booktabs}

\begin{document}

\newtheorem{theorem}{Theorem}
\numberwithin{theorem}{section}
\newtheorem{proposition}[theorem]{Proposition}
\newtheorem{lemma}[theorem]{Lemma}
\newtheorem{corollary}[theorem]{Corollary}
\theoremstyle{definition}
\newtheorem{definition}[theorem]{Definition}
\newtheorem{example}[theorem]{Example}
\newtheorem{remark}[theorem]{Remark}
\newtheorem{conjecture}[theorem]{Conjecture}
\newtheorem{algorithm}[theorem]{Algorithm}

\newcommand{\uw}{\mathcal{U}(W,X)}
\newcommand{\W}{$(W,S)$}
\newcommand{\ix}{\textbf}
\newcommand{\tr}{\textcolor{red}}
\newcommand{\sg}{$\Sigma$}
\newcommand{\x}{\mathcal{X}}
\newcommand{\C}{\mathcal{C}}
\newcommand{\tw}{T_{w^{-1}}^{-1}}
\newcommand{\mch}{\mathcal{H}}
\newcommand{\aw}{\tilde{A}_2}
\newcommand{\Sh}[1]{\textnormal{Sh}(#1)}
\newcommand{\nSh}[1]{\textnormal{Sh}^\textnormal{c}(#1)}
\newcommand{\PSh}[1]{\textnormal{PSh}(#1)}
\newcommand{\nPSh}[1]{\textnormal{Annex}(#1)}
\newcommand{\wk}{w_k}
\newcommand{\wj}{w_j}
\newcommand{\wi}{w_i}
\newcommand{\B}[1]{B(#1)}
\newcommand{\Hid}{H^{\mathbf{1}}}
\newcommand{\Hinf}{H^\infty}
\newcommand{\cinf}{\mathcal{C}_\infty}

\title{Computations of $\aw$ Bruhat intervals via shadows}

\author{Megan Masters}
\maketitle

\begin{abstract}
    We develop an explicit geometric and algebraic description of shadows in the affine Coxeter complex of type $\aw$, by introducing a coordinate system based on a decomposition of the complex into tunnels and channels. We also provide an algorithm for converting arbitrary reduced words into coordinates. Using this framework, we identify geometric symmetries of shadows and show that they are governed by the underlying channel structure. This allows us to derive explicit, piecewise formulas for the cardinality of shadows in $\aw$, depending on the parity of the coordinates. Furthermore, we establish a simple criterion for shadow membership via a counting function that detects admissible positions within channels. These results provide a concrete and computationally effective description of shadows in affine type $\aw$, bridging the gap between combinatorial definitions and geometric realisations.

\end{abstract}

\section{Introduction}

First introduced in the 1930s, the study of Coxeter groups and their associated geometric realisations, known as Coxeter complexes, remains a central theme in modern algebraic combinatorics and representation theory \cite{COXETER,ABRAMENKOBROWN,RON}. These complexes form fundamental examples of buildings \cite{TITS1}, which were constructed to describe semisimple algebraic groups and Lie groups \cite{TITS}. The combinatorial structure of these groups is deeply encoded in the Bruhat order, a partial ordering that governs the decomposition of flag varieties and the behaviour of Kazhdan--Lusztig polynomials  \cite{KL}. 

Galleries and shadows provide a geometric way to study Bruhat order. For an element $w$ in a Coxeter group $W$, the shadow $\Sh{w}$ consists of the set of all elements $x \in W$ that can be reached via folding operations on a minimal gallery from the identity to $w$. Under the trivial orientation of the Coxeter complex, the shadow $\Sh{w}$ is precisely the Bruhat interval $[1, w]$ \cite{SHA}. While the definition of a shadow is conceptually straightforward, the explicit characterisation of its elements in the infinite affine case presents significant geometric challenges \cite{WILD}. Determining whether one alcove lies within the shadow of another alcove traditionally requires the construction of folded galleries or recursive definitions, processes that lack a direct, closed-form computational approach.

The primary objective of this paper is to give such a direct description in the $\aw$ complex. Among the affine types, the $\aw$ Coxeter complex is the simplest irreducible affine Coxeter complex of rank greater than one and admits a geometric realisation as a tiling of the Euclidean plane by equilateral triangles \cite{DAVIS}. In this paper, we develop an explicit linear-time algorithm in $\aw$ that allows for the precise calculation of shadow membership and size.

The foundation of our approach is the introduction of a new coordinate system on $\aw$. By partitioning the $\aw$ tiling into \ix{tunnels}, sequences of alcoves bounded by parallel hyperplanes (see Definition \ref{tunnelsdef}), we provide unique coordinates for every element in the group. We develop the Words-to-Coordinates Algorithm (Algorthim \ref{algorithm}) that maps any reduced word in the generators $\{s_0, s_1, s_2\}$ to these coordinates and prove that this mapping is invariant under the braid relations $(s_i s_j)^3 = 1$. This leads to the following theorem, which ensures that our algorithm is well defined with respect to the underlying Coxeter group structure.

\begin{theorem}
    For any element $w\in \aw$, and any reduced word $s_{i_1}\cdots s_{i_k}$ representing $w$, Words-to-Coordinates Algorithm outputs the coordinates $(n,m)$ of the alcove corresponding to $w$. 
\end{theorem}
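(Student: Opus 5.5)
The proof has two parts: correctness along a single word, and independence of the chosen reduced word. Since every reduced word for $w$ traces a minimal gallery from the fundamental alcove to the alcove of $w$, it is enough to show that the algorithm tracks this gallery step by step.

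\emph{Correctness along a single word.} I would argue by induction on the length $k$. The invariant is that after processing the prefix $s_{i_1}\cdots s_{i_j}$, the algorithm's state (the current coordinates $(n,m)$, together with whatever auxiliary data it records, such as orientation or parity of the current alcove within its tunnel) is exactly the data of the alcove $s_{i_1}\cdots s_{i_j}$. The base case is the identity alcove at its prescribed coordinates.

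For the inductive step, right multiplication by $s_{i_{j+1}}$ moves the current alcove across one of its three walls. Which wall this is depends on the type (label) of that wall, not on the alcove's position. So I would check, case by case, how crossing a wall of each type changes the tunnel coordinate and the position-within-tunnel coordinate:
\begin{itemize}
\item the case split is by the generator $s_0, s_1, s_2$ and by the parity or orientation (up/down triangle) of the current alcove;
\item in each case, crossing along the tunnel changes the in-tunnel index by $\pm1$, while crossing a tunnel-bounding hyperplane changes the tunnel index.
\end{itemize}
These should be precisely the update rules of Algorithm~\ref{algorithm}. The verification is a finite check against Definition~\ref{tunnelsdef}, using the fact that the labelling of walls is invariant under translations by the coroot lattice. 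That invariance reduces the check to finitely many local configurations.

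\emph{Independence of the reduced word.} The local argument above already gives well-definedness for free. The output is a function of the alcove reached, and the alcove reached depends only on the group element $w$, since the action on alcoves is simply transitive. Even so, I would also record a direct check that the algorithm's updates commute with the braid moves $s_is_js_i = s_js_is_j$. Applying either side from the same state should yield the same final state, for each of the six ordered pairs $(i,j)$ and each admissible starting orientation. By Matsumoto's theorem (Tits' word property), any two reduced words are connected by braid moves, so this check gives the theorem independently of the geometric argument. It also covers the case where the algorithm uses reducedness, for example if it chooses a direction of travel from the previous letter.

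\emph{Main obstacle.} I expect the hardest part to be the bookkeeping at tunnel boundaries. There, crossing a wall changes the tunnel index, and the position coordinate must be re-expressed relative to the new tunnel's origin; channel structure and parity dependence will enter here. I would first draw the hexagon of six alcoves around a vertex for each vertex type. I would verify the braid relation as going around that hexagon in both directions, so the braid check reduces to confirming that the coordinate updates agree on the opposite alcove of each hexagon.
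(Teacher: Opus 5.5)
Your central step assumes what has to be proved. Your inductive invariant treats Algorithm~\ref{algorithm} as a left-to-right procedure: its state after each prefix is the current alcove, updated letter by letter. You then assert that the wall-crossing updates ``should be precisely the update rules'' of the algorithm. The algorithm has no such rules, because its output is a global function of the word:
\begin{itemize}
\item the region $j$ is fixed by the parity of the initial maximal decreasing run and by whether anything follows that run;
\item this choice decides whether the parse into maximal runs starts with a decreasing run or an increasing one;
\item $u$ counts the letters in increasing runs;
\item there is an end-of-word correction when the last run is increasing of odd length;
\item only then is one of six piecewise formulas applied.
\end{itemize}
Appending one letter can re-parse the prefix and switch branch. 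The word $s_1s_0$ is parsed as $\gamma_1(1)\delta_0(1)$ with $j=1$ and gives $(0,2)$. The word $s_1s_0s_1$ is parsed as $\delta_1(2)\gamma_1(1)$ with $j=0$ and gives $(2,1)$.

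An induction on length is viable in principle, but its inductive step is the whole theorem. You must show that for every reduced word $ws_k$, the output on $ws_k$ is the neighbour across the $s_k$-panel of the alcove output for $w$. That requires two things:
\begin{itemize}
\item Lemma~\ref{type}, to read panel types off coordinates;
\item a case analysis over every way the new letter changes the parse: extending the last run, opening a new run, changing the parity of a terminal increasing run, or ending an initial decreasing run of even length, which changes $j$.
\end{itemize}
Your plan does none of this. The obstacle you single out, re-indexing at tunnel boundaries, is not where the difficulty lies. Translation invariance of the wall labelling makes only the geometric side of the check finite, not the algorithmic side.

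Your fallback does not close the gap either. Checking invariance under braid moves and invoking Matsumoto's theorem shows only that the output depends on $w$ and not on the chosen reduced word. It says nothing about whether that output equals the coordinates of $w$. The paper's proof has exactly this structure, and it supplies the ingredient you are missing. It combines braid invariance (Propositions~\ref{regions} and~\ref{regionslemma}) with two further facts:
\begin{itemize}
\item a direct geometric verification, using Lemma~\ref{triangles}, that the algorithm is correct on the reduced words $\gamma_0(\nu)\delta_{\nu+1}(\mu)$, $\gamma_1(\nu)\delta_{\nu+2}(\mu)$ and $\gamma_2(\nu)\delta_{\nu}(\mu)$ (Proposition~\ref{correctcoords});
\item the fact that these words, which are reduced by the root criterion, represent every non-identity element (Lemma~\ref{cover}).
\end{itemize}
The braid check itself is also not the local verification over pairs of generators and orientations that you describe. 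Its effect depends on where the braid sits relative to the run decomposition. At the start of the word, one must check that $j$ is unchanged. At the end of the word, $u$ can drop by one, and the correction for an odd terminal increasing run compensates.
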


The main geometric objects in our analysis are \ix{channels}, which are introduced in Section \ref{defchannels}. These are infinite sets of alcoves running parallel to one of the three root directions. We show that these channels organise the structure of shadows. We demonstrate that shadows in $\aw$ possess inherent symmetries, particularly across the hyperplanes defining the identity alcove. These symmetries significantly reduce the complexity of shadow calculations. 

The culmination of this work is a series of explicit formulas for the size of shadows. We prove that, for an alcove $(n, m)$, the number of elements in its shadow, denoted $S_{n,m}$, can be calculated directly from its coordinates. For example, when both $n$ and $m$ are even and $n,m \geq 2$, we establish that:
\[
S_{n,m} = \frac{12nm + 3n^2 + 3m^2 - 6n - 6m}{4}.
\]

Similar piecewise relations are derived for all parities of $n$ and $m$. Furthermore, we provide a criterion for shadow membership based on a simple linear function $k(n, m, c)$, which determines the number of elements in the shadow within the $(\alpha+\beta)$-channel containing $(0,c)$. We can then conclude the following theorem.

\begin{theorem}
The alcove $(i,j)$ is in the shadow of $(n,m)$ if and only if 

    \[2-k(n,m,({i+2j})/{2})\leq i \leq k(n,m,({i+2j})/{2}).\]
    This allows us to determine whether any two reduced words $x,y$ satisfy $x\leq y$ in $O(\ell(y))$ time. 
\end{theorem}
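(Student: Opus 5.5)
This theorem sits in the introduction, so the plan uses the results the paper develops later: the tunnel/channel coordinate system, the Words-to-Coordinates Algorithm (with the preceding theorem that it is well defined), and the description of the shadow $\Sh{(n,m)}$ channel by channel.

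The proof has two parts: the geometric membership criterion, and then the complexity claim. For the criterion, I would first fix how a coordinate pair $(i,j)$ sits relative to the $(\alpha+\beta)$-channels. Alcoves with coordinates $(i,j)$ lie in the channel through $(0,c)$ with $c=(i+2j)/2$. Along that channel the first coordinate $i$ runs linearly, in steps of one, from one side of the identity to the other. The quantity $i+2j$ is constant on a channel, so membership in a channel is a single linear condition. I would check this directly from the tunnel definition (Definition \ref{tunnelsdef}) and the definition of channels in Section \ref{defchannels}.

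Next I would show that the intersection of $\Sh{(n,m)}$ with a fixed $(\alpha+\beta)$-channel is a contiguous block of alcoves, symmetric about the hyperplane through the identity alcove. Here I would use the reflection symmetries of shadows established earlier. Convexity along the channel comes from the folding description: a folded gallery reaching an alcove in the channel can be refolded or unfolded one step along the channel while staying inside the shadow. Given a symmetric interval with $k(n,m,c)$ elements, the symmetry centre sits between $i=0$ and $i=1$ in these coordinates. So the admissible $i$ are exactly those with $2-k\le i\le k$, provided $k$ has been normalised as in its definition; I would verify this normalisation against small cases such as $(n,m)=(1,0),(2,0)$.

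The main obstacle is proving that $k(n,m,c)$ is the correct count, that is, a linear function of $c$ for each parity regime. My approach would be to compute the extremal alcove of the shadow in each channel, the fold of the minimal gallery that pushes farthest along the channel. This can be done by describing the boundary of $\Sh{(n,m)}$ as a polygon whose edges are unions of walls, as in the cardinality formulas. Summing the channel counts over $c$ must then reproduce $S_{n,m}$, which gives a consistency check on the per-channel formula.

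For the complexity claim, given reduced words $x,y$, I run the Words-to-Coordinates Algorithm on each. Each step is constant work, so this takes $O(\ell(y))$ time; if $\ell(x)>\ell(y)$ we answer no immediately. Since the shadow under the trivial orientation is the Bruhat interval $[1,y]$, $x\le y$ holds iff the coordinates of $x$ satisfy the inequality above. Evaluating $k$ and comparing takes constant time in arithmetic operations.
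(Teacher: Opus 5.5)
Your overall route is the paper's. You locate the $(\alpha+\beta)$-channel of $(i,j)$ through $c=(i+2j)/2$ (Proposition~\ref{alphaplusbeta}). You take the count $k(n,m,c)$ from the channel-count proposition, which the paper derives from the convex-hull description, Proposition~\ref{convex}. You position the block with Lemma~\ref{symmetry}, and you get the complexity bound from the linear-time algorithm together with $\textnormal{Sh}(y)=[1,y]$.

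Your explicit contiguity step is worthwhile. A count plus a symmetry does not by itself say which alcoves occur; the paper relies on convexity for this without saying so. Your folding idea can be made precise. Suppose $x$ is in the shadow and lies in tunnel $i\ge 2$. The $(\alpha+\beta)$-wall $H$ between tunnels $i-2$ and $i$ separates $x$ from the identity, so any folding of the minimal gallery ending at $x$ crosses $H$ at some panel. Folding there gives another folding of the minimal gallery, ending at $r_Hx$, which is the channel alcove in tunnel $i-2$. The same works upwards for $i\le -2$.

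The step that fails as written is the translation into the inequality. In even-coordinate notation $i$ is always even. Each $(\alpha+\beta)$-tunnel has a fixed even first coordinate and meets a given $(\alpha+\beta)$-channel in exactly one alcove (Proposition~\ref{once}), so consecutive alcoves of the channel differ by $2$ in $i$, not by $1$. The wall through the $s_0$-panel of the identity separates tunnel $0$ from tunnel $2$, since the alcove $s_0$ has even coordinates $(2,-1)$. So the reflection of Lemma~\ref{symmetry} acts on the channel by $i\mapsto 2-i$, with centre $i=1$, not between $0$ and $1$.

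With your indexing, a symmetric block of $k$ alcoves would be $1-k/2\le i\le k/2$, whereas $2-k\le i\le k$ contains $2k-1$ integers. Since $k$ is a count, no normalisation of $k$ reconciles the two, so you cannot reach the stated inequality this way. The missing ingredient is exactly the paper's remark that $i$ is even. The block is $\{2-k,4-k,\dots,k\}$, which has exactly $k$ elements; $k$ is even by definition, consistent with the fixed-point-free pairing $i\leftrightarrow 2-i$. This matches the paper's example: $k=8$ for the channel of $(0,4)$ in the shadow of $(5,7)$ gives the tunnels $-6,-4,\dots,8$.

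Two smaller points. First, recovering $S_{n,m}$ by summing is only a consistency check. The count itself must come from Proposition~\ref{convex}, whose inductive step uses $(\star)$ and Lemma~\ref{boundary} to add whole channel segments when an $s_i$-panel is crossed. Second, $k$ is only defined for $(n,m)$ in the $s_0$-region with $n\ge 2$, $m\ge 0$, written with nonnegative second coordinate. So the complexity claim for arbitrary $y$ also needs the diagram automorphism permuting $s_0,s_1,s_2$ (applied to both words), the involution between the two coordinate notations, and a direct treatment of the small cases. All of this is still linear time.
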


The paper is organised as follows. In Section \ref{background}, we recall the necessary background on the $\aw$ Coxeter complex, galleries, and the definition of shadows. In Section \ref{exploring}, we define the $(n, m)$-coordinate system. In Section \ref{algo}, we present and prove an algorithm that takes reduced words and outputs the corresponding coordinates. Then, in Section \ref{sect5}, we explore the geometry of $\aw$ with respect to this coordinate system, and establish the geometric properties of channels. Finally, in Section \ref{calculations}, we present our main results: the boundary characterisation of shadows as convex hulls and the closed-form descriptions of shadows in type $\aw$.

\section{Background}\label{background}
We begin by recalling the definitions and constructions related to the $\aw$ Coxeter complex. Our definitions and statements are taken from \cite{MASTERS}. See this paper for a detailed discussion of the relevant background material, including the link between shadows and Bruhat intervals. 

\subsection{The Coxeter complex}\label{roots}

We take the Coxeter group 
\[\aw=\langle s_0,s_1,s_2\mid s_i^2=1, (s_0s_1)^3=(s_0s_2)^3=(s_1s_2)^3=1\rangle,\]
and construct its \ix{Coxeter complex}. This is a simplicial complex whose maximal simplices are equilateral triangles and are in bijection with elements of the group. These are called \ix{alcoves}. If $x,y\in \aw$ are such that $x=ys_i$ for some $i\in\{0,1,2\}$, then their corresponding simplices are adjacent. The intersection of the simplices corresponding to $x$ and $y$ is a line segment, called a \ix{panel}, of type $i$. This creates a tiling of the plane by equilateral triangles. 

We recall that our Coxeter group acts naturally on this complex. Using the bijection between alcoves and group elements, we define the action of an element $w\in \aw$ on an alcove representing the element $x$ by sending this alcove to the alcove representing the element $wx$. 

\begin{remark}\label{typepreserved}
This map is an isometry, so in particular adjacency is preserved. It also preserves the type of adjacency, and therefore the types of the panels.
\end{remark}

Within this Coxeter complex, we have reflections and walls. These are in bijection with the hyperplanes of the complex. 

\begin{definition}\label{walls}
    An element $r\in \aw$ is called a \ix{reflection} if it is a conjugate of an element $s_i\in S$. The \ix{wall} $H_r$ of a reflection $r$ is the set of simplices in the Coxeter complex that are fixed by $r$ when $r$ acts on the complex by left multiplication. Then $H_r$ is a subcomplex of codimension 1.
\end{definition}

\begin{proposition}\textnormal{\cite[Proposition 2.6]{RON}}
    There is a bijection between the set of reflections of a Coxeter group, and the set of walls in the corresponding Coxeter complex.
\end{proposition}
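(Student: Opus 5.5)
We prove the bijection between reflections of $\aw$ and walls of its Coxeter complex by the standard construction: send each reflection $r$ to its wall $H_r$ from Definition \ref{walls}, and show that this map is well defined, surjective and injective. We work in the geometric realisation as a tiling of the Euclidean plane by equilateral triangles. There, each generator $s_i$ acts on alcoves by right-multiplication, as an adjacency of type $i$. Left multiplication, by contrast, is an isometry of the plane preserving panel types (Remark \ref{typepreserved}).

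\emph{Step 1 (well defined).} Let $w$ be the identity alcove and let $P$ be its panel of type $i$. The alcoves $1$ and $s_i$ share the panel $P$, and left multiplication by $s_i$ swaps them. An isometry of the tiling swapping two adjacent triangles while fixing their common panel is the Euclidean reflection in the line through $P$. Hence $H_{s_i}$ is exactly the hyperplane (line) containing $P$, which is a codimension-1 subcomplex.

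For a general reflection $r = w s_i w^{-1}$, note that left multiplication by $w$ is an isometry $\phi_w$ of the complex. From $r = \phi_w \circ s_i \circ \phi_w^{-1}$, the fixed set is $H_r = \phi_w(H_{s_i})$. So $H_r$ is the line through the type-$i$ panel of the alcove $w$, and in particular it is a wall.

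\emph{Step 2 (surjective).} Every hyperplane of the tiling contains some panel $P$. Choose an alcove $w$ adjacent to $P$, and let $i$ be the type of $P$. Then the hyperplane equals $H_{w s_i w^{-1}}$ by Step 1.

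\emph{Step 3 (injective).} Suppose $H_r = H_{r'}$. Then $r$ and $r'$ both fix this line pointwise, and both act as nontrivial isometries; $r \neq 1$ because it swaps $w$ and $w s_i$. A nontrivial isometry of the plane fixing a line pointwise is the reflection in that line, so $r$ and $r'$ induce the same permutation of alcoves. Since the action on alcoves is simply transitive (it is the regular action, via the bijection between alcoves and elements), $r = r'$.

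The main obstacle is Step 1: identifying the fixed set of $s_i$ precisely with a full hyperplane. This requires that the left action is realised by genuine Euclidean isometries of the tiling, not merely combinatorial automorphisms. We would take this as part of the standard realisation of $\aw$ as the affine reflection group generated by reflections in the sides of the base alcove, which is what the citation \cite[Proposition 2.6]{RON} supplies. In that realisation, the reflection $s_i$ is exactly the Euclidean reflection in the line through the corresponding side of the base alcove, so its fixed set is that full line.
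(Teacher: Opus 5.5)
Your argument is correct for $\aw$, which is the only case the paper needs since Definition~\ref{walls} is stated for $\aw$. It takes a different route from the source the paper relies on: the paper gives no proof, only the citation to Ronan, where the result is proved for arbitrary Coxeter groups. Two small points should be tightened.

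First, in Step~1, knowing that $s_i$ swaps the alcoves $1$ and $s_i$ does not by itself make it the reflection in the line through $P$. The half-turn about the midpoint of $P$ is also a symmetry of the tiling that swaps these two triangles. You need that $s_i$ fixes both endpoints of $P$. This holds because the action preserves vertex types as well as panel types, and the two endpoints of $P$ have different types.

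Second, the fact that left multiplication acts by Euclidean isometries should be taken from the realisation of $\aw$ as the affine Weyl group generated by the $s_{\alpha;k}$ (Section~\ref{roots}, Remark~\ref{typepreserved}). It should not be taken from \cite[Proposition 2.6]{RON}, which is the statement you are proving.

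The cited proof is purely combinatorial and shorter. Under Definition~\ref{walls}, walls are by definition the sets $H_r$, so surjectivity is automatic and only injectivity has content. If $r=ws_iw^{-1}$, then $r$ swaps $w$ and $ws_i$, so it fixes their common panel $P$ and $P\in H_r$. If $H_r=H_{r'}$, then $r'$ also fixes $P$ and hence permutes the two alcoves containing $P$. Since $r'\neq 1$ and $W$ acts freely on alcoves, $r'w=ws_i$, i.e.\ $r'=ws_iw^{-1}=r$.

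This works verbatim for every Coxeter group. Your Steps~1 and~3 instead rest on plane Euclidean geometry (an isometry fixing a line pointwise is the identity or the reflection in it). So they do not prove the proposition as literally worded for a general Coxeter group.

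What your route buys is the explicit identification of every wall with a line $H_{\gamma,k}$ of the tiling. That is the assertion ``these are in bijection with the hyperplanes of the complex'', which the paper makes without proof just before the proposition. The paper then uses it throughout: in tunnels, in channels, and in the symmetry of Lemma~\ref{symmetry}.
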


Each affine Coxeter complex has an underlying root system. For $\aw$, we consider the root system of type $A_2$:

\[E=\{(v_1,v_2,v_3)\in\mathbb{R}^3\mid v_1+v_2+v_3=0\},\]
\[\Phi = \{(1,-1,0),(-1,1,0),(1,0,-1),(-1,0,1),(0,1,-1),(0,-1,1)\}.\]

\begin{definition}
    Let $\langle\cdot,\cdot\rangle$ be the standard inner product on $E$. 
    For $\alpha\in E\backslash\{0\}$, define
    \[\alpha^\vee=\dfrac{2\alpha}{\langle\alpha,\alpha\rangle}.\]
    Define also the set
    \[H_\alpha = \{x\in E\mid \langle x,\alpha\rangle=0\}.\]
    The \ix{orthogonal reflection} in $H_\alpha$ is the map $s_\alpha :E\to E$ with
    \[s_\alpha(x)=x-\langle x,\alpha\rangle\alpha^\vee.\]
\end{definition}

We can now construct our finite and affine Weyl groups connected to this root system. These are precisely the Coxeter groups of types $A_2$ and $\aw$, respectively.
\begin{definition}
    The \ix{(finite) Weyl group} $A_2$ of the root system $\Phi$ is the group generated by the reflections $s_\alpha$ for $\alpha\in \Phi$. 
\end{definition}

We can now define our affine reflections. For each $k\in\mathbb{Z}$, we fix the reflection $s_{\alpha;k}:E\to E$ as
\[s_{\alpha;k}(x)=x-(\langle x,\alpha\rangle-k)\alpha^\vee.\]
For each root $\gamma$ in our root system, and for each $k\in\mathbb{Z}$, we have the hyperplane
\[H_{\gamma,k}=\{x\in E\mid \langle x, \gamma\rangle = k\}.\]

\begin{definition}
    The \ix{affine Weyl group} $\aw$ of the root system $\Phi$ is the group generated by the reflections $s_{\alpha;k}$ for $\alpha\in \Phi$, $k\in \mathbb{Z}$. 
\end{definition}

\subsection{Galleries}

We now introduce galleries in the Coxeter complex and recall the folding operations used to define shadows.

\begin{definition}\label{comb.gallery}
    Given the Coxeter complex of $\aw$, a \ix{combinatorial gallery} is a sequence
    \[\gamma = (c_0,p_1,c_1,p_2,\hdots  ,p_n,c_n),\]
    where the $c_i$ are alcoves and the $p_i$ are panels of $\aw$, such that $p_i$ is contained in $c_{i-1}$ and $c_{i}$ for all $i=1,\hdots ,n$. The length of $\gamma$ is $n$. The gallery $\gamma$ is \ix{minimal} if there does not exist a shorter gallery starting at $c_0$ and ending at $c_n$. 
\end{definition}

\begin{definition}\label{gallerytype}
    Let $\gamma=(c_0,p_1,c_1,p_2,\hdots  ,p_n,c_n)$ be a combinatorial gallery. The \ix{type} of $\gamma$ is the word $f=j_1\hdots j_n$ in $\{0,1,2\}$, where panel $p_i$ has type $j_i$. 
\end{definition}

\begin{definition}
    Consider a gallery $\gamma = (c_0,p_1,c_1,\hdots ,p_n,c_n)$. Let panel $p_i$ of $\gamma$ have type ${j_i}\in I$. We define its \ix{$(W,S)$-type} $\tau(\gamma)$ as the word 
    \[\tau(\gamma):=s_{j_1}\hdots s_{j_n}.\]
 
\end{definition}

\begin{definition}
    Given a combinatorial gallery $\gamma=(c_0,p_1,c_1,p_2,\hdots  ,p_n,c_n)$ of $\aw$, we say that $\gamma$ is \ix{folded} (or \ix{stammering}) if, within $\gamma$, we can find an index $i$ such that $c_i=c_{i-1}$. Then we say that $\gamma$ has a \ix{fold} at panel $p_i$. If there are no folds in $\gamma$, we say that $\gamma$ is \ix{unfolded} (or \ix{non-stammering}).  
\end{definition}

\begin{definition}
    Given a gallery $\gamma=(c_0,p_1,c_1,p_2,\hdots  ,p_n,c_n)$ of length $n$, define the \ix{fold set} $F(\gamma)$ to be the subset of $\{1,\hdots ,n\}$ such that $i\in$ $F(\gamma)$ if and only if $\gamma$ has a fold at panel $p_i$. 
\end{definition}

We can represent a gallery in the Coxeter complex by drawing a path starting at $c_0$ and passing through every alcove in the gallery. We represent the direction of the gallery with an arrow. 

\begin{definition}
The footprint $\mathrm{ft}(\gamma)$ of $\gamma$ is the gallery obtained by deleting all pairs $p_i,c_i$ such that $\gamma$ has a fold at $p_i$. 
\end{definition}

\begin{figure}[!htbp]
    \begin{center}
    \includegraphics[scale=0.2]{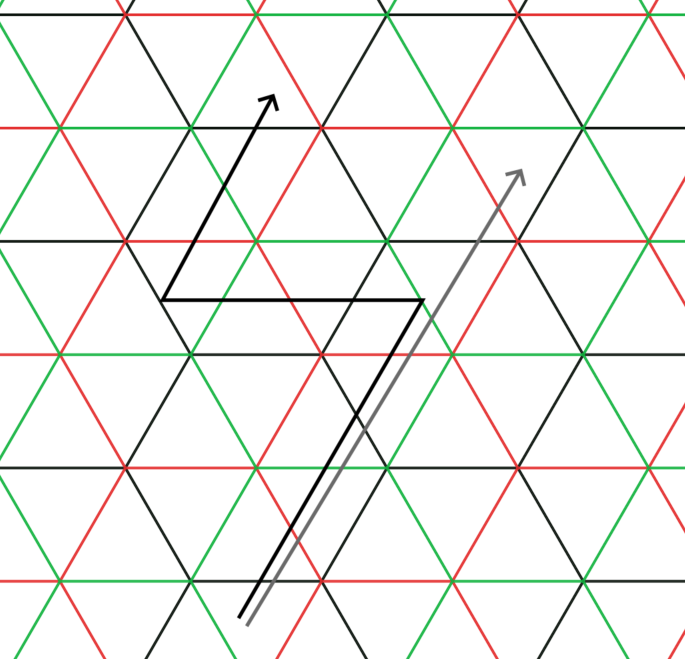}
    \end{center}
    \caption{Two galleries in $\aw$. The grey gallery is unfolded, and the black gallery is folded at two panels \cite{MASTERS}.}
\end{figure}

We now recall how to fold and unfold these galleries. Figure \ref{folds} provides an example.

\begin{figure}[!htbp]
    \begin{center}
    \includegraphics[scale=0.25]{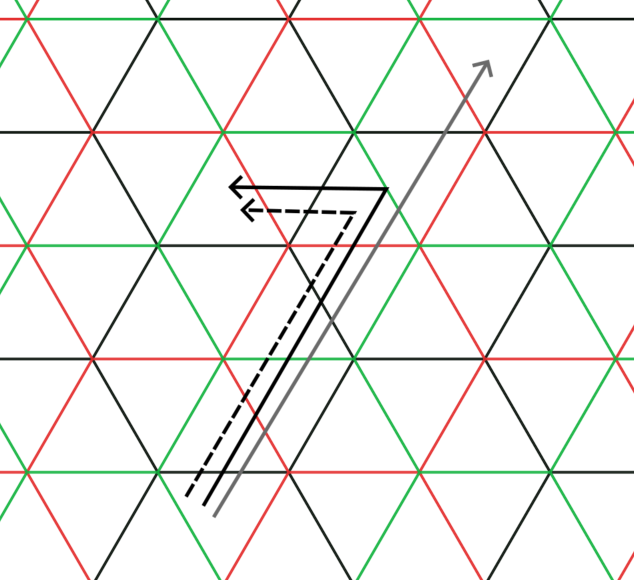}\\
    \end{center}
    \caption{Here we have a gallery shown in black. The grey line shows the corresponding unfolded gallery and the dashed line shows its footprint \cite{MASTERS}.}
    \label{folds}
\end{figure}

\begin{definition}
    Consider a gallery $\gamma = (c_0,p_1,c_1,\hdots ,p_n,c_n)$. Let $H_i$ be the wall containing the panel $p_i$, and let $r_i$ be the reflection across $H_i$. For $i=1,\hdots ,n$, let
    \[\gamma^i:=(c_0,p_1,\hdots ,p_i,r_ic_i,r_ip_{i+1},r_ic_{i+1},\hdots ,r_ip_n,r_ic_n).\]
    If $\gamma$ was folded at panel $p_i$, we call $\gamma^i$ an \ix{unfolding of }$\gamma$ at $p_i$. Otherwise, we call $\gamma^i$ a \ix{folding} of $\gamma$.
\end{definition}

\begin{lemma}\cite[Lemma 4.16]{SHA}
    For all $i=1,\hdots ,n$, $\tau(\gamma)=\tau(\gamma^i)$. So folding and unfolding does not change the gallery $(W,S)$-type. Also, $(\gamma^i)^i=\gamma$.
\end{lemma}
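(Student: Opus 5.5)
The plan is to prove both claims of the lemma directly from the definition of $\gamma^i$, using only Remark \ref{typepreserved}: the action of $\aw$ on the Coxeter complex is an isometry that preserves adjacency and the types of panels.

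\emph{Step 1: $\gamma^i$ is a combinatorial gallery.} Write $\gamma^i=(c_0',p_1',\hdots,p_n',c_n')$, where $c_j'=c_j$ and $p_j'=p_j$ for $j<i$, $p_i'=p_i$, and $c_j'=r_ic_j$ for $j\ge i$, $p_j'=r_ip_j$ for $j>i$. For $j<i$ the incidences are unchanged. For $j>i$, the incidences $p_j\subset c_{j-1},c_j$ are carried by $r_i$ to $r_ip_j\subset r_ic_{j-1},r_ic_j$. At the junction $j=i$ we need $p_i\subset c_{i-1}$, which is given, and $p_i\subset r_ic_i$. The second holds because $p_i$ lies in the wall $H_i$ fixed pointwise by $r_i$, so $p_i=r_ip_i\subset r_ic_i$.

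\emph{Step 2: the type is unchanged.} For $j\le i$ the panels are literally the same. For $j>i$, the panel $r_ip_j$ has the same type as $p_j$ by Remark \ref{typepreserved}. Hence every letter $s_{j_k}$ of the word agrees, and $\tau(\gamma)=\tau(\gamma^i)$.

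\emph{Step 3: $(\gamma^i)^i=\gamma$.} The $i$-th panel of $\gamma^i$ is still $p_i$, so the wall used in the second operation is again $H_i$ and the reflection is again $r_i$. Applying the construction a second time leaves the initial segment up to $p_i$ fixed and replaces each later entry $r_ix$ by $r_i^2x=x$, since $r_i$ is an involution. This recovers $\gamma$.

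The only delicate step is the junction incidence $p_i\subset r_ic_i$. It requires the geometric fact that a reflection fixes every simplex of its wall, which is part of Definition \ref{walls}, together with the fact that $H_i$ contains $p_i$ by the choice of $H_i$. It also requires that the reflection determined by $p_i$ is well defined and unique, which is guaranteed by the bijection between reflections and walls.
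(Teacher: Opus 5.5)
Your proof is correct and complete. The paper does not prove this lemma itself but quotes it from \cite[Lemma 4.16]{SHA}, and your direct verification from the definitions is the standard argument: $r_ip_i=p_i$ gives the junction incidence $p_i\subset r_ic_i$, type preservation under left multiplication (Remark~\ref{typepreserved}) gives $\tau(\gamma^i)=\tau(\gamma)$, and $r_i^2=1$, together with the fact that the $i$-th panel of $\gamma^i$ is still $p_i$, gives $(\gamma^i)^i=\gamma$.

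One small imprecision. The bijection between reflections and walls gives uniqueness of the reflection once the wall is fixed, but it does not show that $p_i$ lies in a unique wall. That follows instead from the stabiliser of a type-$j$ panel of $x$ being $\{1,xs_jx^{-1}\}$. The definition of $\gamma^i$ already presupposes ``the wall containing the panel $p_i$'', so nothing in your argument depends on this.
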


\begin{lemma}\cite[Lemma 4.17]{SHA}
    For all $i,j\in\{1,\hdots,n\}$, $(\gamma^i)^j=(\gamma^j)^i$, i.e.\ foldings commute.
\end{lemma}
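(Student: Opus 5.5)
The claim $(\gamma^i)^j=(\gamma^j)^i$ can be proved by writing both sides alcove by alcove and comparing them index by index. Without loss of generality take $i<j$; the case $i=j$ is trivial and $i>j$ is symmetric. We also need to be careful about which panels and walls are used in the second operation. In $(\gamma^i)^j$, the $j$th panel of $\gamma^i$ is $r_ip_j$, so the relevant reflection is the one across the wall containing $r_ip_j$. That wall is $r_iH_j$, and its reflection is the conjugate $r_i r_j r_i$. This holds because left multiplication is an isometry of the complex sending walls to walls (Remark \ref{typepreserved}), and reflections correspond bijectively to walls (Definition \ref{walls} and the cited Proposition).

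We compute $\gamma^j$ first. Its entries with index less than $j$ are those of $\gamma$, and its later entries are $r_jc_k$ and $r_jp_k$. We then fold at $i<j$. The $i$th panel of $\gamma^j$ is still $p_i$, so the reflection is $r_i$. This gives:
\begin{itemize}
\item $c_k$ for $k<i$;
\item $r_ic_k$ for $i\le k<j$;
\item $r_ir_jc_k$ for $k\ge j$.
\end{itemize}
Panels transform in the same way.

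We compute the other side in the same manner. $\gamma^i$ has entries $c_k$ for $k<i$ and $r_ic_k$ for $k\ge i$. Folding it at $j$ uses the reflection $r_ir_jr_i$ and acts on entries with index at least $j$, giving $(r_ir_jr_i)(r_ic_k)=r_ir_jc_k$ for $k\ge j$. The entries with $i\le k<j$ stay $r_ic_k$, and those with $k<i$ stay $c_k$.

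The two sequences agree entry by entry, so $(\gamma^i)^j=(\gamma^j)^i$. The only delicate point, and the one to justify carefully, is the conjugation step: the wall through the panel $r_ip_j$ has reflection $r_ir_jr_i$. This is the standard fact that for an isometry $w$ and a reflection $r$ with wall $H$, the reflection in $wH$ is $wrw^{-1}$. Here it follows because $r_ir_jr_i$ fixes $r_iH_j$ pointwise and is a conjugate of a simple reflection.
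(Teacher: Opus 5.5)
Your proof is correct. The one nontrivial step is that the $j$th panel of $\gamma^i$ is $r_ip_j$, which lies in the wall $r_iH_j$ of the reflection $r_ir_jr_i$, since the fixed set of $r_ir_jr_i$ is $r_i$ applied to the fixed set of $r_j$. With that, $(r_ir_jr_i)(r_ic_k)=r_ir_jc_k$ makes the entrywise comparison go through.

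The paper gives no proof of its own here; it simply cites \cite[Lemma 4.17]{SHA}. Your direct computation with the conjugated reflection is the standard argument for this fact.
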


\begin{corollary}
    Given any gallery $\gamma$, there is a subset $J\subset \{1,\hdots ,n\}$ such that $\gamma^J$ is unfolded, and $\gamma$ and $\gamma^J$ have the same $(W,S)$-type.
\end{corollary}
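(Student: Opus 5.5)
The corollary should follow from the two lemmas just quoted: the operation $\gamma\mapsto\gamma^i$ is an involution, and any two such operations commute. I will simply unfold $\gamma$ at every panel in its fold set. Let $\gamma=(c_0,p_1,c_1,\hdots,p_n,c_n)$ and take $J=F(\gamma)=\{i_1<\dots<i_k\}$. Define
\[\gamma^J:=(\cdots((\gamma^{i_1})^{i_2})\cdots)^{i_k}.\]
By \cite[Lemma 4.17]{SHA} this is independent of the order in which the operations are applied. By \cite[Lemma 4.16]{SHA}, applied $k$ times, $\tau(\gamma^J)=\tau(\gamma)$. That settles the type assertion, so it remains to show that $\gamma^J$ has no folds.

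The key local observation is that applying the operation at panel $p_i$ changes only the alcoves $c_i,\dots,c_n$ and the panels $p_{i+1},\dots,p_n$, replacing each by its image under the reflection $r_i$. Reflections are isometries, so equality of consecutive alcoves is preserved for every index $j>i$. Indices $j<i$ are untouched. At index $i$ itself, the pair $(c_{i-1},c_i)$ becomes $(c_{i-1},r_ic_i)$. Since $c_i$ and $c_{i-1}$ both contain $p_i\subset H_i$, either $c_i=c_{i-1}$ or $c_i=r_ic_{i-1}$. In the first case the new pair is $(c_{i-1},r_ic_{i-1})$, which is unequal because $r_i$ fixes no alcove. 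In the second case the new pair is $(c_{i-1},c_{i-1})$, which is equal. So the operation at $i$ toggles whether $i$ is a fold and leaves the fold status of every other index unchanged. In particular, $F(\gamma^i)=F(\gamma)\,\triangle\,\{i\}$.

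It follows by induction on $k$ that $F(\gamma^J)=F(\gamma)\,\triangle\,J=\emptyset$, so $\gamma^J$ is unfolded. The only point needing care is the claim that a reflected panel $r_ip_j$ lies in the wall of the reflection $r_ir_jr_i$. Once that is granted, the statement ``$c_j=c_{j-1}$ if and only if $r_ic_j=r_ic_{j-1}$'' is exactly the fold condition in the new gallery. This holds because $r_i$ is a bijection on alcoves and maps panels to panels, preserving type by Remark \ref{typepreserved}, so no real obstacle arises.
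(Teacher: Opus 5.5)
Your proposal is correct and follows the argument the paper intends; the paper gives no proof beyond citing \cite[Lemmas 4.16 and 4.17]{SHA}, and your choice $J=F(\gamma)$ together with the identity $F(\gamma^i)=F(\gamma)\,\triangle\,\{i\}$ fills in the step it leaves implicit. The closing remark about $r_ip_j$ lying in the wall of $r_ir_jr_i$ is not needed: the fold condition at $j>i$ in $\gamma^i$ is literally $r_ic_j=r_ic_{j-1}$, and the toggle identity is simply reapplied to each intermediate gallery.
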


Now we fix a bijection between $W$ and the alcoves of our Coxeter complex of $\aw$. We denote the alcove associated to the identity element by 1.

\begin{definition}
    Let $w,x\in W$. Let $\gamma$ be any minimal gallery starting in 1 and ending in $w$. We write $w\rightharpoonup x$ if there is a folding $\gamma^I$ of $\gamma$ such that the end alcove of $\gamma^I$ is $x$.
\end{definition}

\begin{remark}
    This definition does not depend on the choice of minimal gallery. This is because the trivial orientation, where all folds are allowed, is braid invariant. See \cite[Chapter 5]{SHA} for details. 
\end{remark}

Given an element $w\in W$, we take any gallery $\gamma$ from 1 to $w$. We then define the shadow of an element as the set of all endpoints of foldings of $\gamma$. 
\begin{definition}
    Let $w\in \aw$. The \ix{(trivial) shadow} of $w$ is the set 
    \[\textnormal{Sh}(w):=\{x\in W\mid w\rightharpoonup x\}.\]
\end{definition}

\section{The coordinate system}\label{exploring}

Now we define new concepts for the $\aw$ Coxeter complex. We tackle this problem from a few different perspectives. In this section, we place coordinates on our Coxeter complex. In later sections, we show that, given a reduced word in the generators, there is an algorithm to output the corresponding coordinates of the respective alcove. We show that this new notation distinguishes the type of each alcove, i.e.\ the arrangement of panels in the alcove. Lastly, we define channels in the complex, and prove some foundational results about these channels. We use type to refer to both the standard type of a gallery, and the $(W,S)$-type. It will be clear from context which we mean, as the type is always a word in the indexing set, whilst the $(W,S)$-type is always a word in the generators of the group.

Recall that the presentation of $\aw$ is 
\[\aw=\langle s_0,s_1,s_2\mid s_i^2=1, (s_0s_1)^3=(s_0s_2)^3=(s_1s_2)^3=1\rangle,\]
with corresponding Coxeter complex the tiling of $\mathbb{R}^2$ by equilateral triangles. In our pictures of this complex, we represent the type of a panel by a colour. Here, we show $s_0$-type panels in black, $s_1$-type panels in red, and $s_2$-type panels in green. 

We let $\alpha$ and $\beta$ be a choice of simple roots corresponding to the reflections $s_2$ and $s_1$ respectively in our Coxeter complex. This fixes three directions, $\alpha, \beta$ and $\alpha+\beta$, as in Figure \ref{fig1}.

\begin{figure}[!htbp]
    \centering
    \def\svgwidth{0.4\textwidth}
    
    \resizebox{0.3\linewidth}{!}{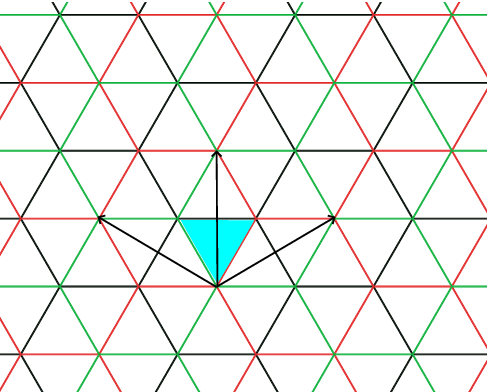}
    \caption{The directions of the roots.}
    \label{fig1}
\end{figure}

We use these roots to define tunnels and channels in our complex, by using them to distinguish the sets of parallel hyperplanes.

We make the following definition to help us explain the coordinate system and results.
\begin{definition}
An alcove $w$ is \ix{downwards facing} if $\ell(w)\equiv 0 \pmod 2$, and is otherwise \ix{upwards facing}. 

\end{definition}

\begin{remark}
    We fix the orientation of our pictures such that the identity alcove is downwards facing. Therefore, an upwards facing alcove is an alcove with a point towards the top of the page. Similarly, a downwards facing alcove is an alcove with a point towards the bottom of the page. See Figure \ref{upanddown}.

\begin{figure}[!htbp]
    \begin{center}
    \includegraphics[scale=0.07]{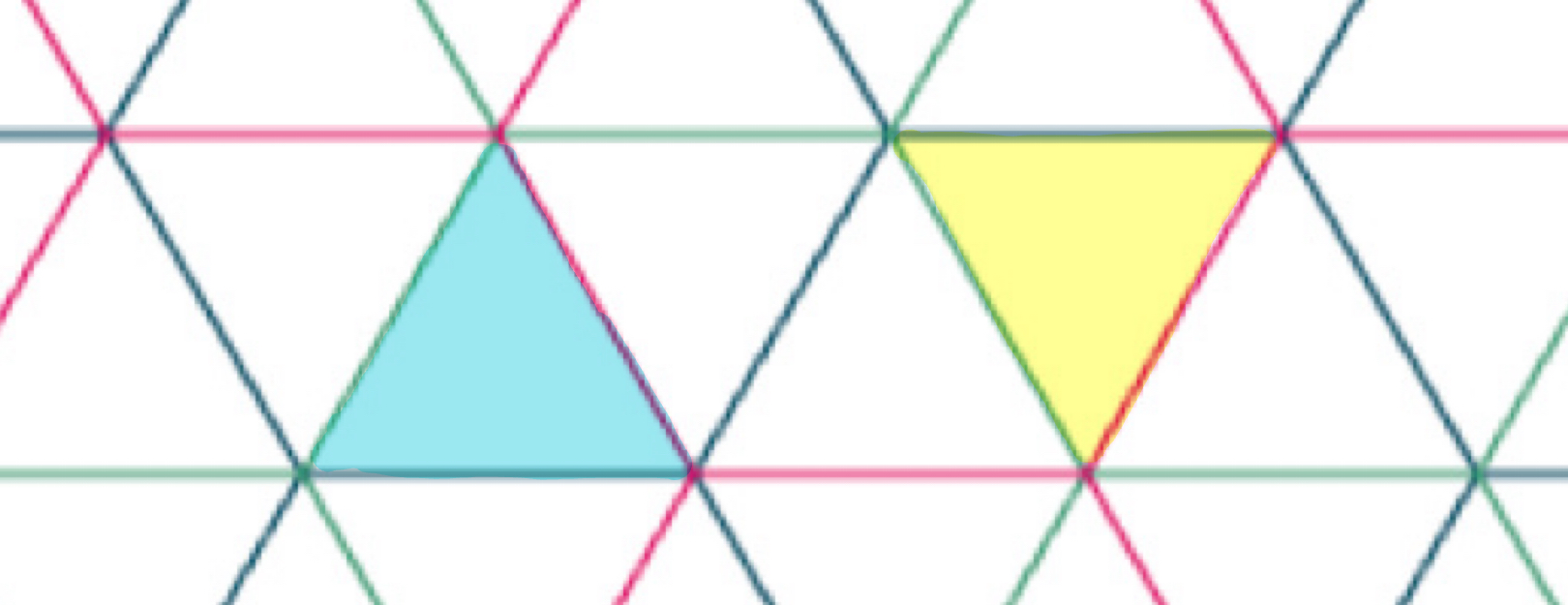}
    \end{center}
    \caption{An upwards facing alcove in blue, and a downwards facing alcove in yellow.}
    \label{upanddown}
\end{figure}

We also arbitrarily fix the $s_1$-type panel of the identity to be on the right, and the $s_2$-type panel to be on the left. 
\end{remark}

\subsection{Coordinate system}

We first define our coordinate system on $\aw$.
We describe our coordinate system by travelling through sets of alcoves called tunnels. The aim is to replace geometric positions of alcoves by pairs of integers that can be computed directly from reduced words. These objects sometimes arise in the literature as $J$-alcoves, for some $J\subset I$. See for instance \cite{GUILHOT}. 
\begin{definition}\label{tunnelsdef}
    Fix a root $\gamma$. A $\gamma$-\ix{tunnel} in the $\aw$ Coxeter complex is the set of alcoves contained between two adjacent parallel hyperplanes $H_{\gamma,n}$ and $H_{\gamma,n+1}$ that are perpendicular to $\gamma$.
\end{definition}

We place coordinates on the $\aw$ Coxeter complex. We need two variables to define the whole set of alcoves. We first give an intuitive geometric description of the coordinates. Later, we will see the algebraic description of these coordinates.

Firstly, we assign the identity alcove the coordinates $(0,0)$. Now fix $(n,m)$ with $n$ even. Now we describe which alcove this corresponds to. The first coordinate $n$ tells us how many alcoves to travel in the $\beta$-tunnel containing the identity, starting from the identity alcove
with positive moving up the page and negative moving down the page, as shown in Figure \ref{fig4}.
\begin{figure}[!htbp]
    \centering
    \def\svgwidth{0.4\textwidth}
    
    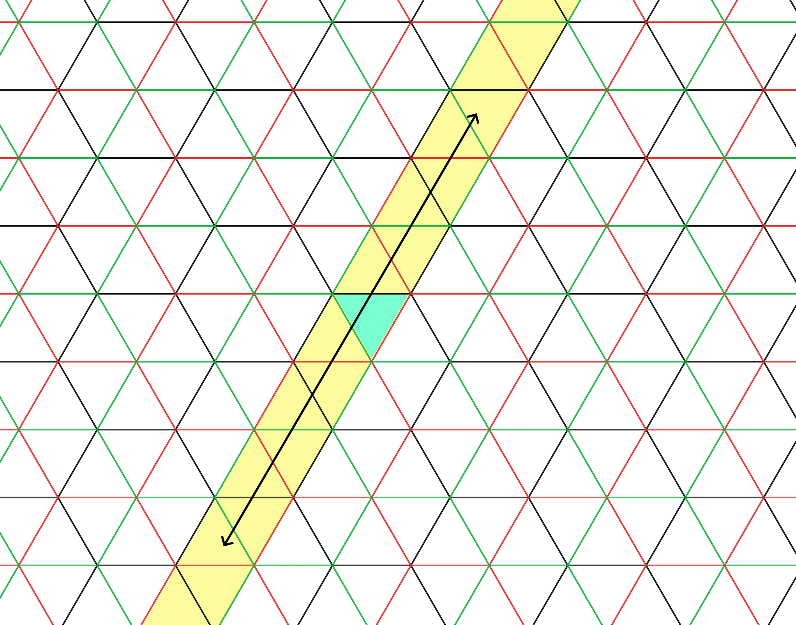
    \caption{The direction of the first coordinate.}
    \label{fig4}
\end{figure}

We always move an even number of alcoves, so we always end up in a downwards alcove, the same direction as the identity alcove.

The first coordinate $n$ tells us which $(\alpha+\beta)$-tunnel we are in. So the $(\alpha+\beta)$-tunnels are distinguished by the first coordinate.

Then we move in the $(\alpha+\beta)$-tunnels. If $m$ is positive, we move $m$ steps to the right in this tunnel. If $m$ is negative, we move $-m$ steps to the left in this tunnel.

\begin{figure}[!htbp]
    \centering
    \def\svgwidth{0.4\textwidth}
    
\resizebox{0.4\linewidth}{!}{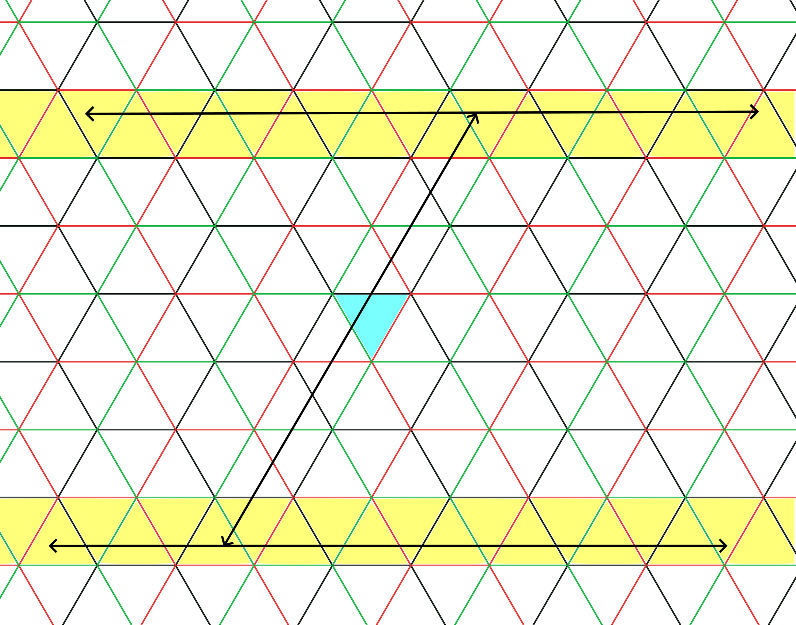}
    \caption{The direction of the second coordinate.}
    \label{fig5}
\end{figure}

For each set of coordinates $(n,m)$, we have created a gallery in our complex. We note that this is not always a minimal gallery. But its $(W,S)$-type is a word in our generators. 

To simplify the notation, let us introduce new notation to represent specific strings of generating elements. We will see later that these strings of elements play a key role in defining an algorithm for the coordinates. In the following definition, all subscripts are taken modulo 3. 

\begin{definition}
    We call a word in the generators \ix{increasing} if it is of the form \[s_is_{i+1}s_{i+2}s_is_{i+1}s_{i+2}\hdots,\] starting at some index $i$. Similarly, we call a word in the generators \ix{decreasing} if it is of the form $s_is_{i+2}s_{i+1}s_is_{i+2}s_{i+1}\hdots$ for some index $i$.
\end{definition}

\begin{remark}
    It is important to note here that we are increasing (or decreasing) the subscripts modulo 3, with no elements skipped. So $s_2s_0s_1s_2s_0$ is an increasing subword, but $s_0s_2s_0s_2$ is neither increasing nor decreasing.
\end{remark}

\begin{definition}
    Define $\gamma_i(n)$ to be the increasing word starting with the element $s_i$, of length $n$. Similarly, define $\delta_j(m)$ to be the decreasing word starting with the element $s_j$, of length $m$.
\end{definition}

\begin{remark}
    These increasing and decreasing sequences also appear in the definition of the normal form of an element. See \cite{HECKECATEGORY}.
\end{remark}
We prove the following two lemmas, which show that moving within any $(\alpha+\beta)$-tunnel has a specific type of gallery. 
\begin{lemma}\label{tunnels1}
    Let $w\in \aw$ and $i\neq j$. Then $w$ and $ws_is_js_i$ do not lie in the same tunnel. 
\end{lemma}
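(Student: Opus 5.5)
We interpret the statement as: for every root direction $\gamma$, the alcoves $w$ and $ws_is_js_i$ never lie in the same $\gamma$-tunnel. By the braid relation $(s_is_j)^3=1$, $ws_is_js_i = ws_js_is_j$ is the alcove opposite to $w$ across the vertex of $w$ of cotype $\{i,j\}$. The plan is to turn this into a statement about hyperplanes through that vertex and then read off the tunnel condition.

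First, we reduce to $w=1$. By Remark~\ref{typepreserved}, left multiplication by $w$ is a type-preserving isometry of the complex. It maps the alcoves $1$ and $s_is_js_i$ to $w$ and $ws_is_js_i$, and it maps the set of walls to itself. It also preserves parallelism classes of hyperplanes, since it is an affine isometry whose linear part lies in the finite Weyl group. Hence it sends tunnels to tunnels. It permutes the three directions $\alpha,\beta,\alpha+\beta$, but we claim the result for every direction, so this permutation is harmless.

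Second, we treat the case $w=1$ geometrically. The gallery $1, s_i, s_is_j, s_is_js_i$ crosses the panels of types $i,j,i$. All three panels contain the common vertex $v$ of the identity alcove that lies on both its $i$-panel and its $j$-panel. So the gallery winds halfway around $v$: six alcoves meet at $v$, and the word $s_is_js_i$ has length $3$, which is reduced since $m_{ij}=3$. Therefore $s_is_js_i$ is the alcove opposite $1$ at $v$, namely its image under the point reflection $x\mapsto 2v-x$.

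Third, we check every direction $\gamma$. Exactly one hyperplane $H_{\gamma,k}$ passes through the vertex $v$, because $v$ is a vertex of the lattice of hyperplanes. An alcove $A$ having $v$ as a vertex lies on one side of this hyperplane, and its point reflection $2v-A$ lies strictly on the other side. So $1$ and $s_is_js_i$ are separated by $H_{\gamma,k}$. It follows that they lie in different $\gamma$-tunnels: one tunnel lies between $H_{\gamma,k-1}$ and $H_{\gamma,k}$, the other between $H_{\gamma,k}$ and $H_{\gamma,k+1}$. Transporting this back by $w$ proves the lemma.

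The main obstacle is the second step, that is, justifying that $s_is_js_i$ is exactly the alcove opposite $1$ through the shared vertex, using only the combinatorial description. I would argue it directly: the consecutive panels in the gallery share the vertex $v$, and the gallery is minimal of length $3$, which is half the link of a vertex in the $\aw$ tiling. Alternatively, one can note that $s_is_js_i$ is the reflection in the wall through $v$ orthogonal to the remaining direction. The point reflection through $v$ is that reflection composed with the reflection in the wall through $v$ in that direction. Either description yields the separation.
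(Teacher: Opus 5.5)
Your proof is correct and is essentially the paper's argument: you reduce to $w=1$ because the left action of $W$ sends tunnels to tunnels, note that $s_is_js_i$ is the alcove opposite $1$ at their common vertex $v$ (the longest element of $W_{\{i,j\}}$) so every wall through $v$ separates them, and use that in $\tilde{A}_2$ every vertex lies on a wall of each direction. The only slip is in your closing aside: the line through $v$ perpendicular to the wall of the reflection $s_is_js_i$ is not a wall, since no two roots of $A_2$ are orthogonal and the point reflection is not in $W$; your main argument uses the point reflection only as a Euclidean symmetry of the tiling, so nothing is affected.
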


\begin{proof}
    Suppose they do. Then 1 and $s_is_js_i$ lie in the same tunnel, as every tunnel is mapped to a tunnel under the action of any element $x\in W$ on the complex. Say they are bounded by parallel adjacent hyperplanes $H_1$ and $H_2$. But now $s_is_js_i$ is the longest element in the parabolic subgroup $W_{\{i,j\}}$, which is the group generated by the elements $s_i$ and $s_j$. So every hyperplane passing through the shared point of 1 and $s_is_js_i$ must separate 1 and $s_is_js_i$. But, in $\aw$, each vertex has every direction of walls passing through it. So there is a hyperplane parallel to $H_1$ and $H_2$ separating 1 and $s_is_js_i$, contradicting the adjacency of $H_1$ and $H_2$. 
 \end{proof}

\begin{lemma}\label{tunnels}
    Any minimal gallery moving only within an $(\alpha+\beta)$-tunnel to the right has type 
    $\delta_i(n)$.
    Any minimal gallery moving only within an $(\alpha+\beta)$-tunnel to the left has type 
    $\gamma_i(n)$.
\end{lemma}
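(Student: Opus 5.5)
We prove the lemma by looking at the local structure of a gallery confined to a tunnel, and then fixing the direction of the cyclic order of panel types using the picture of the identity alcove.

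First, a minimal gallery staying inside a single $(\alpha+\beta)$-tunnel never crosses a wall parallel to the tunnel. Every alcove in the tunnel has exactly one panel on one of the two bounding hyperplanes, and two panels that lead to neighbouring alcoves in the tunnel. A minimal gallery cannot return to the alcove it just left, since that would create a stammer or backtrack. So once the direction (right or left) is fixed, the gallery is uniquely determined by its start alcove: at each step it exits through the unique interior panel other than the one it entered by.

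Next we show that consecutive letters of the type are distinct and never repeat with period two. Consecutive letters are distinct because two successive panels of a non-stammering gallery bounding the same alcove are distinct panels of that alcove. Now suppose the type contained a pattern $s_is_js_i$ with $i\neq j$. Then some alcove $w$ in the gallery would lie in the same tunnel as $ws_is_js_i$, contradicting Lemma \ref{tunnels1}. Over the alphabet $\{0,1,2\}$, a word with no letter equal to its predecessor and none equal to the letter two places back must satisfy $j_{t+2}\notin\{j_t,j_{t+1}\}$. Hence $j_{t+2}$ is the third index, so the word is either $\gamma_i(n)$ or $\delta_i(n)$ for some $i$. Moreover, whether it is increasing or decreasing is fixed by its first two letters.

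Finally we determine which of the two occurs for each direction; we expect this to be the main obstacle, since it depends on the chosen orientation conventions. The increasing/decreasing property is invariant under the left action of $\aw$, because by Remark \ref{typepreserved} the action preserves panel types. It is also invariant along the gallery, since it is a local cyclic-order condition, and the left action maps tunnels to tunnels. One might then hope to reduce to a single check at the identity alcove. That reduction is not quite clean: the orientation-reversing elements of $\aw$ could swap left with right. So instead we argue directly. In any alcove, the three panel types $0,1,2$ appear in a cyclic order around the triangle. Adjacent alcoves are mirror images across their shared panel, so upward and downward alcoves have opposite cyclic orders.

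Moving right in a horizontal tunnel alternates between upward and downward alcoves. At a downward alcove we enter through its left slanted panel and exit through its right one; at an upward alcove the roles are mirrored. Combining this alternation with the fixed convention that the identity has its $s_1$-panel on the right and its $s_2$-panel on the left, a direct check at $1$ shows that moving right reads $\ldots s_2s_1s_0\ldots$, which is decreasing. By the cyclic-order consistency this persists along the whole tunnel and in every $(\alpha+\beta)$-tunnel. Moving left reverses the word, which turns a decreasing word into an increasing one, giving $\gamma_i(n)$.
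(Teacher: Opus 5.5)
Your proposal is correct. Its middle step is exactly the paper's inductive step. The paper proves the lemma for galleries starting at the identity by induction: the next letter after $\delta_1(k)=\delta_1(k-2)s_{-k}s_{2-k}$ cannot be $s_{2-k}$ (minimality) or $s_{-k}$ (Lemma~\ref{tunnels1}), so it must be $s_{1-k}$.

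Where you differ is in how the direction is fixed and spread to all galleries. The paper does three things:
\begin{enumerate}
\item It checks the first two alcoves on each side of the identity, which fixes ``decreasing to the right''.
\item It handles an arbitrary gallery in the identity's tunnel by viewing it as a subgallery, reversal or concatenation of galleries starting at the identity.
\item It moves the result to every other $(\alpha+\beta)$-tunnel by reflections, which preserve panel types.
\end{enumerate}
You instead apply the exclusion argument at every position of any tunnel gallery. You then fix the direction with a global fact: all downward alcoves carry the identity's cyclic order of panel types, and all upward alcoves carry the opposite order.

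This avoids both the subgallery bookkeeping and the transport step. In fact your orientation argument proves the whole lemma by itself. In every alcove of every $(\alpha+\beta)$-tunnel, entering through the left slanted panel and leaving through the right one lowers the index by $1$ modulo $3$, so Lemma~\ref{tunnels1} is not needed on your route. The paper's route, in exchange, needs no orientation bookkeeping beyond the base case.

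Your worry that reducing to the identity's tunnel is ``not clean'' is unfounded. The reflections that carry that tunnel to the others are those in the walls $H_{\alpha+\beta,k}$. These fix the horizontal direction, so they reverse orientation without swapping left and right.
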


\begin{proof}
        We prove that this is true for the $(\alpha+\beta)$-tunnel containing the origin. Then every other $(\alpha+\beta)$-tunnel is a combination of reflections of the origin $(\alpha+\beta)$-tunnel. So, as the panel types are preserved under reflections in the hyperplanes, the statement must be true for all $(\alpha+\beta)$-tunnels.

    For the $(\alpha+\beta)$-tunnel containing the origin, we proceed by induction, proving it to be true for any minimal gallery starting at the origin. Then it must be true for any minimal gallery in the tunnel, as the gallery must be a subset of a minimal gallery from the origin, a reversal of a minimal gallery from the origin, or a combination of the two. We first note that the statement is true when we step into the first two alcoves in either direction. 

    Now assume it is true for a minimal gallery of length $k>1$ moving in the right direction. Assume we have a minimal gallery starting in the identity alcove of length $k+1$. By induction, our gallery has type $\delta_1(k)s_i=\delta_1(k-2)s_{-k}s_{2-k}s_i$. Note that, as the gallery is minimal, $i\neq 2-k$. Now, by Lemma \ref{tunnels1}, if $i=-k$, then $\delta_1(k-2)$ and $\delta_1(k)s_i$ are not in the same tunnel. So $i\neq -k$, and hence $i=1-k$. Therefore, $\delta_1(k)s_i=\delta_1(k+1)$.

     A very similar argument holds for a minimal gallery in the left direction.
\end{proof}
This can then be applied to the other two types of panels.

\begin{lemma}\label{tunnelsbeta}
    Any gallery moving only within a $\beta$-tunnel to the right and up has type 
    $\gamma_i(n)$.
    Any gallery moving only within a $\beta$-tunnel to the left and down has type 
    $\delta_i(n)$.
\end{lemma}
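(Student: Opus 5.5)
The plan is to mirror the proof of Lemma \ref{tunnels} for $\beta$-tunnels. I will first treat the $\beta$-tunnel containing the identity alcove, and then transport the result to every other $\beta$-tunnel. The transport step uses the fact that the action of $\aw$ carries tunnels to tunnels and preserves panel types (Remark \ref{typepreserved}). Every $\beta$-tunnel is the image of the one through $1$ under a product of reflections in hyperplanes. Such reflections may reverse the sense of ``right and up'' versus ``left and down'', so I will check that the two statements are swapped consistently. This holds because reflecting an increasing word pattern in a wall parallel to the tunnel keeps the panel types but reverses orientation only when the tunnel direction itself is reversed. Alternatively, and more simply, I will use translations, which preserve direction and also preserve types, since translations lie in the group.

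For the identity $\beta$-tunnel, I proceed by induction on the length of a minimal gallery starting at $1$. As in Lemma \ref{tunnels}, an arbitrary minimal gallery inside the tunnel is a subgallery, a reversal, or a concatenation of such galleries from the origin. Reversal turns an increasing word into a decreasing one and swaps the two directions, so handling galleries from the origin suffices. For the base case, the identity's $s_1$-panel is on the right and its $s_2$-panel on the left, with $\beta$ corresponding to $s_1$. The first two steps up and to the right within the $\beta$-tunnel must cross panels of types $i$ and then $i+1$, which I read off from the picture. The same reading gives the first two steps down and left as $i$ then $i-1$, consistent with $\gamma_i$ and $\delta_i$ respectively.

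For the inductive step, suppose a minimal gallery of length $k+1$ moving up-right in the tunnel has type $\gamma_i(k)s_j$, with $\gamma_i(k)=\gamma_i(k-2)s_as_{a+1}$. Minimality forces $j\neq a+1$. If $j=a$, the last three letters are $s_as_{a+1}s_a$, and Lemma \ref{tunnels1} shows that the alcove reached by $\gamma_i(k-2)$ and the endpoint lie in different tunnels. This contradicts the gallery staying within the tunnel. Hence $j=a+2$ and the type is $\gamma_i(k+1)$. The down-left case is identical with indices decreasing.

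The main obstacle is the base case and orientation bookkeeping. I must correctly identify, from the chosen orientation of the identity alcove and of $\beta$, which direction in the $\beta$-tunnel yields increasing rather than decreasing subscripts. I must also confirm that the direction-to-word correspondence is swapped relative to Lemma \ref{tunnels}. I would settle this by explicitly listing the panels crossed in the first two steps from $1$ in each direction.
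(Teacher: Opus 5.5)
Your proposal is correct, but it argues differently from the paper. The paper does not rerun any induction. It notes that reflecting in an $\alpha$-hyperplane carries a $\beta$-tunnel onto an $(\alpha+\beta)$-tunnel. Since this reflection is left multiplication by a group element, it preserves panel types, so the result is read off from Lemma \ref{tunnels}.

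The orientation bookkeeping you flagged as the main obstacle is hidden in that one line. An $\alpha$-hyperplane runs in the $120^\circ$ direction, so the reflection sends the up-and-right direction ($60^\circ$) of the $\beta$-tunnel to the leftward direction ($180^\circ$) of the $(\alpha+\beta)$-tunnel. That is exactly why up-and-right yields $\gamma_i(n)$ and not $\delta_i(n)$.

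Your route instead repeats the induction of Lemma \ref{tunnels} inside the $\beta$-tunnel through $1$. This costs some duplication, but it buys two things:
\begin{itemize}
\item It makes explicit that the inductive step, via Lemma \ref{tunnels1}, does not depend on the direction of the tunnel.
\item It isolates all orientation questions in a checkable base case. From $1$, moving up and right crosses $s_0$ then $s_1$, and moving down and left crosses $s_2$ then $s_1$. This agrees with the words $\gamma_0(n)$ and $\delta_2(n)$ used to define the coordinates.
\end{itemize}

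One simplification: your concern that reflections might reverse the direction along the tunnel is unnecessary. Reflections in $\beta$-hyperplanes are parallel to the tunnel, so they fix its direction. They already carry the tunnel through $1$ to every other $\beta$-tunnel, so the detour through translations is not needed.
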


\begin{proof}
    Reflecting a $\beta$-tunnel along any $\alpha$-hyperplane sends the $\beta$-tunnel to an $(\alpha+\beta)$-tunnel. The result follows from Lemma \ref{tunnels}.
\end{proof}

\begin{lemma}\label{tunnelsalpha}
    Any gallery moving only within an $\alpha$-tunnel to the right and down has type 
    $\gamma_i(n)$.
    Any gallery moving only within an $\alpha$-tunnel to the left and up has type 
    $\delta_i(n)$.
\end{lemma}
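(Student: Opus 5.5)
The plan is to mirror the proof of Lemma \ref{tunnelsbeta}: transport the statement from $(\alpha+\beta)$-tunnels to $\alpha$-tunnels by a reflection, and read off the effect on the gallery type from Lemma \ref{tunnels}. We use a reflection in a $\beta$-hyperplane, or equivalently a reflection of $\aw$ acting by left multiplication that fixes a wall perpendicular to $\beta$. Such a reflection permutes the three parallel classes of hyperplanes. It fixes the $\beta$-direction and swaps the $\alpha$- and $(\alpha+\beta)$-directions, since $s_\beta(\alpha)=\alpha+\beta$. Hence it sends every $\alpha$-tunnel bijectively onto an $(\alpha+\beta)$-tunnel.

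Next, take any minimal gallery $\gamma$ lying in an $\alpha$-tunnel. By Remark \ref{typepreserved}, the image $r\gamma$ under the reflection is again a gallery with exactly the same type, since left multiplication preserves panel types. It is still minimal, because the action is an isometry of the chamber system, and it lies in a single $(\alpha+\beta)$-tunnel. Lemma \ref{tunnels} then says its type is $\delta_i(n)$ if $r\gamma$ moves right and $\gamma_i(n)$ if it moves left. The same word is the type of $\gamma$.

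The remaining step is the geometric bookkeeping: we must check which direction in the $\alpha$-tunnel maps to ``right'' in the $(\alpha+\beta)$-tunnel. With the conventions of Figure \ref{fig1}, an $\alpha$-tunnel runs along the direction perpendicular to $\alpha$. The reflection across a $\beta$-hyperplane maps its ``right and down'' direction to the ``left'' direction of the image $(\alpha+\beta)$-tunnel. Therefore ``right and down'' gives $\gamma_i(n)$, and ``left and up'' gives $\delta_i(n)$.

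This orientation check is the main obstacle, because it depends on the pictorial conventions. As a cross-check independent of the pictures, I would compute the first two steps directly from the identity. The identity's $s_2$-panel is on the left and its $s_1$-panel on the right, and $\alpha$ corresponds to $s_2$. The $\alpha$-tunnel through $1$ is bounded by hyperplanes parallel to its $s_2$-panel, so it is entered through the $s_0$- and $s_1$-panels. One then verifies that the step through the $s_1$-panel goes right/down and is followed by $s_2$, giving an increasing word. Consistency with Lemma \ref{tunnelsbeta} offers a second check: the $\beta$-statement has increasing words going up/right, and the two statements are interchanged by the reflection swapping $\alpha$ and $\beta$, which also reverses the vertical orientation.

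Finally, as in Lemma \ref{tunnels}, the claim for non-minimal galleries inside the tunnel is read as applying to the galleries that move monotonically in one direction. These are exactly the minimal galleries within the tunnel, so no further argument is needed there.
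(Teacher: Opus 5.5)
Your proposal is correct and follows the paper's own proof: reflect the $\alpha$-tunnel across a $\beta$-hyperplane, which is a type-preserving reflection. Since $s_\beta(\alpha)=\alpha+\beta$, this produces an $(\alpha+\beta)$-tunnel, and Lemma~\ref{tunnels} then applies. Your orientation check is right: ``right and down'' maps to ``left'', and the first steps from the identity are $s_1s_2$. This check, together with your remark that monotone galleries in a tunnel are exactly the minimal ones, supplies details the paper leaves implicit.
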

\begin{proof}
    Reflecting an $\alpha$-tunnel along any $\beta$-hyperplane sends the $\alpha$-tunnel to an $(\alpha+\beta)$-tunnel. The result follows from Lemma \ref{tunnels}.
\end{proof}

We can now use these results to turn the geometric description of the coordinate system into a definition on the elements of our Coxeter group. Let us see what word corresponds to the gallery $(n,m)$. Firstly, if $n$ and $m$ are nonnegative, we first cross over $n$ panels with repeating subword $s_0s_1s_2s_0s_1s_2\hdots$, ending with crossing over some panel, say $s_i$. This is because, by definition, we are travelling upwards in a $\beta$-tunnel. Then we start a new repeating subword $s_{i+2}s_{i+1}s_{i}s_{i+2}s_{i+1}s_{i}\hdots$ of length $m$, where the indices are taken modulo 3. Note that now the indices are decreasing modulo 3, because we are moving to the right in an $(\alpha+\beta)$-tunnel. So we have the word
\[s_0s_1s_2s_0s_1s_2\cdots s_is_{i+2}s_{i+1}s_{i}s_{i+2}\cdots .\]
We write this in our notation as 
\[\gamma_0(n)\delta_{n+1}(m).\]

Now, if $n$ is negative and $m$ is nonnegative, we first cross over $n$ panels with repeating subword $s_2s_1s_0s_2s_1s_0\hdots$, ending with crossing over some panel, say $s_i$. Then we skip the panel $s_{i+1}$ and continue the same pattern $s_{i+2}s_is_{i+1}s_{i+2}s_is_{i+1}\hdots$ for $m$ panels. So now we have the word
\[s_2s_1s_0s_2s_1s_0\cdots s_is_{i+2}s_is_{i+1}s_{i+2}\cdots, \]

which in our notation is
\[\delta_2(n)\delta_{2-n}(m).\]

Similarly, if $n$ is nonnegative and $m$ is negative, we get the word
$\gamma_0(n)\gamma_{n+2}(m)$. Lastly, if $n$ and $m$ are negative, we get the word
$\delta_2(n)\gamma_{-n}(m)$. So, overall, our coordinates correspond to the galleries, and therefore the elements of $W$, as follows:
\[(n,m)=\begin{cases}
    \gamma_0(n)\delta_{n+1}(m) & \text{ if } n\geq 0,m\geq 0,\\
    \delta_2(-n)\delta_{1-n}(m) & \text{ if } n<0,m\geq 0,\\
    \gamma_0(n)\gamma_{n+2}(-m)& \text{ if } n\geq 0,m<0, \\
    \delta_2(-n)\gamma_{2-n}(-m)& \text{ if } n<0,m< 0.\\
\end{cases}\]

Throughout the paper, we refer to the notation $(n,m)$ as both an alcove in the Coxeter complex, and so an element of the Coxeter group, and a word in our generators of the Coxeter group, and so a gallery in the Coxeter complex. 

\subsection{Corresponding sections of the complex}

 We now split our $\aw$ complex into 3 sections, with our identity element not in any section. We use these sections to define the algorithm for our coordinate system.

\begin{center}
\resizebox{0.5\textwidth}{!}{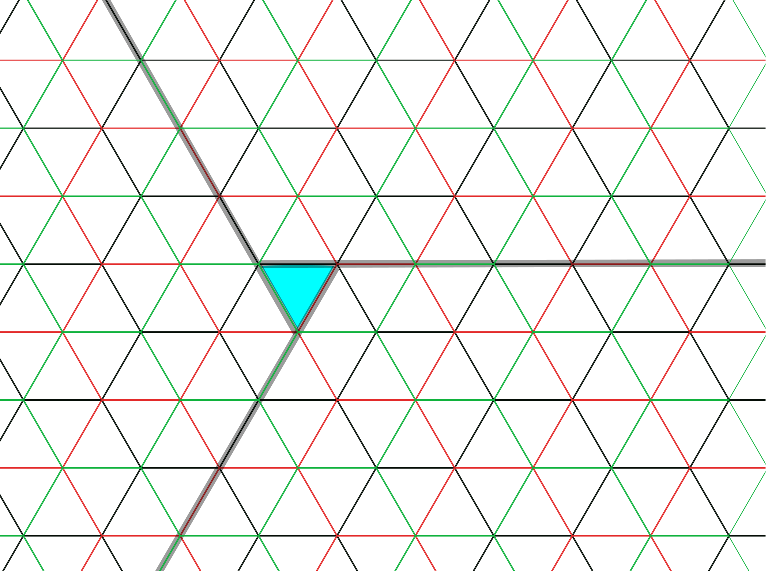}
\end{center}

We have
\begin{align*}
    s_0 \text{-region}&=\{(n,m)\mid n>0, m\geq -n\},\\
    s_1 \text{-region}&=\{(n,m)\mid n\leq 0, m> 0\},\\
    s_2 \text{-region}&=\{(n,m)\mid n\leq 0, m\leq 0, (n,m)\neq (0,0)\}\cup\{(n,m)\mid n> 0, m<-n\}.
\end{align*}

\subsection{Odd $n$ notation}\label{involution}

Now we define a slightly different notation, which has an odd first coordinate. Here, the first coordinate $n$ is similar to the previously described coordinate system, as it is still describing how many alcoves we move in the $\beta$-tunnel.

Now, though, we are moving an odd number of alcoves, and so we end up in an upwards facing alcove.  Then we move in the $\alpha$-tunnels.
\begin{center}
\includegraphics[scale=0.5]{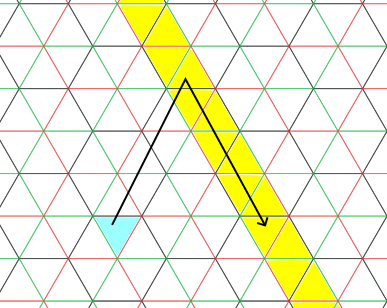}
\end{center}

We move up and to the left if our second coordinate is positive, and down and to the right if our second coordinate is negative. 

One key point to notice is that, if we represented an alcove with coordinates $(n,0)$, this alcove can also be represented by the coordinates $(n+1,-1)$.

So now each alcove can be represented by two sets of coordinates - when $n$ is even and when $n$ is odd. We have a map between these two sets of coordinates, given by the next lemma. 

Throughout this paper, for any integer $k$, we write
\[
\epsilon_k=
\begin{cases}
1 & \text{if } k \text{ is odd},\\
0 & \text{if } k \text{ is even}.
\end{cases}
\]

\begin{lemma}
    Any alcove in $\aw$ has exactly two sets of coordinates. They are related by the involution 
    \[(n,m)\mapsto (n+m+1-\epsilon_m,-m-1). \]
\end{lemma}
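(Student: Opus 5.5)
We read the statement as follows. Every alcove has exactly one even-$n$ coordinate pair and exactly one odd-$n$ coordinate pair. The map $(n,m)\mapsto (n+m+1-\epsilon_m,-m-1)$ exchanges them. Here $\epsilon_m$ is the parity indicator defined just before the lemma.

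\emph{Step 1: uniqueness within each system.} In the even system, $n$ determines a unique downward alcove in the $\beta$-tunnel through the identity. Distinct even $n$ give distinct $(\alpha+\beta)$-tunnels, since consecutive ones are separated by exactly one hyperplane $H_{\alpha+\beta,k}$. The tunnels $H_{\alpha+\beta,k}<\cdot<H_{\alpha+\beta,k+1}$ partition the complex, and within a tunnel the alcoves form a bi-infinite sequence indexed by $m\in\mathbb{Z}$ (Lemma \ref{tunnels}). So even coordinates biject onto alcoves. The same argument works for odd $n$: the starting points are the upward alcoves of the $\beta$-tunnel, and these meet each $\alpha$-tunnel exactly once. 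The $\alpha$-tunnels also partition the plane. This gives the two coordinate sets, and hence exactly two pairs per alcove.

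\emph{Step 2: the formula.} First check that the map is an involution. Put $n'=n+m+1-\epsilon_m$ and $m'=-m-1$. Since $m$ and $m'$ have opposite parity, $\epsilon_{m'}=1-\epsilon_m$. Then $n'+m'+1-\epsilon_{m'}=n+m+1-\epsilon_m-m-1+1-1+\epsilon_m=n$, and $-m'-1=m$. Also $n'\equiv n+m+1-\epsilon_m$, and $m-\epsilon_m$ is even, so $n'\equiv n+1$. Thus the map swaps the two parities.

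It remains to show that the map sends an alcove's even pair to the same alcove's odd pair. Fix even $n$. The $(\alpha+\beta)$-tunnel at height $n$ is bounded by two hyperplanes. The alcoves in it alternate between downward ones and upward ones. The downward alcoves share a panel with the tunnel below, hence lie in the $\beta$-tunnel column structure at level $n$. The upward ones touch level $n+1$.

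Now step right from $(n,0)$. The alcove $(n,m)$ lies in an $\alpha$-tunnel that crosses the $\beta$-tunnel through the identity at height $n'$. Since $\alpha$-tunnels run up-left, moving $m$ steps right shifts the intersection upward by about $m$. The precise shift is $m+1-\epsilon_m$, which is even when $m$ is odd and odd when $m$ is even. Within that $\alpha$-tunnel, the alcove is $m+1$ steps down-right from the crossing point. This gives $m'=-m-1$.

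The base case is the paper's own observation that $(n,0)=(n+1,-1)$. From there we induct on $m$, in both directions. Moving one step in the $(\alpha+\beta)$-tunnel either stays in the same $\alpha$-tunnel or crosses an $\alpha$-hyperplane, and which one happens alternates with the parity of $m$. Each crossing raises the base point by $2$ and adds one step to $m'$ in the down-right direction. Tracking this gives exactly the increments of $n'$ and $m'$ in the formula.

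\emph{Main obstacle.} The hardest part is the careful bookkeeping of which steps cross $\alpha$-hyperplanes versus $\beta$-hyperplanes, together with the sign conventions for negative $m$. To handle it, we would express each alcove by its three hyperplane indices, $\lfloor\langle x,\alpha\rangle\rfloor$, $\lfloor\langle x,\beta\rangle\rfloor$ and $\lfloor\langle x,\alpha+\beta\rangle\rfloor$. Then we compute these indices for both coordinate descriptions, as functions of $(n,m)$ and of $(n',m')$, and check that they agree. The alternative would be to verify, using the explicit words in the case table and the braid relations, that the two words define the same group element. That check is more tedious, so we would use it only as a cross-check on small cases.
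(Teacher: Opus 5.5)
Your proposal is correct, and its main step takes a different route from the paper's proof.

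\textbf{Comparison with the paper.} The paper checks the involution exactly as you do. It then shows that $(n,m)$ and its image name the same alcove by reading lengths off a trapezium picture (Lemma \ref{triangles}) for $n>0$ with $m$ even and positive, gets odd $m$ by one further step, and leaves the other sign regions to the reader. Uniqueness within each parity is only asserted.

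You actually prove uniqueness: the $\beta$-tunnel through $1$ meets every $(\alpha+\beta)$-tunnel and every $\alpha$-tunnel in a two-alcove rhombus, and each family of tunnels partitions the plane. You then check the formula either by induction along the $(\alpha+\beta)$-tunnel from $(n,0)=(n+1,-1)$, or by comparing the wall indices $\lfloor\langle x,\gamma\rangle\rfloor$ of the two descriptions.

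The index comparison is the most robust version, and it does go through. It reduces the claim to two short identities, one for each parity of $m$, valid for all signs of $n$ and $m$ at once. The paper's picture method needs region-by-region redrawing, which is error-prone: its own odd-$m$ step records $(n+m,-m-2)$ as the odd coordinates of $(n,m-1)$, where the involution gives $(n+m,-m)$, and then adds one instead of subtracting. The cost of your route is writing down the indices of the starting alcoves in both notations, including for negative $n$.

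\textbf{Slips to fix.}
\begin{enumerate}
\item The shift $m+1-\epsilon_m$ is odd for every $m$, as your own congruence $n'\equiv n+1$ shows. It does not alternate in parity.
\item In the induction, $m'$ must drop by one at \emph{every} step to the right, not only at $\alpha$-crossings. For even $n$:
\begin{itemize}
\item From a downward alcove ($m$ even), the step crosses a $\beta$-wall. It moves one place down-right inside the same $\alpha$-tunnel, and the base point is fixed.
\item From an upward alcove ($m$ odd), the step crosses an $\alpha$-wall into the next $\alpha$-tunnel. The base point rises by $2$, and the alcove is again one place further down-right.
\end{itemize}
This matches $n'(m+1)-n'(m)=1-\epsilon_{m+1}+\epsilon_m\in\{0,2\}$ and $m'(m+1)-m'(m)=-1$. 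As written, you attach the change in $m'$ only to the crossings, which would undercount.
\item A downward alcove shares its horizontal panel with the $(\alpha+\beta)$-tunnel above it, not below.
\end{enumerate}
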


\begin{proof}
    We first note that it is an involution, by calculating
    \begin{align*}
        (n+m+1-\epsilon_m,-m-1)\mapsto& ((n+m+1-\epsilon_m)+(-m-1)+1-\epsilon_{-m-1},-(-m-1)-1)\\=&(n+1-(\epsilon_m+\epsilon_{-m-1}),m),
    \end{align*}
    and noting that $\epsilon_m+\epsilon_{-m-1}=1$ for every $m$. 

    Next, we show that $(n,m)$ and $(n+m+1-\epsilon_m,-m-1)$ represent the same alcove in $\aw$. Here we consider cases. We draw the picture of the galleries of both coordinate systems, and use the basic geometric facts from Lemma \ref{triangles} to prove that the map between the two sets of coordinates is correct. Now we have shown that our map is an involution, we only need to show that the map is correct when moving from the even notation to the odd notation.

    Firstly, let us consider an alcove with our even coordinates $(n,m)$, and first assume that $n$ is positive and that $m$ is even and positive. Let $(x,y)$ be the odd coordinates of this alcove. Let each edge of the triangles have length 1. Then we have the following picture.

\begin{center}
\resizebox{0.4\linewidth}{!}{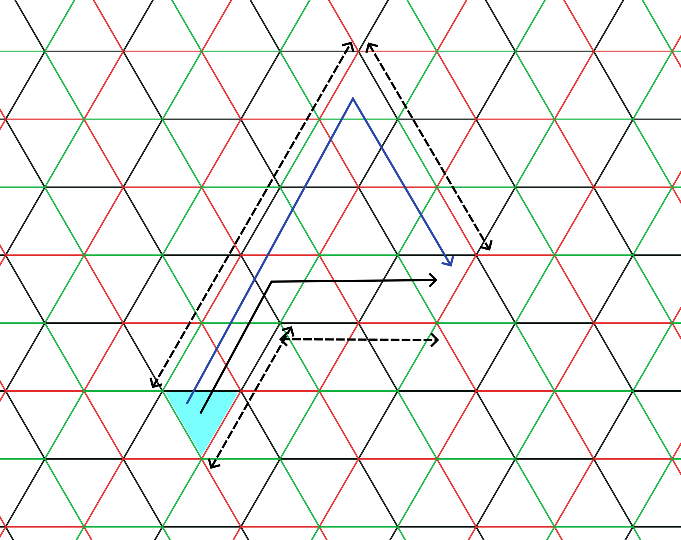}
\end{center}

The black gallery is the gallery described by the even coordinates, and the blue gallery is the gallery described by the odd coordinates. Now we can see that 
\[\dfrac{x+1}{2}=\dfrac{n}{2}+\dfrac{m}{2}+1,\]
and so 
\[x=n+m+1.\]
Similarly, 
\[\dfrac{-y+1}{2}=\dfrac{m}{2}+1,\]
and so 
\[y=-m-1.\]
 Now if $m$ is odd, then $(n,m-1)$ is the case above. So the odd coordinates of $(n,m-1)$ are $(n+m,-m-2)$. Then, in the odd coordinate notation, to move to $(n,m)$ we keep the first coordinate the same and add one to the second coordinate. So the odd coordinates of $(n,m)$ are $(n+m,-m-1)$, which again agrees with the involution.

The other regions are proven similarly and can be verified by the reader. 

We note that it is clear from the definitions of each notation that there is a unique set of coordinates for each alcove when we fix the parity of $n$. So the two sets of coordinates are the only two representing the alcove. 
Therefore, there are exactly two representations for each alcove in $\aw$. 
\end{proof}

\begin{remark}
    We note that the involution always changes the sign of the second coordinate, between negative and nonnegative. Also, the involution changes the parity of the second coordinate.
\end{remark}

Within our results, we use both of these sets of coordinates. This allows us to avoid a lot of case handling. In any result, if there is a set of coordinates $(n,m)$, this is defined using the first notation if $n$ is even, and the second notation if $n$ is odd.

\section{The Words-to-Coordinates algorithm}\label{algo}
We now develop the \hyperref[algorithm]{Words-to-Coordinates Algorithm} to convert reduced words to their coordinates. This algorithm gives the even coordinates of the word. Using the involution above, we can obtain the odd coordinates. First note that again all $s_i$ subscripts are defined modulo 3. 

We take a word $w$, and assume that it is reduced. Starting from the left, we break the word down into increasing and decreasing subwords that are as long as possible. Note again that the subscripts must be adjacent modulo 3, we cannot skip elements in the subword. 

\begin{algorithm}[Words-to-Coordinates Algorithm]\label{algorithm}
    Define an algorithm mapping reduced words of elements in $\aw$ to coordinates as follows. 
    
    The identity word is assigned coordinates $(0,0)$. Now assume that we have a nonempty reduced word $w$ starting with $s_i$.
    \begin{enumerate}
        \item  First write $w$ as $w=w_1w_2$, such that $w_1$ is a decreasing subword of maximal length.
    \end{enumerate}
    The next step depends on the lengths of $w_1$ and $w_2$.
    \begin{itemize}
        \item[ - ] If $\ell(w_1)$ is even and $\ell(w_2)>0$,  set $j\equiv i-1\pmod 3$. 
        \begin{enumerate}
            \setcounter{enumi}{1}
            \item Begin decomposing $w$. Start with a decreasing subword of maximal length. 
            \item Continue dividing $w$ into alternating increasing and decreasing subwords of maximal length. So write the word $w$ as 
            \[w=\delta_{i_1}(n_1)\gamma_{i_2}(n_2)\delta_{i_3}(n_3)\gamma_{i_4}(n_4)\hdots,\]
             with $n_k>0$ for each $k$.
             \item Set $u=\sum_{k \text{ even}}n_k$, so $u$ is the number of elements in increasing subwords. 
        \end{enumerate}

    \item [ - ]
    Otherwise, set $j=i$. 
    \begin{enumerate}
    \setcounter{enumi}{1}
        \item Begin Decomposing $w$. Start with an increasing subword of maximal length, even if that means it is of length one.
        \item Continue dividing the word into alternating increasing and decreasing subwords of maximal length.  So write the word $w$ as \[w=\gamma_{i_1}(n_1)\delta_{i_2}(n_2)\gamma_{i_3}(n_3)\delta_{i_4}(n_4)\hdots,\]
    with $n_k>0$ for each $k$. 
    \item Set $u=\sum_{k \text{ odd}}n_k$, so again $u$ is the number of elements in increasing subwords.  
    \end{enumerate}
    \end{itemize}

\begin{enumerate}
\setcounter{enumi}{4}
    \item After $w$ has been decomposed, if the word ends in an increasing subword, and the length of that end subword is odd, set $x=u+1$ and $y=\ell(w)-u-1$, where $\ell(w)$ is the length of the word $w$. Otherwise, let $x=u$ and $y=\ell(w)-u$.
\item The coordinates of $w$ are then
\[(n,m)=\begin{cases}
    (x,y)& j=0,\text{ and }x \text{ even}\\
    (x+y+1-\epsilon_y,-y-1)& j=0, \text{ and }x \text{ odd},\\
    (-x-y+\epsilon_y,x-\epsilon_y)& j=1, \text{ and }x \text{ even},\\
    (1-x,x+y)& j=1, \text{ and }x\text{ odd},\\
    (y+\epsilon_{y},-x-y)& j=2, \text{ and } x \text{ even},\\
    (-y-\epsilon_y,-x+\epsilon_y)& j=2, \text{ and } x \text{ odd}.\\
\end{cases}\]
\end{enumerate}

\end{algorithm}

\begin{remark}
    Here, the variable $j$ is telling us that the element $w$ lies in the $s_j$-region. We will see later that this algorithm correctly identifies the regions. 
\end{remark}

\begin{remark}\label{timecomplexity}
The \hyperref[algorithm]{Words-to-Coordinates Algorithm} runs in linear time with respect to the length of the input word. Indeed, each generator is examined at most twice while decomposing the word into maximal increasing and decreasing subwords. During this process, we only keep track of the lengths of the current subword, the total number of generators contained in increasing subwords, and the parity of the final subword. Once this decomposition has been completed, the remaining calculations involve only a fixed number of arithmetic operations. Hence, if the input word has length $\ell(w)$, the overall time complexity of the algorithm is $O(\ell(w))$.
\end{remark}

\begin{example}
    Given the word
    \[w=s_0s_1s_2s_0s_2s_1s_2s_0s_1s_2s_1,\]
    the longest decreasing subword at the beginning of the word is $s_0$. This is of odd length, so this subword lies in the $s_0$-region. We now see that the first maximal increasing subword is $s_0s_1s_2s_0$. Then we have a decreasing subword $s_2s_1$. Then we have another increasing subword $s_2s_0s_1s_2$. Lastly, we have a decreasing subword of $s_1$. So we have written this word as 
    \[w=\gamma_0(4)\delta_2(2)\gamma_2(4)\delta_1(1).\]
    So we have 8 elements in increasing subwords, and therefore $u=8$. Now this word does not end in an odd increasing subword, and $j=0$, so $(n,m)=(8,3)$. 
    \begin{center}
\end{center}
\end{example}

\begin{example}
    Given the word 
    \[v=s_2s_1s_0s_2s_0s_1s_0s_2s_0,\]
    we start with a decreasing subword of $s_2s_1s_0s_2$. So we know that this word lies in the $s_1$-region and $j=1$. So we begin the algorithm with the decreasing subword of $s_2s_1s_0s_2$. Then we have an increasing subword of $s_0s_1$. We then have a decreasing subword of $s_0s_2$. Lastly, we finish with the increasing subword $s_0$. Therefore, we have written our word as 
    \[v=\delta_2(4)\gamma_0(2)\delta_0(2)\gamma_0(1).\]
    So we have $u=3$. Now this word does end in an odd increasing subword, so we have $x=4$ and $y=5$. This gives us $(n,m)=(-8,3)$.

    \begin{center}

\end{center}
\end{example}

\subsection{Proof of algorithm}
Given a reduced word $w$, let us denote by $(n_w,m_w)$ the output of the \hyperref[algorithm]{Words-to-Coordinates Algorithm}. We now prove that the algorithm does indeed give us the correct element in the Coxeter group. Equivalently, we also prove that the gallery of any reduced word $w$ has the same end alcove as the gallery described by the corresponding output of the algorithm $(n_w,m_w)$.

Our method to prove this theorem involves showing that the algorithm is constant under the braid relations and that it holds for some given set of reduced words that cover the whole group. As we have restricted our algorithm to reduced words, this proves that the algorithm outputs the correct coordinates for any reduced word.

First, we show that the region calculated in the algorithm is constant under the braid relations. Later, we will show that the algorithm outputs the correct region for some standard reduced words. Combining these results, we will be able to conclude  that the algorithm outputs the correct region for any reduced word.

\begin{proposition}\label{regions}
    If $w,v\in W$ are two reduced words that differ by a braid relation, then the region produced by the algorithm for $w$ and $v$ is the same region. 
\end{proposition}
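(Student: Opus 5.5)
The plan is to reduce the statement to a purely combinatorial check on the first few letters of the word. The region is the index $j$ computed at the start of the \hyperref[algorithm]{Words-to-Coordinates Algorithm}, and $j$ depends on only three things: the first letter $s_i$, the parity of $\ell(w_1)$ (where $w_1$ is the maximal decreasing prefix), and whether $w_2$ is empty.

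In $\aw$ every braid relation has the form $s_as_bs_a=s_bs_as_b$ with $a\neq b$. I will use two basic observations.
\begin{itemize}
\item For $a\neq b$, the pair $s_as_b$ is decreasing if $b\equiv a-1$ and increasing if $b\equiv a+1 \pmod 3$. So in a factor $s_as_bs_a$, exactly one of the two consecutive pairs $s_as_b$ and $s_bs_a$ is decreasing. Hence no braid factor is a decreasing (or increasing) word. In particular, a word with $w_2$ empty contains no braid factor, and neither does the other side of the braid move.
\item Since $w$ and $v$ are reduced, neither contains a factor $s_cs_c$.
\end{itemize}

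Let the braid factor occupy positions $p,p+1,p+2$ of $w$, and set $k=\ell(w_1)$ for $w$. I then split into cases according to where $p$ sits relative to $k$.

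\emph{Case $p\geq k+2$.} The prefix $w_1$ and the letter that breaks it are untouched. So the first letter, $\ell(w_1)$ and the nonemptiness of $w_2$ are all unchanged, and $j$ is unchanged.

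\emph{Case $p\leq k-1$.} Positions $p,p+1$ lie in $w_1$, so $s_as_b$ is decreasing, $s_bs_a$ is increasing, and $w_1$ ends exactly at $p+1$.
\begin{itemize}
\item If $p=1$, then $w$ gives $j=a-1=b$ with $\ell(w_1)=2$, while $v=s_bs_as_b\cdots$ has $\ell(v_1)=1$, which is odd, so $j=b$.
\item If $p>1$, let $s_c$ be the letter at position $p-1$, so $a\equiv c-1$. Then $b\equiv c+1$, so in $v$ the pair $s_cs_b$ is increasing and $v_1$ ends at $p-1$. The lengths $p+1$ and $p-1$ have the same parity, both $w_2$ and $v_2$ are nonempty, and the first letter is unchanged.
\end{itemize}

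\emph{Case $p=k$.} Here $s_as_b$ is increasing. If $p>1$ with preceding letter $s_c$, then $a\equiv c-1$ and $b\equiv a+1\equiv c$, so $v$ contains $s_cs_c$, contradicting reducedness. So $p=1$, and this is the mirror of the $p=1$ computation above: $w$ gives $j=a$ with $\ell(w_1)=1$, and $v$ gives $j=b-1=a$ with $\ell(v_1)=2$ and $v_2$ nonempty.

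\emph{Case $p=k+1$.} Let $s_c$ be the last letter of $w_1$. Then $a\not\equiv c-1$, and $a\neq c$ by reducedness, so $a\equiv c+1$. Either $b\equiv c$, which is impossible since $v$ would contain $s_cs_c$, or $b\equiv c-1$. In the latter case $v_1$ absorbs $s_bs_a$, because $a\equiv c+1\equiv b-1$, and then stops, because $s_as_b$ is increasing. So $\ell(v_1)=\ell(w_1)+2$, the parity is preserved, $v_2\neq\emptyset$, and the first letter is unchanged.

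Since the braid relation is symmetric, it suffices to run this case analysis starting from either word. The main obstacle I expect is the bookkeeping at the boundary cases $p\in\{k,k+1\}$, where the maximal decreasing prefix can change length. The key tool there is reducedness, which excludes the problematic configurations, and the argument has to be set up so that both $w$ and $v$ are covered without circularity.
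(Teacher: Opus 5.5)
Your argument is correct and is essentially the paper's proof. Both reduce the region to the first letter and the parity of the maximal decreasing prefix, use reducedness to rule out a braid factor starting at the end of a prefix of length greater than one, and check the two critical positions: the braid move at the very start, and the braid move immediately after the prefix (which lengthens the prefix by two). The only difference is that the paper assumes without loss of generality that the factor is $s_js_{j+1}s_j$, so your cases $p=k$ and $p=k+1$ are its Cases 1 and 2, while your case $p\le k-1$ is the same pair of cases read from the other word of the braid move.
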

\begin{proof}
    We can assume without loss of generality that $w$ contains the braid relation $s_js_{j+1}s_j$ for some $j\in\{0,1,2\}$.
    Let us write $w=\delta_i(d)w_2$ and $v=\delta_k(e)v_2$, with $d$ and $e$ maximal. We note that the word $\delta_i(n)$ contains no braid relation. So  $w\neq \delta_i(n)$, and therefore we can assume that $\ell(w_2)>0$. By the definition of a decreasing subword, the braid relation in $w$ cannot completely appear in the decreasing subword $\delta_i(d)$. Furthermore, we cannot have the first two elements of the braid relation appear in the decreasing subword of $w$, as $s_js_{j+1}$ is not decreasing. Lastly, if the first element appears in the decreasing subword, and $d>1$, then $w$ must look like 
    \[w=\delta_i(d-2)s_{j+1}s_js_{j+1}s_j\hdots,\]
    and so, applying the braid relation, we can clearly see that this is not reduced. 
    
     Therefore, the braid relation must be contained in $w_2$, or $d=1$ and $w$ begins with the braid relation. Assume that $w_2$ contains the braid relation. Let us first consider the case that $w_2$ does not begin with the braid relation. Then, this would not affect the algorithm and so the region produced would be unchanged. So we are left with two cases. Firstly, if $w$ begins with the braid relation, and secondly if the braid relation occurs directly after the decreasing subword.

    Case 1: $w=s_js_{j+1}s_j\hdots$, and so 
\[w=\underbrace{s_j}_\downarrow\underbrace{s_{j+1}}_\uparrow\underbrace{s_j\hdots}_\downarrow\hdots,\]
    so $d$ is odd and the algorithm tells us that $w$ is in the $s_j$-region.
    Now 
    \[v=\underbrace{s_{j+1}s_j}_\downarrow\underbrace{s_{j+1}\hdots}_\uparrow\hdots,\]
    so $e$ is even, and the algorithm tells us that $v$ still lies in the $s_{(j+1)-1}=s_j$-region. 

    Case 2: $w=\delta_i(d)s_js_{j+1}s_j\hdots$ for $d>0$, and so, as $d$ is maximal and $w$ is reduced, the decreasing subword must end in $s_{j+2}$. So
    \[w=\underbrace{\delta_i(d-1)s_{j+2}}_\downarrow s_js_{j+1}s_j\hdots,\]
    and applying the braid relation we get
    \[v=\underbrace{\delta_i(d-1)s_{j+2}s_{j+1}s_j}_\downarrow s_{j+1}\hdots,\]
    and hence $e=d+2$, and so the parity is unchanged. As the starting element of $w$ and $v$ is the same, the algorithm outputs the same region.
\end{proof}

We now show that the algorithm is constant under braid relations. 

\begin{proposition} \label{regionslemma}
    Given two reduced words $w,v\in W$ that differ by a braid relation, we have $n_w=n_v$ and $m_w=m_v$. 
\end{proposition}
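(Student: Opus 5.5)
We argue by a local case analysis, in the same style as Proposition \ref{regions}. Write $w = a\,b\,c$ and $v = a\,b'\,c$, where $b = s_js_{j+1}s_j$ and $b' = s_{j+1}s_js_{j+1}$ for some $j$; we may assume $b$ is the occurrence in $w$.

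By Proposition \ref{regions}, $w$ and $v$ produce the same region, hence the same value of $j$ in the algorithm. So it suffices to show that the pair $(x,y)$ computed in step 5 coincides for $w$ and $v$, and that $x$ has the same parity for both. Then step 6 produces identical outputs. Since $\ell(w)=\ell(v)$ and $y$ is determined by $\ell(w)$, $u$ and the final-subword correction, we must show the following: the number $u$ of letters lying in increasing subwords, together with the indicator ``ends in an increasing subword of odd length'', changes in a way that leaves $x=u$ or $x=u+1$ invariant.

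The key observation is that the decomposition into maximal alternating monotone subwords only changes near the braid. In $s_js_{j+1}s_j$ the step $s_j\to s_{j+1}$ is increasing and the step $s_{j+1}\to s_j$ is decreasing. In $s_{j+1}s_js_{j+1}$ the first step is decreasing and the second increasing. Every step of a reduced word in $\aw$ is either $+1$ or $-1$ modulo 3, since consecutive letters are distinct. So the word is encoded by its first letter and a sequence of signs $\pm$. The braid move replaces the local sign pattern $(\sigma,+,-,\tau)$ by $(\sigma',-,+,\tau')$. Here $\sigma,\tau$ are the steps into and out of the braid, and these also change, because the boundary letters of the braid are swapped between $s_j$ and $s_{j+1}$.

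Reducedness constrains these boundary steps. For example, $w$ cannot continue $s_js_{j+1}s_js_{j+1}$ (it would not be reduced), so after $b$ the next letter is $s_{j+2}$, giving $\tau=-$. Similarly, the letter before $b$ must be $s_{j+2}$ or absent, giving $\sigma=-$ or nothing. Dually for $v$, the next letter is $s_{j+2}$, so $\tau'=+$, and the previous letter is $s_{j+2}$, so $\sigma'=+$. Thus the local sign pattern $-+--$ in $w$ becomes $+-++$ in $v$, with the missing endpoints handled as boundary cases. In each case I would then recount how many letters fall into increasing runs. A letter belongs to an increasing run according to the sign of an adjacent step, following the greedy maximal convention of step 2 or step 3.

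This yields a finite list of cases:
\begin{enumerate}
\item the braid is interior, at the start of the word, or at the end of the word;
\item the braid occurs right after the initial decreasing block, in the two sub-branches of the algorithm (the even/odd length of $w_1$, as in Case 2 of Proposition \ref{regions});
\item the word ends at or just after the braid, which is where the odd-final-increasing correction $x=u+1$ enters.
\end{enumerate}
I expect the interior case to give $u_v = u_w$ exactly, since the runs are merely re-split.

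The main obstacle will be the boundary cases. There, the change of branch in step 1 of the algorithm can shift $u$ by one or two, and this shift must be compensated exactly by the final-run parity correction or by the parity rule in step 6. This compensation is exactly why the algorithm has those corrections. If the invariance of $x$ fails literally in some boundary case, I would check the weaker statement that step 6 nevertheless agrees. Such a case might be $d=1$ versus $e=2$ in Case 1 of Proposition \ref{regions}, where $w$ uses the $\gamma$-first decomposition and $v$ the $\delta$-first one. For step 6 to agree, the differences in $x$ and $y$ must be absorbed by the $\epsilon_y$ terms, using the involution lemma.
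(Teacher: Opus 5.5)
Your reduction matches the paper's. The value of $j$ agrees by Proposition~\ref{regions}, and $y=\ell(w)-x$ in both branches of step 5, so it suffices to show $x_w=x_v$. This is checked locally, according to whether the braid touches the start or the end of the word; these are the paper's Cases 1--4, with subcases by the neighbouring letter.

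\textbf{The error.} The local sign pattern on which your plan rests is wrong.
\begin{itemize}
\item The letter before $s_js_{j+1}s_j$ is $s_{j+2}$, and the step $s_{j+2}\to s_j$ raises the index by one modulo $3$. So $\sigma=+$, not $-$. Compare the increasing subword $s_{i+2}s_is_{i+1}$ in the paper's Case 3(i).
\item Dually, $s_{j+2}\to s_{j+1}$ is decreasing, so $\sigma'=-$.
\item The braid move therefore exchanges $(+,+,-,-)$ with $(-,-,+,+)$, not $-+--$ with $+-++$.
\end{itemize}
This is not cosmetic. Your patterns contain one and three increasing steps respectively. Counting letters by the sign of the step into them then gives $u_v=u_w+2$ in the interior case, contradicting the invariance you expect. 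They also break the end case: with $c$ empty you would get $x_v=x_w+2$.

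\textbf{The missing lemma.} You also need a precise version of ``a letter belongs to an increasing run according to the sign of an adjacent step''. The correct statement is that every letter other than the first lies in an increasing subword if and only if the step \emph{into} it is increasing. The reason is that each new maximal run starts right after the step of its own type that ended the previous run, and then continues exactly through steps of that type. Hence $u$ equals the number of increasing steps, plus $1$ if the decomposition starts with an increasing subword.

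\textbf{How the cases close once corrected.}
\begin{itemize}
\item \emph{Interior braid.} The number of increasing steps is unchanged. The first letter is unchanged, and the initial decreasing run changes length by $0$ or $2$, so the branch is unchanged. The final run keeps its type and the parity of its length.
\item \emph{Braid at the start.} The branch switches from increasing-first ($d=1$) to decreasing-first ($e=2$), losing the first-letter credit. This is recovered by $\tau=-$ becoming $\tau'=+$, or, if $c$ is empty, by $v$ ending in the odd increasing run $s_{j+1}$.
\item \emph{Braid at the end.} $u$ drops by one, and $v$ ends in an increasing run of length one, which restores $x$.
\end{itemize}
So $(x,y)$ is literally invariant in every case, and your fallback through the $\epsilon_y$ terms and the involution is never needed. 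As written, though, the proposal leaves exactly these boundary verifications undone. Once the signs are fixed and the membership lemma is stated, your sign-counting formulation is a compact replacement for the paper's explicit underbraced enumeration.
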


\begin{proof}
    The three braid relations we have are 
    \[s_0s_1s_0\longleftrightarrow s_1s_0s_1,\]
    \[s_1s_2s_1\longleftrightarrow s_2s_1s_2,\]
    \[s_2s_0s_2\longleftrightarrow s_0s_2s_0.\]
    We can assume without loss of generality that $w=w_1s_is_{i+1}s_iw_2$ and $v=w_1s_{i+1}s_is_{i+1}w_2$ for some reduced, possibly empty, words $w_1, w_2$ and $i\in \{0,1,2\}$. We proceed by considering cases where $w_1$ and $w_2$ are possibly empty. We apply the algorithm to $w$, outputting coordinates $(n_w,m_w)$. In some of the cases, we further split into two subcases:  the case that the start of the braid relation is contained in an increasing subword and the case that the start is contained in a decreasing subword when the algorithm is applied.
    
    Case 1: $w_1$ and $w_2$ are empty. Then $w=s_is_{i+1}s_i$. So the algorithm tells us that $w$ lies in the $s_i$ region. Now, running the algorithm, we get 
        \[w=\underbrace{s_is_{i+1}}_\uparrow \underbrace{s_i}_\downarrow,\]
        and so $u_w=2$, and $x_w=2$, $y_w=1$. Then applying the braid relation we get 
        \[v=\underbrace{s_{i+1}s_i}_\downarrow \underbrace{s_{i+1}}_\uparrow,\]
        as we know that we are in the $s_i$-region  and we are now starting with $s_{i+1}$, so we start with a decreasing subword. Now we have $u_v=1$, but we now have a word ending in an odd increasing subword. So $x_v=2$, $y_v=1$, and hence $n$ and $m$ are preserved, as, by Proposition \ref{regions}, the region determined by the algorithm is constant under braid relations.

    Case 2: $w_1$ is empty but $w_2$ is nonempty. 
    So there is at least one more element after $s_is_{i+1}s_i$. Then this element must be $s_{i+2}$; if the element was $s_i$ we would immediately see that it is not a reduced word, and if it was $s_{i+1}$ we could apply the braid relation and again see it was not a reduced word. Now we need to consider three subcases, depending on the length of $w_2$ and the element after $s_{i+2}$ in $w_2$.

    \begin{enumerate}[(a)]
        \item $\ell(w_2)=1$, and so $w_2=s_{i+2}$. Then we have
        \[w=\underbrace{s_is_{i+1}}_\uparrow \underbrace{s_is_{i+2}}_\downarrow,\]
        and applying the braid relation we have
        \[v=\underbrace{s_{i+1}s_i}_\downarrow \underbrace{s_{i+1}s_{i+2}}_\uparrow,\]
        as we are now starting with $s_{i+1}$, so we start with a decreasing subword. These both give $x_w=x_v=2$, $y_w=y_v=2$. 
        
        \item $\ell(w_2)>1$, and $w_2=s_{i+2}s_{i+1}\hdots$, with the dots possibly representing an empty string. Then we have
        
        \[w=\underbrace{s_is_{i+1}}_\uparrow \underbrace{s_is_{i+2}s_{i+1}\hdots}_\downarrow\hdots,\]
        and applying the braid relation we have
        \[v=\underbrace{s_{i+1}s_i}_\downarrow \underbrace{s_{i+1}s_{i+2}}_\uparrow\underbrace{s_{i+1}\hdots}_\downarrow\hdots,\]
        as again we are now starting with $s_{i+1}$. Now the number of elements in an increasing subword is preserved, and we have not changed the end subword, and so $n$ and $m$ are preserved.
        \item $\ell(w_2)>1$, and $w_2=s_{i+2}s_i\hdots$, with the dots possibly representing an empty string. Then we have
        
        \[w=\underbrace{s_is_{i+1}}_\uparrow \underbrace{s_is_{i+2}}_\downarrow\underbrace{s_i\hdots}_\uparrow\hdots,\]
        and applying the braid relation we have
        \[v=\underbrace{s_{i+1}s_i}_\downarrow \underbrace{s_{i+1}s_{i+2}s_i\hdots}_\uparrow\hdots,\]
        as again we are now starting with $s_{i+1}$. Now the number of elements in an increasing subword is preserved, and, as we have not changed the parity of the length of a possible end increasing subword, $n$ and $m$ are preserved.
    \end{enumerate}
    Case 3: $w_1$ is nonempty but $w_2$ is empty. As $w_1$ is nonempty,  by the same argument as above, $w_1$ must end in $s_{i+2}$. So our $w$ looks like $w=\hdots s_{i+2}s_is_{i+1}s_i$. We now run the algorithm on this word. We consider two cases: when the $s_{i+2}$ element at the end of the subword $w_1$ is contained in an increasing subword, and when it is contained in a decreasing subword. 
\begin{enumerate}[(i)]
        \item $s_{i+2}$ is contained in an increasing subword. So we have
    \[w=\hdots\underbrace{\hdots s_{i+2}s_is_{i+1}}_\uparrow \underbrace{s_i}_\downarrow.\]
    Let $u$ be the number of elements in increasing subwords. Then we are not ending in an odd increasing subword, so $x_w=u$ and $y_w=\ell(w)-u$.
    
    Applying the braid relation we have 
    \[v=\hdots\underbrace{\hdots s_{i+2}}_\uparrow \underbrace{s_{i+1}s_i}_\downarrow \underbrace{s_{i+1}}_\uparrow.\]
    Now we note that everything up to, and including, our $s_{i+2}$ has not been changed, and so whether it is in an increasing or decreasing subword has not changed. Then, overall, the number of elements in an increasing subword has decreased by one, so we now have $u-1$ elements in increasing subwords. But we are now ending in an odd length increasing subword, so we still have $x_v=u$ and $y_v=\ell(w)-u$. Then our $n$ and $m$ are unchanged. 

    \item $s_{i+2}$ is contained in a decreasing subword. So we have 
    \[w=\hdots\underbrace{\hdots s_{i+2}}_\downarrow \underbrace{s_is_{i+1}}_\uparrow \underbrace{s_i}_\downarrow.\] 
    Applying the braid relation, we get
    \[v=\hdots\underbrace{\hdots s_{i+2}s_{i+1}s_i}_\downarrow \underbrace{s_{i+1}}_\uparrow.\]
    Again, the number of elements in an increasing subword has decreased by one. But our word now ends in an odd increasing subword, and so our $n$ and $m$ are the same before and after the braid relation.
    \end{enumerate}

    Case 4: Both $w_1$ and $w_2$ are nonempty. Then $w_1$ must end with $s_{i+2}$ and $w_2$ must start with $s_{i+2}$. So $w=\hdots s_{i+2}s_is_{i+1}s_is_{i+2}\hdots$, where the dots can be empty. Again, we consider the two cases when the $s_{i+2}$ element at the end of the subword $w_1$ is contained in an increasing subword, and when it is contained in a decreasing subword. Within these cases we need to consider three subcases, just as we did in Case 1, depending on the length of $w_2$.

    \begin{enumerate}[(i)]
        \item Our starting $s_{i+2}$ appears in an increasing subword.
        \begin{enumerate}
        \item $\ell(w_2)=1$, and so $w_2=s_{i+2}$. So we have
        \[w=\hdots\underbrace{\hdots s_{i+2}s_is_{i+1}}_\uparrow\underbrace{s_is_{i+2}}_\downarrow,\]
        which we can apply the braid relation to. Then we get
        \[v=\hdots\underbrace{\hdots s_{i+2}}_\uparrow\underbrace{s_{i+1}s_i}_\downarrow\underbrace{s_{i+1}s_{i+2}}_\uparrow,\]
        and we can clearly see that the number of elements in increasing subwords is preserved. The braid move now means that we end in an increasing subword. But this subword has even length. So overall the values of $n$ and $m$ are preserved under this braid move.

        \item $\ell(w_2)>1$, and $w_2=s_{i+2}s_{i+1}\hdots$, with the dots possibly representing an empty string. Then we have 
        \[w=\hdots\underbrace{\hdots s_{i+2}s_is_{i+1}}_\uparrow\underbrace{s_is_{i+2}s_{i+1}\hdots}_\downarrow\hdots,\]
        and, applying the braid relation, we get
        \[v=\hdots\underbrace{\hdots s_{i+2}}_\uparrow\underbrace{s_{i+1}s_i}_\downarrow\underbrace{s_{i+1}s_{i+2}}_\uparrow\underbrace{s_{i+1}\hdots}_\downarrow\hdots,\]
        which preserves $u$ and whether the word ends in an odd increasing subword. 

        \item $\ell(w_2)>1$, and $w_2=s_{i+2}s_i\hdots$, with the dots possibly representing an empty string. Then we have 
        \[w=\hdots\underbrace{\hdots s_{i+2}s_is_{i+1}}_\uparrow\underbrace{s_is_{i+2}s_{i+1}}_\downarrow\underbrace{s_i\hdots}_\uparrow\hdots,\]
        and, applying the braid relation, we get
        \[v=\hdots\underbrace{\hdots s_{i+2}}_\uparrow\underbrace{s_{i+1}s_i}_\downarrow\underbrace{s_{i+1}s_{i+2}s_i\hdots}_\uparrow\hdots,\]
        which preserves $u$ and the parity of any end subword.

        \end{enumerate}
        \item Our starting $s_{i+2}$ appears in a decreasing subword. Again consider the three subcases.
        \begin{enumerate}
            \item $\ell(w_2)=1$, and so $w_2=s_{i+2}$.
        \[w=\hdots\underbrace{\hdots s_{i+2}}_\downarrow\underbrace{s_is_{i+1}}_\uparrow\underbrace{s_is_{i+2}}_\downarrow,\]
        and applying the braid relation we get
        \[v=\hdots\underbrace{\hdots s_{i+2}s_{i+1}s_i}_\downarrow\underbrace{s_{i+1}s_{i+2}}_\uparrow.\]
        Now clearly the number of elements in an increasing subword is unchanged under the braid move. Also, we are now ending on an increasing subword, but its length is even. So again, $n$ and $m$ are preserved under the braid move. 

         \item $\ell(w_2)>1$, and $w_2=s_{i+2}s_{i+1}\hdots$, with the dots possibly representing an empty string. Then we have 
        \[w=\hdots\underbrace{\hdots s_{i+2}}_\downarrow\underbrace{s_is_{i+1}}_\uparrow\underbrace{s_is_{i+2}s_{i+1}\hdots}_\downarrow\hdots,\]
        and applying the braid relation we get
        \[v=\hdots\underbrace{\hdots s_{i+2}s_{i+1}s_i}_\downarrow\underbrace{s_{i+1}s_{i+2}}_\uparrow\underbrace{s_{i+1}\hdots}_\downarrow\hdots,\]
        so clearly again the number of elements in an increasing subword is unchanged under the braid move. So again, $n$ and $m$ are preserved under the braid move.

        \item $\ell(w_2)>1$, and $w_2=s_{i+2}s_i\hdots$, with the dots possibly representing an empty string. Then we have 
        \[w=\hdots\underbrace{\hdots s_{i+2}}_\downarrow\underbrace{s_is_{i+1}}_\uparrow\underbrace{s_is_{i+2}}_\downarrow\underbrace{s_i\hdots}_\uparrow\hdots,\]
        and applying the braid relation we get
        \[v=\hdots\underbrace{\hdots s_{i+2}s_{i+1}s_i}_\downarrow\underbrace{s_{i+1}s_{i+2}s_i\hdots}_\uparrow,\]
        and the number of elements in an increasing subword is unchanged under the braid move. Also, the parity of any end subword is constant. So again, $n$ and $m$ are preserved under the braid move.

        \end{enumerate}
    \end{enumerate}
    This covers all cases, and so the algorithm is constant under braid relations. 
\end{proof}

We now take a set of reduced words that cover the whole group, and prove the algorithm holds on these words. We start by proving that words of the form $\gamma_i(\nu)\delta_{\nu+i+1}(\mu)$ are reduced. We note that each generator has a corresponding root - $s_1$ corresponds to $\beta$, $s_2$ corresponds to $\alpha$, and $s_0$ corresponds to $-\alpha-\beta$. Let $\alpha_i$ be the root corresponding to $s_i$. 

\begin{lemma}\label{beforeturn}
Let
\[
w=s_0s_1s_2s_0s_1s_2\cdots.
\]
Let
\[
\beta_k=s_{i_1}\cdots s_{i_{k-1}}(\alpha_{i_k})
\]
be the $k$-th crossed root of $w$. Then every crossed root of $w$ is positive.
\end{lemma}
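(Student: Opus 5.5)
We compute the crossed roots explicitly using the finite Weyl group action. The linear parts of $s_1,s_2$ are the simple reflections $s_\beta,s_\alpha$. The generator $s_0$ is the affine reflection in $H_{\alpha+\beta,1}$, whose linear part is $s_{\alpha+\beta}$. For affine words the crossed root of position $k$ is an affine root $\pm\gamma+j\delta$, and \emph{positive} means $\gamma>0$ with $j\geq 0$, or $\gamma<0$ with $j>0$. Equivalently, the $k$-th crossed hyperplane separates the identity alcove from the alcove $s_{i_1}\cdots s_{i_k}$, on the side of the positive root. So the plan is to track the affine roots $\alpha_0=\delta-(\alpha+\beta)$, $\alpha_1=\beta$, $\alpha_2=\alpha$ under the word $w=s_0s_1s_2s_0s_1s_2\cdots$.

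\textbf{Step 1.} Compute the action of each generator on affine roots: $s_i(\alpha_i)=-\alpha_i$, and $s_i(\alpha_j)=\alpha_i+\alpha_j$ for $i\neq j$, since every pair has Cartan entry $-1$. Here $\delta=\alpha_0+\alpha_1+\alpha_2$ is fixed by all $s_i$.

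\textbf{Step 2.} Compute the first few crossed roots directly:
\begin{align*}
\beta_1&=\alpha_0,\\
\beta_2&=s_0(\alpha_1)=\alpha_0+\alpha_1,\\
\beta_3&=s_0s_1(\alpha_2)=s_0(\alpha_1+\alpha_2)=2\alpha_0+\alpha_1+\alpha_2=\delta+\alpha_0,\\
\beta_4&=s_0s_1s_2(\alpha_0)=s_0s_1(\alpha_0+\alpha_2)=s_0(\alpha_0+\alpha_1+\alpha_2)=\delta.
\end{align*}
The last line looks wrong, since $\delta$ is imaginary and cannot be a crossed root, so this must be rechecked carefully. Redoing it: $s_1(\alpha_0+\alpha_2)=(\alpha_0+\alpha_1)+(\alpha_1+\alpha_2)=\delta+\alpha_1$, and then $s_0(\delta+\alpha_1)=\delta+\alpha_0+\alpha_1$. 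So $\beta_4=\delta+\alpha_0+\alpha_1$.

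\textbf{Step 3.} Find the pattern and prove it by induction on $k$. Write $c=s_0s_1s_2$. Then $\beta_{3t+r}=c^t(\beta_r)$ for $r=1,2,3$, and the key fact to establish is that $c$ acts on the relevant roots as a translation by a fixed amount. Concretely, the guess from the list is that $c^t(\alpha_0)=t\delta+\alpha_0+\cdots$ has a $\delta$-coefficient growing linearly with $t$ while its finite part stays in a fixed finite set. To make this precise, I would compute $c$ on the basis $\alpha_0,\alpha_1,\alpha_2$ as a $3\times 3$ matrix and check that its square acts on finite parts as the Coxeter element of $A_2$. Then I would show by induction that the finite part of $\beta_k$ cycles through a fixed set while its $\delta$-coefficient satisfies $j\geq 1$ whenever the finite part is negative, and $j\geq 0$ otherwise.

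\textbf{Main obstacle.} The hard step is identifying the right invariant to carry through the induction. The fallback is the geometric version of the argument. By Lemma \ref{tunnelsbeta}, the gallery of type $s_0s_1s_2\cdots$ is exactly the gallery moving upward inside the $\beta$-tunnel containing the identity. Each step crosses a hyperplane in direction $\alpha$ or $\alpha+\beta$. Any such hyperplane, once crossed, is never recrossed, because the tunnel is a strip transverse to both families and the gallery moves monotonically along it. Hence the gallery is minimal, so $w$ is reduced, and every crossed root is positive by the standard characterisation that a word is reduced if and only if all of its crossed roots are positive.
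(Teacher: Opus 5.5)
Your Steps 1--2 are the same as the start of the paper's proof. You use the same relations and get the same $\beta_1,\beta_2,\beta_3$, and your corrected $\beta_4=\delta+\alpha_0+\alpha_1=2\alpha_0+2\alpha_1+\alpha_2$ is the paper's $c(\alpha_0)$.

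Step 3, however, is only a plan, and its guiding claims are wrong:
\begin{itemize}
\item The linear part of $c=s_0s_1s_2$ is $s_{\alpha+\beta}s_\beta s_\alpha=s_\beta$, a reflection. So $c$ is a glide reflection along the tunnel, not a translation. For example, it sends $\beta_1=\delta-(\alpha+\beta)$ to $\beta_4=2\delta-\alpha$, changing the finite part.
\item Consequently $c^2$ acts \emph{trivially} on finite parts, not as a Coxeter element of $A_2$.
\end{itemize}
The invariant you were looking for is that $c^2$ is a translation with $c^2(\gamma+j\delta)=\gamma+(j+3)\delta$ for $\gamma\in\{-\alpha,-(\alpha+\beta)\}$. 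This gives $\beta_{2r-1}=r\delta-(\alpha+\beta)$ and $\beta_{2r}=r\delta-\alpha$. It is exactly what the paper's induction on $h$ via $c^2$ checks: every coefficient rises by $3$.

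Your geometric fallback, by contrast, is a complete and correct proof by a genuinely different route. Rather than computing the crossed roots, you show $w$ is reduced. You then apply the root criterion in the direction ``reduced $\Rightarrow$ all crossed roots positive'', i.e.\ apply $\ell(vs)>\ell(v)\iff v(\alpha_s)>0$ to each prefix. The paper uses the opposite direction in the following corollary.

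The geometry is sound. Each $\alpha$- or $(\alpha+\beta)$-wall meets the $\beta$-strip in exactly one panel. A gallery inside a strip that never steps straight back (consecutive letters of $\gamma_0(n)$ differ) must move monotonically, so no wall is crossed twice. Two details should be stated:
\begin{itemize}
\item Lemma \ref{tunnelsbeta} only gives ``tunnel gallery $\Rightarrow$ type $\gamma_i(n)$''. To identify the gallery of type $\gamma_0(n)$ from $1$ with the upward tunnel gallery, you also need that its first panel is the $s_0$-panel of $1$, and that a gallery from a fixed alcove is determined by its type.
\item Lemma \ref{tunnelsbeta} is proved independently of the present lemma, so there is no circularity.
\end{itemize}

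As for what each route buys: yours is shorter and explains \emph{why} the lemma holds. But it rests on the tunnel lemmas, whose proofs in the paper are partly pictorial, and it proves directly the reducedness that the paper wants to \emph{deduce} from this lemma. The paper's computation is self-contained algebra and yields explicit crossed roots. That is the template it reuses for $\gamma_0(x)\delta_{x+1}(y)$, where a geometric proof would need an extra check that the second leg recrosses none of the walls crossed by the first.
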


\begin{proof}
Let $c=s_0s_1s_2$. We first compute the first three crossed roots, using the formulae
\[
s_i(\alpha_i)=-\alpha_i,
\qquad
s_i(\alpha_j)=\alpha_j+\alpha_i
\quad (i\neq j).
\]
We obtain
\[
\beta_1=\alpha_0,
\]
\[
\beta_2=s_0(\alpha_1)=\alpha_0+\alpha_1,
\]
and
\[
\beta_3=s_0s_1(\alpha_2)
=s_0(\alpha_1+\alpha_2)
=2\alpha_0+\alpha_1+\alpha_2.
\]
Hence, for $m\geq 0$,
\[
\beta_{3m+1}=c^m(\alpha_0),
\]
\[
\beta_{3m+2}=c^m(\alpha_0+\alpha_1),
\]
and
\[
\beta_{3m+3}=c^m(2\alpha_0+\alpha_1+\alpha_2).
\]

We now claim the following formulae. If $m=2h$, then
\[
\beta_{3m+1}=(3h+1)\alpha_0+3h\alpha_1+3h\alpha_2,
\]
\[
\beta_{3m+2}=(3h+1)\alpha_0+(3h+1)\alpha_1+3h\alpha_2,
\]
and
\[
\beta_{3m+3}=(3h+2)\alpha_0+(3h+1)\alpha_1+(3h+1)\alpha_2.
\]
If $m=2h+1$, then
\[
\beta_{3m+1}=(3h+2)\alpha_0+(3h+2)\alpha_1+(3h+1)\alpha_2,
\]
\[
\beta_{3m+2}=(3h+3)\alpha_0+(3h+2)\alpha_1+(3h+2)\alpha_2,
\]
and
\[
\beta_{3m+3}=(3h+3)\alpha_0+(3h+3)\alpha_1+(3h+2)\alpha_2.
\]

We prove these formulae by induction on $h$. For $h=0$, the even case $m=0$ is exactly the computation of $\beta_1,\beta_2,\beta_3$ above. The odd case $m=1$ is obtained by applying $c$ once:
\[
c(\alpha_0)=2\alpha_0+2\alpha_1+\alpha_2,
\]
\[
c(\alpha_0+\alpha_1)=3\alpha_0+2\alpha_1+2\alpha_2,
\]
and
\[
c(2\alpha_0+\alpha_1+\alpha_2)=3\alpha_0+3\alpha_1+2\alpha_2.
\]
Thus the formulae hold for $h=0$.

For the inductive step, suppose the formulae hold for some $h\geq 0$. 
We check that applying $c^2=(s_0s_1s_2)^2$ to any of the displayed roots, using the relations
\[c(\alpha_0)=2\alpha_0+2\alpha_1+\alpha_2,\quad c(\alpha_1)=\alpha_0+\alpha_2,\quad c(\alpha_2)=-2\alpha_0-\alpha_1-\alpha_2,\] give the formulae with $h$ replaced by $h+1$. For example, if $m=2h$,
\begin{align*}c^2(\beta_{3m+1})&=c^2((3h+1)\alpha_0+3h\alpha_1+3h\alpha_2)\\&=c((3h+2)\alpha_0+(3h+2)\alpha_1+(3h+1)\alpha_2)\\&= (3h+4)\alpha_0+(3h+3)\alpha_1+(3h+3)\alpha_2.\end{align*}

Therefore, all crossed roots have nonnegative coefficients, and
so every crossed root is positive. 
\end{proof}

\begin{lemma}
Let
\[
w=\gamma_0(x)\delta_{x+1}(y)=s_0s_1s_2\cdots s_{x-1} s_{x+1}s_{x}\cdots,
\]
where the subscripts are read modulo $3$. Let
\[
\beta_k=s_{i_1}\cdots s_{i_{k-1}}(\alpha_{i_k})
\]
be the $k$-th crossed root of the whole word $w$. Then every crossed root is positive.
\end{lemma}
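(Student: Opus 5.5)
The first $x$ crossed roots $\beta_1,\dots,\beta_x$ are exactly the crossed roots of the prefix $u=\gamma_0(x)$, so they are positive by Lemma \ref{beforeturn}. It therefore remains to treat $\beta_{x+k}$ for $1\le k\le y$. Writing $d=\delta_{x+1}(y)=s_{j_1}s_{j_2}\cdots$, we have
\[\beta_{x+k}=u(\rho_k),\qquad \rho_k=s_{j_1}\cdots s_{j_{k-1}}(\alpha_{j_k}),\]
where $\rho_k$ is the $k$-th crossed root of the decreasing word $d$ on its own.

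The plan is to compute both ingredients in closed form, working in the affine root language. Write $\delta=\alpha_0+\alpha_1+\alpha_2$. Every real root is then $\phi+N\delta$ with $\phi\in\Phi$, and it is positive if and only if $N>0$, or $N=0$ and $\phi$ is a positive combination of $\alpha_1,\alpha_2$.

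\textbf{Step 1 (the decreasing part).} Apply the diagram automorphism $s_i\mapsto s_{-i}$, which sends $\alpha_i\mapsto\alpha_{-i}$ and preserves positivity, composed with a rotation $s_i\mapsto s_{i+t}$. This converts the decreasing word $\delta_{x+1}(y)$ into an increasing word starting at $s_0$. Lemma \ref{beforeturn} then gives explicit formulas for the $\rho_k$, periodic modulo $6$ in $k$. Their $\delta$-coefficient grows like $k/2$, and their finite part cycles through a fixed finite set of roots.

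\textbf{Step 2 (the action of $u$).} Compute $u=\gamma_0(x)$ on $\alpha_0,\alpha_1,\alpha_2$ using $c=s_0s_1s_2$. The formulas $c(\alpha_i)$ listed in the proof of Lemma \ref{beforeturn} show that $c^2$ acts on the finite part as a rotation of order $3$, and $c^6$ acts as a translation. So $u=c^{q}\cdot(\text{prefix of }c\text{ of length }<3)$ acts on $\phi+N\delta$ by a finite Weyl group element $\bar u$ together with a shift of $N$ by $\langle \text{translation part},\phi\rangle$. This shift is linear in $x$ up to a bounded correction. I will split into cases according to $x \bmod 6$ (or $\bmod 3$ if the symmetry allows), which fixes $\bar u$ and the first index $j_1=x+1$.

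\textbf{Step 3 (combining).} Substitute to get
\[\beta_{x+k}=\bar u(\phi_k)+\bigl(N_k+\lambda_x(\phi_k)\bigr)\delta,\]
where $\phi_k$ and $N_k$ are the finite part and $\delta$-coefficient of $\rho_k$, and $\lambda_x(\phi_k)$ is the shift from Step 2. The point is that the shift $\lambda_x(\phi_k)$ should be nonnegative for exactly the finite directions $\phi_k$ occurring in a decreasing word starting with $s_{x+1}$. Geometrically, after climbing the $\beta$-tunnel, the gallery turns into an $(\alpha+\beta)$-tunnel away from the identity, so the walls it crosses are never the ones that $u$ has pushed to the negative side.

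\textbf{Main obstacle.} This sign analysis is where I expect the difficulty. One must check, in each residue case, that the total $\delta$-coefficient is positive, or is zero with $\bar u(\phi_k)$ a positive finite root. This requires keeping track of the boundary values $k=1,2$, where $N_k=0$ and the correction terms of size $\pm1$ could cancel the linear growth. I would first verify the small cases $x\le 5$, $k\le 3$ by direct computation, as was done for $h=0$ in Lemma \ref{beforeturn}. For the general case I would then argue by induction, showing that replacing $x$ by $x+6$ adds a nonnegative multiple of $\delta$, and replacing $k$ by $k+6$ adds $3\delta$ after applying $u$. Both follow from $c^6$ acting as a translation.
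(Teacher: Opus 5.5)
Your overall route is the paper's: the first $x$ crossed roots come from Lemma \ref{beforeturn}, and the post-turn roots are handled by explicit closed forms for each residue class of $x$, checked by induction. Writing $\beta_{x+k}=u(\rho_k)$ and using roots of the form $\phi+N\delta$ (your $\delta=\alpha_0+\alpha_1+\alpha_2$) is a cleaner way to organise the same computation.

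However, Step 2 contains a false claim. Put $\theta=\alpha_1+\alpha_2$. The finite part of $c=s_0s_1s_2$ is $s_\theta s_1s_2=s_1$, which is a reflection. Indeed, $c(\alpha_1)=\delta-\alpha_1$ and $c(\alpha_2)=\alpha_1+\alpha_2-2\delta$. So $c^2$ is already a translation, not a rotation of order $3$:
\[c^2(\alpha_0)=\alpha_0+3\delta,\qquad c^2(\alpha_1)=\alpha_1,\qquad c^2(\alpha_2)=\alpha_2-3\delta.\]
This can also be read off from $\beta_{6h+1}=\alpha_0+3h\delta$ in Lemma \ref{beforeturn}.

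The error is not cosmetic. Since $\gamma_0(x+6)=c^2\gamma_0(x)$ and $\delta_{x+7}(y)=\delta_{x+1}(y)$, your step $x\mapsto x+6$ is $\beta_{x+6+k}=c^2(\beta_{x+k})$, i.e.\ left multiplication by $c^2$, not by $c^6$. Under your stated claim this would rotate the finite part. Then it would not ``add a multiple of $\delta$'', and $x \bmod 6$ would not determine $\bar u$.

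With the correct fact, the step adds $-3b\,\delta$, where $b$ is the $\alpha_2$-coefficient of the finite part of $\beta_{x+k}$. Being a translation does not make this nonnegative. You still have to show that no post-turn crossed root has finite part $\alpha_2$ or $\alpha_1+\alpha_2$.

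The heuristic in Step 3 that $\lambda_x(\phi_k)\ge 0$ is also false. For $x=2$ we have $\rho_1=\alpha_0=\delta-\theta$ and $\beta_3=s_0s_1(\alpha_0)=\alpha_1$, so the shift is $-1$. Positivity there comes only from the total coefficient being $0$ together with $\bar u(\phi_1)=\alpha_1>0$. This example also has $N_1=1$, so your boundary claim $N_1=N_2=0$ does not hold uniformly in $x$. Your ``main obstacle'' paragraph states the right criterion, so you can simply drop the Step 3 heuristic.

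Once these points are corrected, the six base cases $x\le 5$ close the argument. In fact the answer depends only on the parity of $x$: for $r\ge 1$,
\[\beta_{x+2r-1}=(r-1)\delta+\alpha_1,\qquad \beta_{x+2r}=\bigl(r+\tfrac{x}{2}\bigr)\delta-\alpha_2\qquad(x\text{ even}),\]
\[\beta_{x+2r-1}=r\delta-\alpha_1,\qquad \beta_{x+2r}=\bigl(r+\tfrac{x+1}{2}\bigr)\delta-(\alpha_1+\alpha_2)\qquad(x\text{ odd}).\]
All of these are visibly positive, and the even case reproduces the paper's displayed formula for $x\equiv 0\pmod 6$. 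The finite parts that occur are $\alpha_1,-\alpha_2$ for even $x$ and $-\alpha_1,-\theta$ for odd $x$. So the $c^2$-shift is always $0$ or $+3$, which is exactly what your induction on $x$ needs.
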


\begin{proof}
    For $k\leq x$, by Lemma \ref{beforeturn}, we have that $\beta_k$ is a positive root. Now we can obtain formulae for the rest of the roots, as in the proof above. For instance, when $x\equiv 0\pmod 6$, if $k=x+2r-1$, then
\[
\beta_{k}
=
(r-1)\alpha_0+r\alpha_1+(r-1)\alpha_2,
\]
and if $k=x+2r$, then
\[
\beta_{k}
=
(r+\dfrac{x}{2})\alpha_0+(r+\dfrac{x}{2})\alpha_1+(r+\dfrac{x}{2}-1)\alpha_2.
\]
These are positive roots. The other roots have similar formulae, and are proved by induction. 
\end{proof}

\begin{corollary}
    Any word of the form $\gamma_i(x)\delta_{x+i+1}(y)$ is reduced. 
\end{corollary}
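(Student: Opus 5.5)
The corollary should follow from the preceding lemma by symmetry. I will reduce the general index $i$ to the case $i=0$ already treated, and then invoke the standard criterion that a word is reduced exactly when all its crossed roots are positive.

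First I fix the criterion. For a word $s_{i_1}\cdots s_{i_k}$, the element $s_{i_1}\cdots s_{i_{r-1}}(\alpha_{i_r})$ is negative precisely when there is a descent at step $r$, that is, when $\ell(s_{i_1}\cdots s_{i_r})<\ell(s_{i_1}\cdots s_{i_{r-1}})$. Hence the word is reduced if and only if every crossed root $\beta_1,\dots,\beta_k$ is positive. Here the roots are affine roots, written with coefficients in $\alpha_0,\alpha_1,\alpha_2$, as in Lemma \ref{beforeturn}. So for $i=0$ the preceding lemma gives directly that $\gamma_0(x)\delta_{x+1}(y)$ is reduced.

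Next I reduce general $i$ to $i=0$. The Coxeter presentation of $\aw$ is invariant under the cyclic permutation $\sigma\colon s_j\mapsto s_{j+1}$ (indices mod $3$), since all three braid relations have the same order $3$. Thus $\sigma$ extends to a group automorphism, and it preserves the length function because it maps words to words of the same length bijectively. Applying $\sigma^i$ to $\gamma_0(x)\delta_{x+1}(y)$ adds $i$ to every subscript. An increasing word stays increasing and a decreasing word stays decreasing, so the image is exactly $\gamma_i(x)\delta_{x+i+1}(y)$. A reduced word maps to a reduced word under a length-preserving automorphism, which proves the corollary. Equivalently, $\sigma^i$ permutes the simple roots $\alpha_j\mapsto\alpha_{j+i}$ and so maps positive roots to positive roots. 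Each crossed root of the new word is therefore the image of the corresponding crossed root of the old word, and is positive.

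The only step that needs real care is the input from the preceding lemma, whose proof only displays the case $x\equiv 0\pmod 6$. If I had to supply the rest, I would split by $x \bmod 6$. For each residue I would write closed forms for $\beta_{x+2r-1}$ and $\beta_{x+2r}$ by applying $s_{x+1}s_x$ repeatedly to $\beta_x$, using $s_j(\alpha_k)=\alpha_k+\alpha_j$ for $j\neq k$, and verify that all coefficients stay nonnegative by induction on $r$. That case bookkeeping is the main obstacle. The symmetry argument itself is immediate.
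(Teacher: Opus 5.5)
Your proposal is correct and follows the paper's proof. Both reduce to $i=0$, invoke the preceding lemma that every crossed root of $\gamma_0(x)\delta_{x+1}(y)$ is positive, and conclude by the root criterion for reduced words; your cyclic automorphism $s_j\mapsto s_{j+1}$ makes explicit what the paper leaves as ``without loss of generality''.
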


\begin{proof}
    We can assume, without loss of generality, that $i=0$. Then, by the lemmas above, if $w=\gamma_i(x)\delta_{x+i+1}(y)$, and $\beta_k$ is the $k$-th crossed root of $w$, then $\beta_k$ is positive for all $k$. So, by the root criterion for reduced words \cite[Chapter 5.4]{HUMPHREYS2}, $w$ must be a reduced word. 
\end{proof}

We now show that our algorithm outputs the correct coordinates for the set of words in this form. 

\begin{proposition}
    
\label{correctcoords}
    The \hyperref[algorithm]{Words-to-Coordinates Algorithm} outputs the correct region and coordinates for the reduced words 
    \[s_0s_1s_2\cdots s_{\nu-1}s_{\nu+1}s_{\nu+2}s_{\nu}\cdots=\gamma_0(\nu)\delta_{\nu+1}(\mu),\]
    \[s_1s_2s_0s_1s_2s_0\cdots s_{\nu}s_{\nu+2}s_{\nu+1}s_{\nu}s_{\nu+2}\cdots=\gamma_1(\nu)\delta_{\nu+2}(\mu),\]
    \[s_2s_0s_1s_2s_0\cdots s_{\nu+1}s_{\nu}s_{\nu+2}s_{\nu+1}s_{\nu}\cdots=\gamma_2(\nu)\delta_{\nu}(\mu),\]
    with $\nu>0$. 
\end{proposition}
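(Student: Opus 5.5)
The plan is to treat the three families separately: the family starting with $s_0$ is handled directly from the definition of the coordinates, and the families starting with $s_1$ and $s_2$ are reduced to it by the rotational symmetry of the complex.

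\emph{First family.} Write $w=\gamma_0(\nu)\delta_{\nu+1}(\mu)$ with $\nu>0$ and $\mu\ge 0$, and run the algorithm. The maximal decreasing prefix $w_1$ is $s_0$, except when $\nu=1$, where $w_1$ is the whole word $s_0s_2s_1\cdots$ and $w_2$ is empty. In both cases the first branch of step 1 does not apply (either $\ell(w_1)$ is odd or $\ell(w_2)=0$), so $j=i=0$. The decomposition is then $\gamma_0(\nu)\delta_{\nu+1}(\mu)$ (just $\gamma_0(\nu)$ if $\mu=0$), which gives $u=\nu$.
\begin{itemize}
\item If $\mu>0$, the word ends in a decreasing subword, so $(x,y)=(\nu,\mu)$. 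The output is $(\nu,\mu)$ when $\nu$ is even, and $(\nu+\mu+1-\epsilon_\mu,-\mu-1)$ when $\nu$ is odd.
\item If $\mu=0$ and $\nu$ is odd, the word ends in an odd increasing subword, so $(x,y)=(\nu+1,-1)$ and the output is $(\nu+1,-1)$.
\end{itemize}

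Next compare with geometry. For $\nu$ even, $\gamma_0(\nu)\delta_{\nu+1}(\mu)$ is by definition the gallery of the coordinates $(\nu,\mu)$, so the output is correct. For $\nu$ odd, the prefix $\gamma_0(\nu)$ travels $\nu$ alcoves up the $\beta$-tunnel of the identity and ends in an upwards facing alcove. By Lemma \ref{tunnelsalpha}, the continuation of decreasing type is a move up and to the left in an $\alpha$-tunnel, which is the positive direction of the odd notation. So the alcove has odd coordinates $(\nu,\mu)$, and its even coordinates are given by the involution lemma; this is exactly the algorithm's output. The edge case $\mu=0$ agrees with $(\nu,0)\mapsto(\nu+1,-1)$. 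Since $\nu>0$ and $m\ge -n$ in every case, the output lies in the $s_0$-region, so the region is correct too.

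\emph{Other two families.} Let $\rho$ be the rotation by $2\pi/3$ about the centre of the identity alcove. It fixes the identity alcove and induces the diagram automorphism $s_i\mapsto s_{i+1}$. Hence $\rho$ sends the gallery of $\gamma_0(\nu)\delta_{\nu+1}(\mu)$ to the gallery of $\gamma_1(\nu)\delta_{\nu+2}(\mu)$, and $\rho^2$ sends it to $\gamma_2(\nu)\delta_{\nu}(\mu)$. The $s_0$-region is mapped to the $s_1$- and $s_2$-regions.

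I would compute the action of $\rho$ on coordinates as follows.
\begin{itemize}
\item Rotating the $\beta$-tunnel through the identity gives the $\alpha$-tunnel (respectively the $(\alpha+\beta)$-tunnel) through the identity.
\item Using Lemmas \ref{tunnels}, \ref{tunnelsbeta} and \ref{tunnelsalpha}, track where the rotated gallery $(x,y)$ lands: first the image of the $\beta$-leg, then the image of the second leg.
\item Re-express the endpoint in even and odd coordinates, splitting by the parities of $x$ and $y$ exactly as in the proof of the involution lemma, i.e.\ by reading off lengths along the two legs of the picture.
\end{itemize}
The target is to obtain $\rho(x,y)=(-x-y+\epsilon_y,\,x-\epsilon_y)$ for $x$ even and $(1-x,\,x+y)$ for $x$ odd, and similarly for $\rho^2$ the two $j=2$ rows.

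On the algorithmic side, relabelling indices by $+1$ turns the run of the algorithm on the $s_0$-word into the run on the $s_1$-word: $j=1$ and the same $(x,y)$ are produced. The $s_2$-family works the same way. So the algorithm's output is the $j=1$ (or $j=2$) row of the final step evaluated at the same $(x,y)$, and correctness of coordinates and regions follows.

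The main obstacle is the geometric rotation computation: deriving the coordinate formulas for $\rho$ and $\rho^2$ with all the parity and $\epsilon$ corrections. This includes the boundary cases $y=0$ and $y=-1$, where the odd-increasing-end adjustment of the algorithm interacts with the involution. I would first do the generic case of $x$ and $y$ both even from a picture, as in the involution lemma, and then derive the remaining parities by moving one step along the second leg, as was done there for odd $m$.
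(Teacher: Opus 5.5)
Your proposal is correct. For the first family it is essentially the paper's argument. The paper also reads off $j=0$ and $(x,y)$ from the decomposition, and for $\nu$ odd it draws the same trapezium picture that underlies the involution lemma. You package that picture as ``odd coordinates $(\nu,\mu)$ plus the involution''. Your identification of the $\delta_{\nu+1}(\mu)$-leg with the up-left $\alpha$-move is right: the $s_{\nu+1}$-panel is the left edge of the upward alcove $\gamma_0(\nu)$, and Lemma~\ref{tunnelsalpha} then forces the type.

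The genuine difference is in the $s_1$- and $s_2$-families. The paper dismisses these as ``a very similar geometric argument'', meaning the pictures are redrawn for each family and parity. You instead use the rotation $\rho$ (the diagram automorphism $s_i\mapsto s_{i+1}$) together with the index-shift equivariance of the algorithm. This reduces everything to one claim: $\rho$ acts on coordinates by the $j=1$ rows.

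What your route buys:
\begin{itemize}
\item It explains the shape of the final table. The $j=0$ rows are the identity, with the $x$-odd row being the involution, and the $j=1$, $j=2$ rows are $\rho$ and $\rho^2$.
\item The casework shrinks more than you noticed. The two $j=1$ rows are carried to each other by the involution, so only one needs checking.
\item Composing the $j=1$ map with itself gives exactly the $j=2$ rows. For $x$ even, $\rho(x,y)=(-x-y+\epsilon_y,\,x-\epsilon_y)$ has even first coordinate and $\epsilon_{x-\epsilon_y}=\epsilon_y$, so applying $\rho$ again yields $(y+\epsilon_y,\,-x-y)$. Hence $\rho^2$ needs no separate geometry.
\end{itemize}

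The step you flag as the main obstacle can also avoid parity pictures if you work with tunnel indices.
\begin{itemize}
\item In even coordinates an alcove lies in the $(\alpha+\beta)$-, $\beta$- and $\alpha$-tunnels with indices $a=n$, $b=m+\epsilon_m$, $c=n+m-\epsilon_m$. These determine it, since $n=a$ and $m=(b+c-a)/2$.
\item $\rho$ is an isometry fixing the identity alcove, and it permutes the three parallel families cyclically, $(\alpha+\beta)\to\beta\to\alpha\to(\alpha+\beta)$. It therefore acts by $(a,b,c)\mapsto(-c,a,-b)$.
\item The signs are fixed by $\rho(s_0s_1)=s_1s_2$, i.e.\ $(2,0)\mapsto(-2,2)$, and $\rho(s_1)=s_2$, i.e.\ $(0,1)\mapsto(0,-1)$.
\item This gives $\rho(n,m)=(-n-m+\epsilon_m,\,n-\epsilon_m)$ directly.
\end{itemize}

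Finally, the region statement for the other two families, which you only assert, is then a short inequality check on these formulas.
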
 

Before proving this lemma, we first state some basic Euclidean geometry facts of this tiling. 

\begin{lemma} \label{triangles}
    All trapeziums formed from equilateral triangles have angles $\pi/3,\pi/3,2\pi/3,$ and $2\pi/3$. Given any trapezium with angles $\pi/3,\pi/3,2\pi/3,$ and $2\pi/3$, 
\begin{center}

\begin{tikzpicture}[scale=0.8]
    \coordinate (A) at (0, 0);
    \coordinate (B) at (6, 0);
    \coordinate (C) at (5, 1.732); 
    \coordinate (D) at (1, 1.732); 

    \draw[line width=0.05cm] 
        (A) -- (B) -- (C) -- (D) -- cycle;
    \tikzset{every node/.style={ font=\large\itshape}}

    \path (A) -- (D) node[midway, left=0.2cm] {w};
    \path (D) -- (C) node[midway, above=0.1cm] {x};
    \path (C) -- (B) node[midway, right=0.2cm] {y};
    \path (A) -- (B) node[midway, below=0.2cm] {z};
\end{tikzpicture}
\end{center}
with lengths $x,y,w,z$, we have
    \[w=y, \text{ and } z=x+w=x+y.\]
    Furthermore, all triangles formed from equilateral triangles are equilateral.
\end{lemma}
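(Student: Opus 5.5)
We prove Lemma \ref{triangles} with elementary Euclidean geometry in the plane tiled by unit equilateral triangles. The approach is to decompose the trapezium into one parallelogram and one equilateral triangle, and read off the side lengths.

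First, the angle claim. Every edge of the tiling is parallel to one of three directions at mutual angles $\pi/3$. So any polygon whose sides run along edges of the tiling has interior angles in $\{\pi/3,2\pi/3,\pi,4\pi/3,5\pi/3\}$. For a trapezium, two sides are parallel, so co-interior angles along each leg sum to $\pi$. The only admissible convex pairs summing to $\pi$ are $\{\pi/3,2\pi/3\}$, since $\pi/2$ does not occur. A non-degenerate trapezium therefore has angles $\pi/3,2\pi/3$ on each leg. Which endpoint of the long side $z$ carries the acute angle is forced by the legs meeting the longer base. Hence the angles are $\pi/3,\pi/3,2\pi/3,2\pi/3$, with the two acute angles at the ends of $z$.

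Next, the side lengths. Label the vertices $A,B,C,D$ as in the picture, with $AB=z$, $BC=y$, $CD=x$ and $DA=w$. Through $C$ draw the line parallel to $DA$, meeting $AB$ at a point $E$. Then $AECD$ is a parallelogram, so $AE=x$ and $EC=w$. The triangle $EBC$ has angle $\pi/3$ at $B$. Its angle at $E$ equals the angle $\angle DAB=\pi/3$, because corresponding angles between parallel lines are equal. The triangle $EBC$ is therefore equilateral, which gives $EB=BC=y$ and $EC=y$. Combining this with $EC=w$ yields $w=y$. Then
\[z=AE+EB=x+y=x+w.\]
The degenerate case $x=0$ is the equilateral triangle itself, which is consistent with the formula.

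Finally, for the last claim, take a triangle whose sides lie along edges of the tiling. Its three angles lie in $\{\pi/3,2\pi/3\}$ and sum to $\pi$, so all three angles equal $\pi/3$ and the triangle is equilateral.

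The only subtle step is justifying that the acute angles sit on the longer base, so that the configuration is the one drawn. I would handle this by observing that, with $z$ the longer parallel side, the point $E$ lies inside the segment $AB$ exactly when $z\ge x$; the rest of the argument is routine.
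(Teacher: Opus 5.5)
The paper gives no proof of this lemma. It is introduced as one of ``some basic Euclidean geometry facts of this tiling'' and is then used directly in the picture-based coordinate computations. Your argument is correct and supplies the omitted verification:
\begin{itemize}
\item the co-interior angle count giving the multiset $\{\pi/3,\pi/3,2\pi/3,2\pi/3\}$;
\item the split into the parallelogram $AECD$ and the triangle $EBC$, which is equilateral because it has two angles $\pi/3$;
\item the angle-sum argument showing that triangles built from tiles are equilateral.
\end{itemize}
Reading ``formed from equilateral triangles'' as ``sides along edges of the tiling'' is the right generality, since a union of tiles has such a boundary.

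The one step to tighten is where the acute angles sit. If ``trapezium'' is read inclusively, a parallelogram made of tiles has angles $\pi/3,2\pi/3,\pi/3,2\pi/3$. The first claim still holds for it, but $z=x$, so $z=x+y$ fails. Your sentence about the angles being ``forced by the legs meeting the longer base'' should say this explicitly. If the acute angles on the two legs lay at opposite bases, both legs would make the same angle with $AB$ and would be parallel, giving a parallelogram with $z=x$. So when $z>x$, both base angles at $A$ and $B$ equal $\pi/3$.

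Once that is stated, your closing remark about $E$ is not needed. It is true that $E$ lies in $AB$ exactly when $z\ge x$, but that does not show where the acute angles are. With both base angles equal to $\pi/3$, the configuration is exactly the drawn one, and your parallelogram-plus-triangle argument goes through as written.

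A slightly quicker route is to compare heights and horizontal projections. The heights give $w\sin(\pi/3)=y\sin(\pi/3)$, so $w=y$. The horizontal projections give $z=w/2+x+y/2=x+y$.
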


\begin{proof}\textit{(of Proposition \ref{correctcoords})}
    We show that the algorithm outputs the correct coordinates for these words. Let us show that, if the word $w$ is of the form 
    \[\gamma_0(\nu)\delta_{\nu+1}(\mu)=s_0s_1s_2\hdots s_is_{i+2}s_{i+1}s_i\hdots,\]
    then our algorithm outputs the correct values of $n$ and $m$. These words cover the $s_0$ region, so we need to show that the algorithm outputs the $s_0$-region. If $\nu=1$, then the whole word is decreasing, so $w=\delta_0(\mu+1)$. Here, the algorithm outputs that the element lies in the $s_0$-region. Otherwise, the largest starting decreasing subword is $s_0$. So the algorithm correctly identifies that this element is in the $s_0$-region. 
    
    Now, let us assume that $\mu=0$. So our word ends in an increasing subword. Then we are moving in a straight line in the $\beta$-tunnel containing the origin. If $\nu$ is even, the algorithm outputs the coordinates $(\nu,0)$. If $\nu$ is odd, the algorithm outputs the coordinates $(\nu+1,-1)$. Both of these are the correct coordinates for the corresponding group element. 
    
    Now we can assume that $\mu\neq 0$. So our word does not end in an increasing subword. Now our algorithm outputs, for $x$, the number of elements in the beginning increasing subword, and outputs, for $y$, the number of elements in the ending decreasing subword. When $x$ is even, the algorithm gives $(n,m)=(x,y)$. This is exactly the definition of the coordinate system.
Now if $x$ is odd, our algorithm gives $(n,m)=(x+y+1-\epsilon_y,-y-1)$. So we need to show that our word $s_0s_1s_2\hdots s_is_{i+2}s_{i+1}s_i\hdots$ has coordinates $(x+y+1-\epsilon_y,-y-1)$. First assume that $y$ is even. We consider the following picture:
\begin{center}
\resizebox{0.5\textwidth}{!}{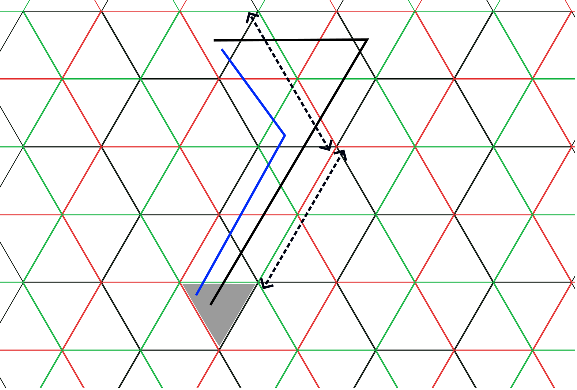}

\end{center}

We can clearly see here that $n=x+y+1$ and $m=-y-1$. If now $y$ is odd, going back one alcove, to $\nu=x$ and $\mu=y-1$ gives us the case above. So then this previous alcove has coordinates $(x+y,-y-2)$. Then to move to the alcove with $\nu=x$ and $\mu=y$, the first coordinate stays the same, and the second coordinate increases by one. So the coordinates are $(n,m)=(x+y,-y-1)$. So overall, for the $s_0$-region, the coordinates are
\[(n,m)=(x+y+1-\epsilon_y,-y-1).\]

The other words and regions follow from a very similar geometric argument.
\end{proof}

\begin{lemma}\label{cover}
Every non-identity element $w\in \aw$ can be written as one of
\[
\gamma_0(\nu)\delta_{\nu+1}(\mu),\qquad
\gamma_1(\nu)\delta_{\nu+2}(\mu),\qquad
\gamma_2(\nu)\delta_{\nu}(\mu),
\]
with $\nu>0$ and $\mu\geq 0$.
\end{lemma}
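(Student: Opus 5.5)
The plan is to argue geometrically, via a bijection with the coordinate system of Section \ref{exploring}, and to use the three regions to organise cases. By Proposition \ref{correctcoords} and the Corollary preceding it, each word $\gamma_i(\nu)\delta_{\nu+i+1}(\mu)$ with $\nu>0$, $\mu\geq 0$ is reduced, and the gallery it describes starts in the identity alcove. By Lemma \ref{tunnelsbeta} (and its analogues for the other two starting panels), this gallery first travels $\nu$ alcoves straight along a tunnel through the identity, in the direction determined by the first panel $s_i$. It then turns and travels $\mu$ alcoves along a tunnel of a second direction, by Lemma \ref{tunnels} or Lemma \ref{tunnelsalpha}. So it suffices to show that every non-identity alcove is reached by such an \emph{L-shaped} gallery for some choice of $i$, $\nu$ and $\mu$.

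First I treat $i=0$. The identity alcove has a unique $s_0$-panel, and crossing it starts the $\beta$-tunnel through the origin going upwards. For $\nu=n$ even and $\mu=m\geq 0$, the word $\gamma_0(n)\delta_{n+1}(m)$ is exactly the even-coordinate description of $(n,m)$. Hence these words realise every alcove $(n,m)$ with $n>0$ even and $m\geq 0$. Using the odd notation of Section \ref{involution} and the involution lemma, the words with $\nu$ odd realise the alcoves whose odd coordinates $(\nu,\cdot)$ turn into the $\alpha$-tunnel. Proposition \ref{correctcoords} already computes the even coordinates of these words, so I do not need to redraw pictures. I would then check, by inverting the case formulas of step 6 of the Algorithm with $j=0$, that as $\nu>0$ and $\mu\geq 0$ vary the outputs cover exactly the $s_0$-region $\{(n,m)\mid n>0, m\geq -n\}$.

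Next I would apply the same reasoning to $i=1$ and $i=2$. By Remark \ref{typepreserved}, together with the threefold rotational symmetry of the tiling about the identity alcove, which cyclically permutes panel types, these families are the rotations of the $i=0$ family. They should therefore cover the $s_1$-region and the $s_2$-region respectively. Concretely, I would verify that the $j=1$ and $j=2$ formulas in step 6 of the Algorithm map $\{x>0,\ y\geq0\}$ onto those regions. Since the three regions together with $(0,0)$ partition the complex, every non-identity $w$ is one of these words.

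The main obstacle is the bijectivity bookkeeping at region boundaries. The parity cases in step 6 (involving $\epsilon_y$) must be inverted, and I have to confirm that the surfaces $m=-n$ and $n=0$ are hit by exactly the intended family, with nothing missed. I would handle this by solving explicitly for $(x,y)$ in terms of $(n,m)$ in each parity case and checking that $x\geq1$ and $y\geq 0$. An alternative I would keep in reserve is a purely geometric argument: from an alcove $w$, draw the tunnel through $w$ in the appropriate direction back to where it meets the corresponding tunnel through the identity, and choose the sector of the identity containing $w$ to decide $i$.
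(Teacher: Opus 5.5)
Your proposal is correct and follows essentially the paper's argument. The paper also takes Proposition~\ref{correctcoords} as giving the true coordinates of every word $\gamma_i(\nu)\delta_{\nu+i+1}(\mu)$, and then proves surjectivity onto all even-coordinate pairs $(n,m)\neq(0,0)$ by exhibiting explicit preimages region by region; its Cases 1--7 amount to the inversions of the step~6 formulas you plan to carry out, so the rotational-symmetry remark is not needed. One bookkeeping point: for $\mu=0$ and $\nu$ odd, steps 1--5 actually return $(x,y)=(\nu+1,-1)$ rather than $(\nu,0)$, but step~6 gives the same $(n,m)$ either way, so inverting with $(x,y)=(\nu,\mu)$, $\nu\geq 1$, $\mu\geq 0$ is sound.
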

\begin{proof}
    Let 
    \[S=\{\gamma_0(\nu)\delta_{\nu+1}(\mu), \gamma_1(\nu)\delta_{\nu+2}(\mu), \gamma_2(\nu)\delta_{\nu}(\mu)\mid \nu>0, \mu\geq 0\}.\]
    Every nonidentity element of $W$ can be represented as a coordinate $(n,m)$, with $n$ even and $n,m$ not both zero. We showed, in Proposition \ref{correctcoords}, that the algorithm outputs the correct coordinates for elements in $S$.  So we just need to show that the map defined by the algorithm on $S$ to the set of $(n,m)$, $n,m\in \mathbb{Z}$, $n$ even, $n,m$ not both zero is surjective. 

    So consider an arbitrary $(n,m)$ with $n$ even and $n,m$ not both zero. 
    
    Case 1: $n>0$ and $m\geq 0$.

    Take $\gamma_0(n)\delta_{n+1}(m)\in S$. Then our algorithm outputs $j=0$, and $u=n$. Note that, even if $m=0$, our expression never ends with an odd increasing sequence, so $x=n$, $y=m$. Now $x$ is even, and so our algorithm gives the coordinates $(x,y)=(n,m)$.

    Case 2: $n>0$ and $-n\leq m<0$. 

    Note that $n+m-\epsilon_m+1>0$, and $-m-1\geq 0$. First suppose that $-m-1=0$, so $m=-1$. Here, take $\gamma_0(n-1)$. Then $j=1$ and $u=n-1$, and $n-1$ is odd so $x=n$, $y=-1$. Now $x$ is even, so the coordinates are $(x,y)=(n,-1)=(n,m)$. Now suppose that $-m-1>0$. Here, take $\gamma_0(n+m-\epsilon_m+1)\delta_{n+m-\epsilon_m+1}(-m-1)\in S$. Now $j=0$ and $x=n+m-\epsilon_m+1$ and $y=-m-1$. So $x$ is odd, and we get the coordinates $(x+y+1-\epsilon_y,-y-1)=(n+m-\epsilon_m+1+(-m-1)+1-\epsilon_{-m-1},-(-m-1)-1)=(n,m)$.

    Case 3: $n>0$ and $m<-n$. Take $\gamma_2(-m-n+\epsilon_m)\delta_{-m-n+\epsilon_m}(n-\epsilon_m)$ to give coordinates $(n,m)$.

    Case 4: $n\leq 0$, $0<m\leq -n$. Take $\gamma_1(m+\epsilon_m)\delta_{m+\epsilon_m+2}(-n-m)$ to give coordinates $(n,m)$. 

    Case 5:  $n\leq 0$, $m> -n$. Take $\gamma_1(1-n)\delta_{1-n}(m+n-1)$.

    Case 6: $n=0$, $m< 0$. Take $\gamma_2(-m)$.

    Case 7: $n< 0$, $m\leq 0$. Take $\gamma_2(-m+1-\epsilon_m)\delta _{m+1-\epsilon_m}(-n)$.   
\end{proof}

This concludes our proof of the following theorem,  as all reduced words representing the same group element only differ by a series of braid relations. 

\begin{theorem}
    The \hyperref[algorithm]{Words-to-Coordinates Algorithm} is well-defined and correct, i.e.\ for any element $w\in \aw$, and any reduced word $s_{i_1}\cdots s_{i_k}$ representing $w$, the algorithm outputs the correct coordinates. 
\end{theorem}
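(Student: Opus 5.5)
The theorem follows by combining three facts already established: invariance under braid moves, correctness on a family of representative words, and the fact that this family covers the whole group. The bridge is Matsumoto's theorem (the word property for Coxeter groups): any two reduced words for the same element $w\in\aw$ are connected by a finite sequence of braid moves. In $\aw$ every $m_{ij}=3$, so the only moves needed are $s_is_js_i\leftrightarrow s_js_is_j$, which are exactly the moves treated in Proposition \ref{regions} and Proposition \ref{regionslemma}. Applying the algorithm to a reduced word never leaves the set of reduced words, since braid moves preserve reducedness. Hence the algorithm's output is a function of the group element alone, which is the well-definedness claim.

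The argument runs as follows.

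First, for the identity the only reduced word is the empty word, which the algorithm sends to $(0,0)$. That is correct by definition.

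Second, fix a non-identity $w$ and an arbitrary reduced word $s_{i_1}\cdots s_{i_k}$ for it. By Lemma \ref{cover}, $w$ also has a reduced expression $w'$ of one of the forms $\gamma_0(\nu)\delta_{\nu+1}(\mu)$, $\gamma_1(\nu)\delta_{\nu+2}(\mu)$ or $\gamma_2(\nu)\delta_{\nu}(\mu)$ with $\nu>0$ and $\mu\ge0$. Its reducedness comes from the corollary on crossed roots, after cyclically relabelling the indices to reduce to $i=0$.

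Third, Matsumoto gives a chain of reduced words from $s_{i_1}\cdots s_{i_k}$ to $w'$, each step a single braid move. Proposition \ref{regionslemma}, together with Proposition \ref{regions} for the region index $j$ that enters step 6 of the algorithm, shows the output $(n,m)$ is unchanged at every step. So the output on $s_{i_1}\cdots s_{i_k}$ equals the output on $w'$.

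Finally, Proposition \ref{correctcoords} says the output on $w'$ is the correct coordinate pair of the alcove $w$, so the original word also receives the correct coordinates.

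The main obstacle is not this assembly but the completeness of the case analysis it relies on. Proposition \ref{regionslemma} treats the braid move with the left-hand side $s_is_{i+1}s_i$ and handles the symmetric direction $s_{i+1}s_is_{i+1}\to s_is_{i+1}s_i$ by symmetry. I would check that "without loss of generality" by noting that each case gives equality of outputs, so running it in reverse covers the opposite move. I would also check that the pairs $\{i,i+2\}$ are included by relabelling, since $s_{i+2}s_is_{i+2}$ has the form $s_js_{j+1}s_j$ with $j=i+2$.

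A second point to confirm is that Lemma \ref{cover} really reaches every element. Its surjectivity argument depends on Proposition \ref{correctcoords} for all three families, of which only $\gamma_0$ is drawn explicitly. I would complete the $\gamma_1$ and $\gamma_2$ families by the same trapezium computations with Lemma \ref{triangles}, or by applying the rotational symmetry of the complex that cyclically permutes the panel types. With these checks in place the theorem follows directly.
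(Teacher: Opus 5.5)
Your proposal is correct and is essentially the paper's own argument. The paper likewise deduces the theorem from four ingredients: braid invariance (Propositions \ref{regions} and \ref{regionslemma}); correctness on the representative words (Proposition \ref{correctcoords}); the fact that these words cover the group (Lemma \ref{cover}); and the fact that any two reduced words for the same element differ by braid moves. Your caveat about the $\gamma_1$ and $\gamma_2$ families is fair, since the paper also leaves those cases to ``a very similar geometric argument''.
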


\section{Exploring the geometry of $\aw$}\label{sect5}

We now use this coordinate system to explore the structure of $\aw$. We see how the tunnels are completely defined by the coordinates. We also see that the coordinates distinguish the panel arrangement for each alcove. Lastly, we introduce the new notation of channels, and again prove that these can be defined using the coordinate system. 

\subsection{Tunnel inclusion criteria}
We can now use our algorithm to completely describe the tunnels using this notation. By construction, the $(\alpha+\beta)$-tunnels are defined by fixing the first coordinate.

Now the other two sets of tunnels are defined by fixing $m+\epsilon_m$ and $n+m-\epsilon_m$ respectively, where $\epsilon_m=1$ if $m$ is odd and $\epsilon_m=0$ if $m$ is even.

\begin{lemma}\label{betatunnels}
    Two alcoves $(n_1,m_1)$ and $(n_2,m_2)$ lie in the same $\beta$-tunnel if and only if 
    \[m_1+\epsilon_{m_1}=m_2+\epsilon_{m_2}.\]
\end{lemma}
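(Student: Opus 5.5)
The plan is to work in the even-coordinate notation first, where the geometry of the coordinate system is explicit, and then transfer to odd coordinates using the involution. Fix an alcove $(n,m)$ with $n$ even. By construction, $(n,m)$ is obtained from $(n,0)$ by moving $|m|$ steps along the $(\alpha+\beta)$-tunnel with first coordinate $n$. The alcove $(n,0)$ itself lies in the $\beta$-tunnel through the identity, because its defining gallery $\gamma_0(n)$ or $\delta_2(-n)$ stays in that tunnel.

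The first key step concerns this horizontal walk. Its type is $\delta_i(m)$ or $\gamma_i(-m)$ by Lemma \ref{tunnels}. I will show that the $\beta$-tunnel changes exactly at every second step. Concretely, I will check:
\begin{itemize}
\item stepping from $(n,2k)$ to $(n,2k+1)$ stays in the same $\beta$-tunnel;
\item stepping from $(n,2k+1)$ to $(n,2k+2)$ crosses a $\beta$-hyperplane $H_{\beta,\cdot}$, and the index of that hyperplane moves monotonically in $m$;
\item the analogous statements hold for negative $m$, reading $(n,-2k)\to(n,-2k-1)$ as crossing a $\beta$-wall and $(n,-2k-1)\to(n,-2k-2)$ as staying put.
\end{itemize}

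I will verify this in the $(\alpha+\beta)$-tunnel through the identity, using the panel colours: the panel types crossed alternate and cycle, and every second panel crossed is parallel to $\beta$. The result then transfers to every other $(\alpha+\beta)$-tunnel. The reason is that each such tunnel is the image of the origin tunnel under an element of $W$, namely a product of reflections in $\alpha+\beta$-walls, and Remark \ref{typepreserved} says such isometries preserve panel types and send $\beta$-walls to $\beta$-walls. The transferring reflections are parallel translations along $\beta$ or reflections in walls perpendicular to $\alpha+\beta$, so the direction of increasing $m$ is preserved.

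From this, the $\beta$-tunnel of $(n,m)$ is determined by the quantity $\lceil m/2\rceil$, which equals $(m+\epsilon_m)/2$. It is independent of $n$, since all the $(n,0)$ lie in a common $\beta$-tunnel. It is also injective in that value, because distinct values are separated by distinct parallel $\beta$-walls.

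Hence, for even first coordinates, $(n_1,m_1)$ and $(n_2,m_2)$ share a $\beta$-tunnel if and only if $m_1+\epsilon_{m_1}=m_2+\epsilon_{m_2}$. The "only if" direction uses that different values give different strips, which follows from the strict monotonicity above.

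Next I handle odd first coordinates via the involution $(n,m)\mapsto(n+m+1-\epsilon_m,-m-1)$. Put $m'=-m-1$. I need to check that $m'+\epsilon_{m'}$ is determined by $m+\epsilon_m$ compatibly, so that the criterion is stated consistently in either notation. If $m=2k$, then $m'=-2k-1$, which is odd, so $m'+\epsilon_{m'}=-2k$. If $m=2k-1$, then $m'=-2k$, so $m'+\epsilon_{m'}=-2k$. So in both cases $m'+\epsilon_{m'}=-(m+\epsilon_m)$.

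This gives an injective relabelling of $\beta$-tunnels. Comparing two odd-notation alcoves is therefore equivalent to comparing their even forms. For mixed notations, I would note that the paper's convention identifies each pair with its own notation. The geometric description of odd coordinates, moving along $\alpha$-tunnels from $(n,0)$, gives the same pattern directly: an $\alpha$-tunnel walk also alternates between $\beta$-walls and $(\alpha+\beta)$-walls. So I could alternatively repeat the first argument with Lemma \ref{tunnelsalpha}. I would present the odd case this way, as a check that the sign and parity conventions agree.

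The main obstacle is the first step: pinning down rigorously which parity of step crosses a $\beta$-wall, uniformly across all $(\alpha+\beta)$-tunnels and both signs of $m$. I would first try to do this purely combinatorially. The walk type $\delta_{n+1}(m)$ records which generators are crossed, and the crossed wall at step $k$ is the reflection $w_{k-1}s_{i_k}w_{k-1}^{-1}$. I would compute its root direction with the crossed-root formulas of Lemma \ref{beforeturn}, checking that it is $\pm\beta$ exactly at alternate steps. If that becomes messy, I would fall back on the trapezium geometry of Lemma \ref{triangles}, as in the involution proof.
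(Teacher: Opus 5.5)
Your strategy is sound: start from $(n,0)$, which lies in the identity's $\beta$-tunnel, and count the $\beta$-walls crossed while walking along the $(\alpha+\beta)$-tunnel. It is essentially an explicit version of the paper's closing observation that adjacent $\beta$-tunnels differ by $2$ in the second coordinate.

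However, your key step has the parity backwards. In even notation, $(n,m)$ is downward facing exactly when $m$ is even. A downward-facing alcove has its right edge on a $\beta$-wall and its left edge on an $\alpha$-wall; for upward-facing alcoves it is the reverse. The correct pattern is therefore:
\begin{itemize}
\item $(n,2k)\to(n,2k+1)$ \emph{crosses} a $\beta$-wall, and $(n,2k+1)\to(n,2k+2)$ stays in the same $\beta$-tunnel;
\item on the negative side, $(n,-2k)\to(n,-2k-1)$ stays, and $(n,-2k-1)\to(n,-2k-2)$ crosses.
\end{itemize}
Concretely, $(0,1)=s_1$ lies across the identity's $s_1$-panel, which is on $H_\beta$. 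By contrast, $(2,-1)=s_0$ lies in the identity's $\beta$-tunnel. This is exactly the fact the paper uses when it identifies that tunnel with the alcoves $(n,0)$ and $(n,-1)$. Your bullets, taken literally, give the invariant $\lfloor m/2\rfloor=(m-\epsilon_m)/2$, which contradicts the statement. Your conclusion $\lceil m/2\rceil=(m+\epsilon_m)/2$ is correct, but it does not follow from them.

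The transfer to other $(\alpha+\beta)$-tunnels is also justified incorrectly. Translations are harmless, but a reflection in an $(\alpha+\beta)$-wall does not send $\beta$-walls to $\beta$-walls. Indeed $s_{\alpha+\beta}(\beta)=-\alpha$, so such a reflection swaps the $\alpha$- and $\beta$-directions. Remark~\ref{typepreserved} concerns the panel types $s_0,s_1,s_2$, and these are not attached to wall directions; a single wall carries panels of several types. For example, left multiplication by $s_0$ reflects in the horizontal wall through the identity's $s_0$-panel, sending $(0,0)\mapsto(2,-1)$ and $(0,1)\mapsto(2,0)=s_0s_1$. So a step across $H_\beta$ goes to a step across an $\alpha$-wall, even though both panels have type $s_1$.

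The fix avoids the transfer altogether:
\begin{itemize}
\item Edge directions depend only on whether the alcove faces up or down, so the corrected crossing pattern holds in every $(\alpha+\beta)$-tunnel.
\item The horizontal direction is not orthogonal to $\beta$, so successive $\beta$-crossings pass monotonically through consecutive $\beta$-tunnels.
\item Every $(n,0)$ lies in the identity's $\beta$-tunnel, so the tunnel of $(n,m)$ is indexed by $(m+\epsilon_m)/2$, independently of $n$.
\end{itemize}

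Finally, your own computation $m'+\epsilon_{m'}=-(m+\epsilon_m)$ shows that the criterion fails for mixed notations. For instance, $(0,1)$ and its odd form $(1,-2)$ are the same alcove, yet they give $2\neq-2$. You should therefore state explicitly that both alcoves must be written in the same notation, as in the paper's even-coordinate proof, rather than appealing vaguely to convention.
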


\begin{proof}
    We first prove that any alcove $(n,m)$ lying in the $\beta$-tunnel containing the identity satisfies 
    \[m+\epsilon_m=0.\]
    By Lemma \ref{tunnelsbeta}, alcoves in this tunnel have minimal galleries of type $\gamma_0(\nu)$ or $\delta_2(\nu)$. Computing the coordinates of this alcove using the algorithm gives us the coordinates $(k+\epsilon_k,-\epsilon_k)$ for any alcove in this tunnel. This clearly satisfies $m+\epsilon_m=0$, as $\epsilon_{-\epsilon_k}=\epsilon_k$. Also, if any alcove satisfies $m+\epsilon_m=0$, then $m=0$ or $m=-1$. So the coordinates of the alcove are of the form $(n,0)$ or $(n,-1)$. This translates to galleries of types $\gamma_0(n)$, $\gamma_0(n)s_{n+2}=\gamma_0(n-1)$, $\delta_2(n)$ and $\delta_2(n)s_{1-n}=\delta_2(n+1)$. These galleries are contained in the $\beta$-tunnel containing the identity, and so if an alcove satisfies $m+\epsilon_m=0$, it must be contained in this $\beta$-tunnel. 
    
    The result then follows from the observation that to move to the tunnel on the left we must subtract two from the second coordinate of every alcove, and to move to the tunnel on the right we must add two to the second coordinate of every alcove.
\end{proof}

\begin{lemma}
    Two alcoves $(n_1,m_1)$ and $(n_2,m_2)$ lie in the same $\alpha$-tunnel if and only if 
    \[n_1+m_1-\epsilon_{m_1}=n_2+m_2-\epsilon_{m_2}.\]
\end{lemma}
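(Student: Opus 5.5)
We will follow the proof of Lemma \ref{betatunnels} closely: first identify the $\alpha$-tunnel through the identity in coordinates, and then track how the quantity $f(n,m):=n+m-\epsilon_m$ changes as we move from one $\alpha$-tunnel to the next.

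\emph{Step 1.} By Lemma \ref{tunnelsalpha}, every alcove of the $\alpha$-tunnel containing the identity is reached by a minimal gallery whose type is increasing (moving right and down) or decreasing (moving left and up). The first letter of such a gallery must be the generator whose panel of the identity lies along that tunnel. With our conventions, the $\beta$-tunnel through the identity uses $\gamma_0$ and $\delta_2$, so the $\alpha$-tunnel uses the remaining pair of starting letters, namely $\gamma_2(k)$ and $\delta_1(k)$. We apply the \hyperref[algorithm]{Words-to-Coordinates Algorithm} to these words. A word $\gamma_2(k)$ begins with a decreasing subword of length $1$, so $j=2$, $u=k$, and the ending increasing subword adjusts $x$ when $k$ is odd. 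The region formula for $j=2$ then gives coordinates of the shape $(\epsilon_k,-k-\epsilon_k)$ or similar. For $\delta_1(k)$, either $j=1$ or $j=0$ according to the parity of $k$, and the region formula gives the analogous coordinates. In each case we check directly that $n+m-\epsilon_m=0$.

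\emph{Step 2.} We prove the converse for this tunnel. If $n+m-\epsilon_m=0$, then $n$ is even (the notation with even first coordinate) and $n=\epsilon_m-m$, so the alcove is $(-m,m)$ with $m$ even or $(1-m,m)$ with $m$ odd. We also handle the odd-$n$ notation by applying the involution $(n,m)\mapsto(n+m+1-\epsilon_m,-m-1)$. A short parity check shows that $f$ is invariant under this involution: $f$ of the image is $(n+m+1-\epsilon_m)+(-m-1)-\epsilon_{-m-1}=n-\epsilon_m-(1-\epsilon_m)+\dots$. We will verify this carefully, since it is what makes the statement independent of which coordinate notation is used. Next we translate $(-m,m)$ and $(1-m,m)$ into words using the case table of coordinates, either $\delta_2(m)\delta_{1+m}(m)$ or $\gamma_0(\cdot)\gamma_{\cdot}(\cdot)$, and we reduce them by the braid relations to $\gamma_2$ or $\delta_1$ words. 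Alternatively, we can avoid this reduction by checking that the coordinates from Step 1 sweep out all solutions of $f=0$ bijectively as $k$ varies.

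\emph{Step 3.} We show that crossing from one $\alpha$-tunnel to the adjacent one changes $f$ by exactly $\pm 2$ for every alcove, just as crossing a $\beta$-tunnel changes $m+\epsilon_m$ by $2$. The cleanest way is to move within a fixed $(\alpha+\beta)$-tunnel, where $n$ is fixed and $m$ increases by $1$ at each step. The map $m\mapsto m-\epsilon_m$ is constant on the pairs $\{2t,2t+1\}$ and jumps by $2$ between consecutive pairs. Consecutive alcoves in an $(\alpha+\beta)$-tunnel alternate between staying in the same $\alpha$-tunnel and crossing an $\alpha$-hyperplane, and the hyperplanes are crossed exactly at the even/odd pair boundaries. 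Since every $\alpha$-tunnel meets the $(\alpha+\beta)$-tunnel $n=0$, this shows that distinct $\alpha$-tunnels have distinct values of $f$ and that each tunnel has constant $f$. Constancy is obtained by combining the identity tunnel with the translations (elements of the group acting as isometries that preserve tunnels, Remark \ref{typepreserved}), or directly via Lemma \ref{tunnelsalpha} applied to steps inside a tunnel.

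We expect the main obstacle to be the bookkeeping in Step 3: verifying that along a row the $\alpha$-hyperplanes lie exactly between $m=2t+1$ and $m=2t+2$, uniformly in $n$ and in the sign of $m$. We would settle this by reading off the panel types crossed from the words $\delta_{n+1}(m)$ and $\gamma_{n+2}(-m)$, in the spirit of Lemma \ref{tunnels}.
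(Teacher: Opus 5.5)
\textbf{Step 1 uses the wrong tunnel.} The $\alpha$-tunnels are bounded by walls parallel to $H_\alpha$, the wall of $s_2$. So the $s_2$-panel of the identity lies on the \emph{boundary} of the $\alpha$-tunnel through $1$, and the two panels of the identity interior to that tunnel are its $s_0$- and $s_1$-panels. Your \emph{remaining pair} reasoning breaks down because each panel of the identity is interior to two of the three tunnels through $1$. The pair $\{s_1,s_2\}$ belongs to the $(\alpha+\beta)$-tunnel, not the $\alpha$-tunnel.

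By the definition of the coordinates, your words $\gamma_2(k)$ and $\delta_1(k)$ are exactly the alcoves $(0,-k)$ and $(0,k)$ of the row $n=0$. On that row $f(0,m)=m-\epsilon_m$ is not constant: $s_2=(0,-1)$ gives $f=-2$, and $s_1s_0=\delta_1(2)=(0,2)$ gives $f=2$. This also contradicts your own Step 3, where $f$ jumps by $2$ along a row.

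The correct words, which the paper uses, are $\delta_0(k)$ (up and to the left) and $\gamma_1(k)$ (down and to the right), by Lemma~\ref{tunnelsalpha}. The algorithm gives them even coordinates $(k,-k)$ and $(k,1-k)$ with $k$ even, for example $s_0=(2,-1)$, $s_0s_2=(2,-2)$, $s_1=(0,1)$, $s_1s_2=(-2,2)$. These satisfy $f=0$, and they are exactly the solutions $(-m,m)$ with $m$ even and $(1-m,m)$ with $m$ odd that you found in Step 2. So the converse needs no braid manipulations.

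\textbf{Step 2 also makes a false claim: $f$ is not invariant under the involution.} Writing $(n',m')=(n+m+1-\epsilon_m,-m-1)$, we have $n'+m'=n-\epsilon_m$ and $\epsilon_{m'}=1-\epsilon_m$, so $f(n',m')=n-1$. This differs from $n+m-\epsilon_m$ by the odd number $m-\epsilon_m+1$, so the two values are never equal. For instance, $s_0$ has $f=0$ in its even coordinates $(2,-1)$ but $f=1$ in its odd coordinates $(1,0)$, even though $s_0$ and $1$ share an $\alpha$-tunnel. The lemma therefore holds only for even-notation coordinates, as the paper's proof tacitly assumes, and you should restrict to those rather than try to prove notation-independence.

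\textbf{With these two repairs, your Step 3 row argument is sound.} For even $n$, the alcove $(n,m)$ faces down iff $m$ is even. Hence the step $2t\to 2t+1$ crosses a $\beta$-wall and the step $2t+1\to 2t+2$ crosses an $\alpha$-wall. The identity tunnel has $f=0$ in every row, and the $\alpha$-tunnels meet every row in the same left-to-right order. So the $r$-th $\alpha$-tunnel to the right of the identity tunnel has $f=2r$. This makes explicit the \emph{next tunnel} step that the paper leaves to analogy with Lemma~\ref{betatunnels}.

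Do not justify that step by group translations, though. The shift $(n,m)\mapsto(n,m+2)$ is a symmetry of the tiling, but the group element taking $1$ to $(0,2)$ is $s_1s_0$, which acts by left multiplication as a rotation, not a translation.
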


\begin{proof}
    The proof follows similarly to Lemma \ref{betatunnels}. We show that, for the $\alpha$-tunnel containing the identity, all alcoves satisfy
    \[n+m-\epsilon_{m}=0.\]
    By Lemma \ref{tunnelsalpha}, every alcove in this tunnel has type $\delta_0(\nu)$ or $\gamma_1(\nu)$. This gives us coordinates $(k,-k+1)$, and $(k,-k)$ for $k$ even. This clearly satisfies $n+m-\epsilon_{m}=0$. 
\end{proof}

\subsection{Acceptable folds in an alcove}

We now show that this notation gives an explicit description of the type of alcove that we have, i.e.\ the arrangement of panels in the alcove. We will use this categorisation in later proofs.

\begin{lemma}\label{type} Define the following types of alcoves:

\newcommand{\alcove}[4]{
    \begin{tikzpicture}[scale=0.5, baseline=(current bounding box.center)]
        \ifnum#1=0 
            \draw[#2] (0,0) -- (1,0);
            \draw[#4] (1,0) -- (0.5,0.866);
            \draw[#3] (0.5,0.866) -- (0,0);
        \else 
            \draw[#2] (0,0.866) -- (1,0.866);
            \draw[#3] (1,0.866) -- (0.5,0);
            \draw[#4] (0.5,0) -- (0,0.866);
        \fi
    \end{tikzpicture}
}
\begin{center}

\begin{tabular}{cccccc}
\toprule
\textbf{0} & \textbf{1} & \textbf{2} & \textbf{3} & \textbf{4} & \textbf{5} \\ \midrule
\alcove{1}{black}{red}{green} & 
\alcove{0}{green}{red}{black} & 
\alcove{1}{red}{green}{black} & 
\alcove{0}{black}{green}{red} & 
\alcove{1}{green}{black}{red} & 
\alcove{0}{red}{black}{green} \\ \bottomrule
\end{tabular}
\end{center}

where a black edge represents a panel of type $s_0$, a red edge represents a panel of type $s_1$, and a green edge represents a panel of type $s_2$. 
Consider an alcove with even coordinates $(n,m)$. Let $j$ be the type of $(n,m)$ as defined above. Then the following holds:
\begin{enumerate}
    \item if $n \equiv 0 \pmod 6$, then $j\equiv m \pmod 6$;
    \item if $n \equiv 2 \pmod 6$, then $j\equiv m+4 \pmod 6$;
    \item if $n \equiv 4 \pmod 6$, then $j\equiv m+2 \pmod 6$.
\end{enumerate}
We can write this in a closed form as 
\[j\equiv m+2n \pmod 6.\]

\end{lemma}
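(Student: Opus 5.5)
The plan is to track how the alcove type changes along the defining gallery of $(n,m)$. Crossing a panel of type $s_k$ reflects the alcove in that panel: the crossed panel stays the same colour and occupies the same edge line, while the other two colours swap, and the orientation flips between upwards and downwards facing. The table of types is arranged so that this produces a very regular effect on the index $j$. An alcove of type $j$ is upwards facing exactly when $j$ is even, so every crossing changes the parity of $j$. We would record the resulting transitions explicitly, as a function $T(j,k)\in\mathbb{Z}/6$ giving the type after crossing the $s_k$ panel of an alcove of type $j$.

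The key local claims come from Lemma \ref{tunnels}. A single rightward step in an $(\alpha+\beta)$-tunnel takes an alcove of type $j$ to type $j+1 \pmod 6$. A leftward step takes it to type $j-1$. Inside an $(\alpha+\beta)$-tunnel the two bounding hyperplanes carry a fixed pair of edges, and moving right alternately crosses the two slanted edges in the decreasing order $\delta_i$. We check these claims once on the tunnel through the identity, starting from type $0$ at $(0,0)$: the identity is downwards facing with $s_1$ on the right and $s_2$ on the left, so it is the first downward type in the table. Then $(0,1)=s_1$ should have type $1$, $(0,2)=s_1s_0$ type $2$, and so on. Since types are preserved by the group action (Remark \ref{typepreserved}), and every $(\alpha+\beta)$-tunnel is an image of this one with panel types preserved, the rule $j\mapsto j\pm1$ holds in every such tunnel. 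This immediately gives $j(n,m)\equiv j(n,0)+m \pmod 6$, for both signs of $m$, using the cases $\delta$ and $\gamma$ of the coordinate words.

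It remains to compute $j(n,0)$ for even $n$, which we claim is $2n \bmod 6$. For $n\ge0$, the alcove $(n,0)$ is reached by $\gamma_0(n)$ moving up the $\beta$-tunnel. Two steps of the increasing word $s_ks_{k+1}$ return the alcove to downward facing. We would verify directly from the picture that $(2,0)=s_0s_1$ has type $4$. This matches case (2), since $m+4\equiv 4$. Then, because the word $\gamma_0$ is periodic with period $3$ in its indices, each further pair of steps adds $4\equiv 2\cdot2 \pmod 6$; formally, $(n+2,0)$ is the image of $(n,0)$ under a translation-like product, and the transition function is applied twice. For negative $n$, the word $\delta_2(-n)$ is used symmetrically, and each pair of steps subtracts $4$. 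Combining the two parts gives $j\equiv m+2n \pmod 6$, and splitting $n \bmod 6$ into the residues $0,2,4$ recovers the three listed cases.

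The main obstacle is making the step sizes rigorous rather than read off a picture. This means confirming that the transition $T(j,k)$ really yields exactly $\pm1$ in the $(\alpha+\beta)$-direction and $+4$ per double step in the $\beta$-direction for every starting type, not just type $0$. We would handle this by a finite check. There are six types and three panel types, and the tunnel words of Lemmas \ref{tunnels} and \ref{tunnelsbeta} determine which panel is crossed from which type. So a small table verification, together with induction on $|m|$ and $|n|/2$, completes the argument.
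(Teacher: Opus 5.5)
Your proof is correct, but it takes a different route from the paper's.

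\textbf{The paper's route.} The paper first argues that the type of $(n,m)$ depends only on $n$ and $m$ modulo $6$. Moving along any tunnel cycles the panel types with period $3$ and the orientation with period $2$, so shifting either coordinate by $6$ is a translation. It then reads off the $18$ base cases $n\in\{0,2,4\}$, $m\in\{0,\dots,5\}$ from a picture.

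\textbf{Your route.} You derive the closed form additively from a local transition rule, and the rule does hold for every starting type. Crossing the right slanted edge sends $j\mapsto j+1$ in all six cases, crossing the left slanted edge sends $j\mapsto j-1$, and crossing a horizontal edge sends $j\mapsto j+3$. Along the defining gallery, the $|m|$ steps in the horizontal $(\alpha+\beta)$-tunnel contribute $m$. Going up the $\beta$-tunnel alternates a horizontal crossing ($+3$) with a right slanted crossing ($+1$), so each of the $n/2$ double steps contributes $4$, and symmetrically $-4$ going down. This gives $j\equiv 2n+m \pmod 6$ with only a couple of local checks instead of $18$ cases. It also explains why the formula is linear, which the paper's periodicity-plus-table argument does not. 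The paper's version is quicker to state but leans on the figure for all base cases.

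\textbf{Slips to fix.}
\begin{itemize}
\item Even $j$ are \emph{downward} facing, since type $0$ is the identity, as you yourself note later. Your parity-flip conclusion is unaffected.
\item Under the reflection, the colours are not ``swapped''. Each colour is carried to the mirror-image edge, and only the cyclic order of the colours reverses.
\item Remark \ref{typepreserved} alone does not transport the $\pm1$ rule. Left multiplication preserves panel types but not necessarily your table label $j$; for instance, reflection in a horizontal wall sends $j\mapsto j+3$. The direct finite check you propose in your last paragraph makes this appeal unnecessary.
\item Replace ``$\gamma_0$ has period $3$'' as the reason for $+4$ per double step with the horizontal-plus-right-slanted crossing observation above.
\end{itemize}
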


\begin{proof}
    We first prove that the type of an alcove only depends on $n$ and $m$ modulo $6$. By Lemmas \ref{tunnels}--\ref{tunnelsalpha}, moving in any tunnel cycles through the types of panels in a 3-cycle. Furthermore, it is easy to see that moving in any tunnel alternates the direction of the alcove in a 2-cycle. So, overall, every sixth element in a tunnel gives the same panel type and direction of alcove. This uniquely identifies the type of the alcove. Adding or subtracting 6 from the second coordinate corresponds exactly to moving six alcoves to the right or left. So, the type of alcove is constant modulo 6 in the second coordinate. Similarly, for the first coordinate, adding or subtracting 6 corresponds to moving up or down (respectively) in a $\beta$-tunnel by 6 alcoves. So, the type of alcove is also constant modulo 6 in the first coordinate.

Now, we only need to show that $j\equiv m+2n\pmod 6$ holds for $n=0,2,4$ and $m=0,1,2,3,4,5$. These cases can be explicitly verified. 

\begin{center}
\resizebox{0.4\textwidth}{!}{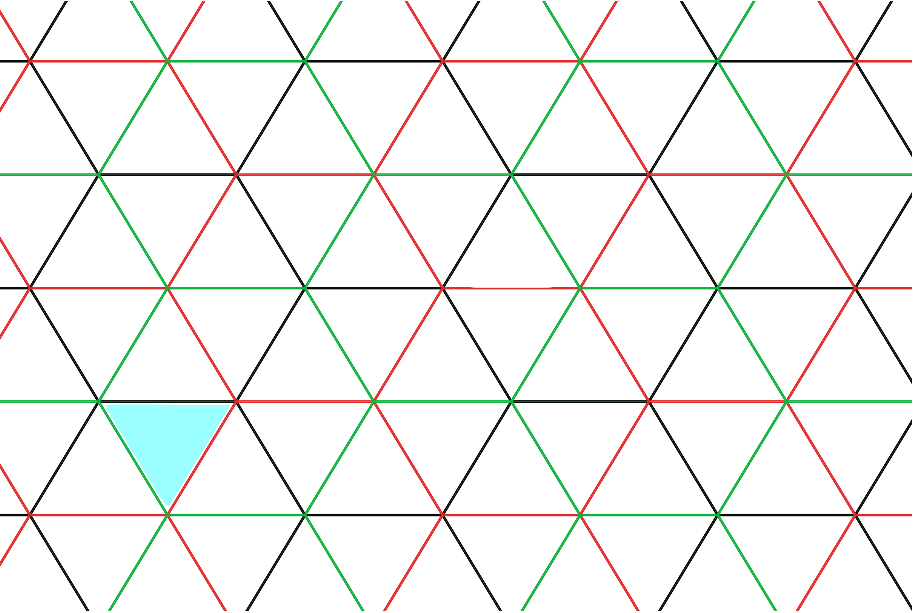}
\end{center}
\end{proof}

\subsection{The channels of $\aw$}\label{defchannels}

Now that we have defined our coordinate system on $\aw$, we define the \ix{channels} of $\aw$. These are the infinite sets of alcoves lying in the same root direction. The next picture illustrates this concept.

\begin{center}
\resizebox{0.4\textwidth}{!}{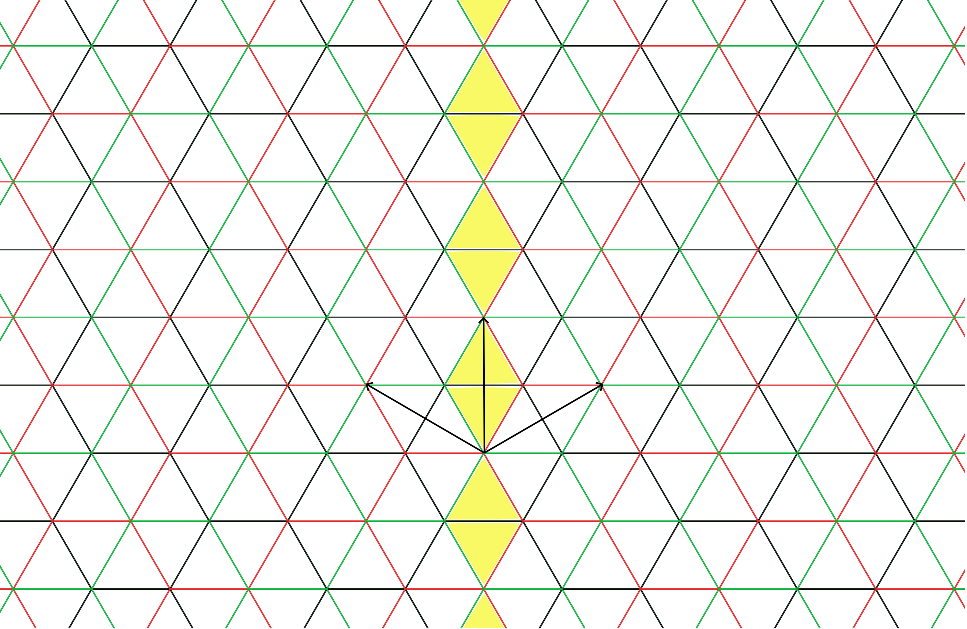}
\end{center}

Now we can shift these channels to the right or left to get new channels, shown here in different colours.

\begin{center}
\resizebox{0.4\textwidth}{!}{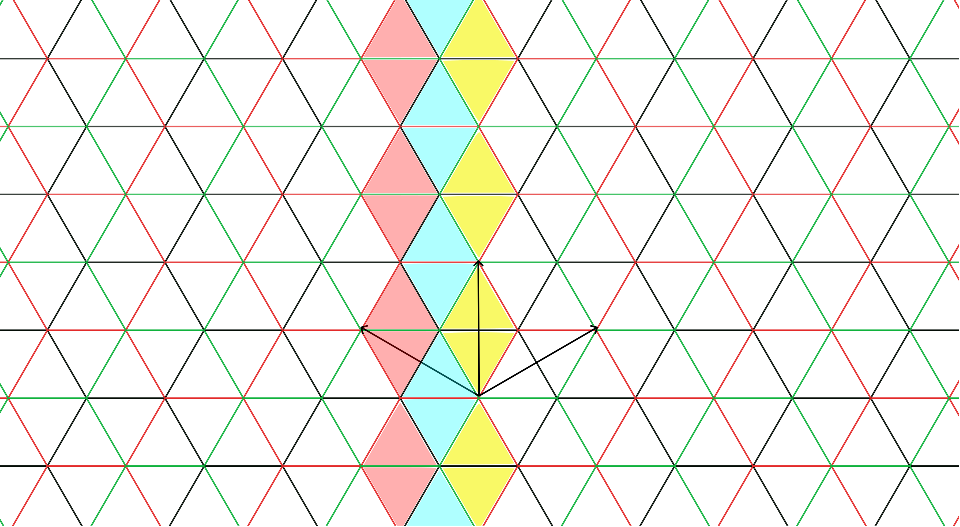}
\end{center}

There are also other channels in the directions of the other roots. The formal definition of a channel is as follows. 

\begin{definition}
    Fix a root $\gamma$. A \ix{$\gamma$-channel} is the orbit of an alcove under reflections in the hyperplanes $H_{\gamma,n}$ for all $n\in \mathbb{Z}$. 
\end{definition}

Here we prove some basic facts about these channels that will help us prove statements about shadows.

Firstly, we show that the panels on each edge of the boundary are consistent. This will be used later to make an inductive argument on the shadow of a gallery. The next lemma follows from Remark \ref{typepreserved}, as the types of panels are preserved under reflections in the hyperplanes. The following lemma does not rely on the specific geometry of $\aw$, and hence applies to any affine Coxeter complex.

\begin{lemma} \label{boundary}
    Each side of the boundary of any channel is made of panels of the same type.
\end{lemma}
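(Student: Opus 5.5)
The plan is to pin down exactly what a channel is as a set, and then read off its boundary. Fix a root $\gamma$ and an alcove $c$, and let $G_\gamma$ be the group generated by the reflections $s_{\gamma;k}$, $k\in\mathbb{Z}$. This group is infinite dihedral: it consists of the reflections $s_{\gamma;k}$ together with the translations by integer multiples of $\gamma^\vee$. So the channel $G_\gamma c$ consists of the translates of $c$ and of $s_{\gamma;0}c$ by $\mathbb{Z}\gamma^\vee$.

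The first step is to identify the boundary of the channel. The alcove $c$ lies in a single $\gamma$-tunnel, say between $H_{\gamma,n}$ and $H_{\gamma,n+1}$. The element $s_{\gamma;n}$ fixes $H_{\gamma,n}$ and maps this tunnel to the adjacent one. Consequently the channel consists of exactly one alcove in each $\gamma$-tunnel, and these alcoves are stacked along the $\gamma$ direction. Its boundary then splits into two sides, which are unions of panels of the alcoves in the orbit that are not themselves walls $H_{\gamma,k}$. (Here a side means a maximal connected piece of the boundary running in the $\gamma$-direction.)

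The second step shows that panel types are constant along each side. Take a boundary panel $p$ of some alcove $g c$ in the orbit, and suppose $p$ lies on a given side. Every other panel on that side is obtained from a panel of $c$ by applying some element of $G_\gamma$. Since $G_\gamma$ acts by elements of $\aw$, Remark \ref{typepreserved} tells us this action preserves panel types. So each boundary panel has the same type as the panel of $c$ that it comes from.

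It then remains to check that all panels on one side come from the same panel of $c$. I would argue as follows. The panels of $c$ transverse to $\gamma$ come in a fixed configuration relative to the two walls $H_{\gamma,n}$ and $H_{\gamma,n+1}$; in an affine complex, this is the configuration of the vertex of $c$ away from one wall. A reflection $s_{\gamma;k}$ swaps the two walls of the tunnel it moves $c$ across, but it preserves the "left/right" side relative to the $\gamma$ direction, since it fixes every vector orthogonal to $\gamma$. Hence the panel of $c$ lying on a given side is sent to the panel on the same side. Translations by $\gamma^\vee$ preserve sides trivially.

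The step I expect to be the main obstacle is making "side" precise and checking that reflection across the walls really keeps each side on the same side. To handle it, I would characterise a side via the linear functional $\langle\cdot,\delta\rangle$, where $\delta\perp\gamma$. The maps $s_{\gamma;k}$ preserve this functional, so the extremal panels in the $\delta$ direction map to extremal panels. This gives the result without using any $\aw$-specific geometry, as claimed.
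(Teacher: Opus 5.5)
Your argument is correct for $\aw$, and it rests on the same fact as the paper's one-line justification. The channel is the orbit of $c$ under the reflections in the walls $H_{\gamma,k}$; these act by left multiplication, so they preserve panel types (Remark \ref{typepreserved}).

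Where you go beyond the paper is in noticing that type preservation alone only gives each boundary panel the type of \emph{some} panel of $c$. You also need the panel of $c$ on a given side to be carried to the panel on that same side. Your observation supplies exactly this: every element of $G_\gamma$ fixes the functional $\langle\cdot,\delta\rangle$ with $\delta\perp\gamma$, while permuting the $\gamma$-walls. Your description of $G_\gamma$ as the reflections plus the translations by $\mathbb{Z}\gamma^\vee$ also yields Proposition \ref{once} more directly than the paper's minimality argument. One point needs tightening. The wall panel of an alcove also contains the extreme vertex, so ``extremal panel'' is ambiguous. Make it precise by comparing the $\delta$-values of the midpoints of the two panels of an alcove that do not lie on a $\gamma$-wall.

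Your closing claim that no $\aw$-specific geometry is used is, however, false; it echoes the remark the paper makes before the lemma. The side-matching step needs each alcove of the channel to have exactly one panel on each side. This is special to $\aw$, where every alcove has a panel on one wall of its $\gamma$-tunnel and the opposite vertex on the other wall.

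It fails in $\tilde{C}_2$, realised by the walls $x_1,x_2,x_1\pm x_2\in\mathbb{Z}$. Take the alcove with vertices $(0,0)$, $(1,0)$, $(\tfrac12,\tfrac12)$. It meets the walls $x_1=0$ and $x_1=1$ of its $e_1$-tunnel only in vertices, so both of its legs lie on the upper side of its $e_1$-channel. The reflection in $x_1=1$ sends its right leg to the left leg of the neighbouring alcove of the channel. If the legs have types $a\neq b$, the upper side therefore reads $a,b,b,a,a,b,\dots$. You should state the $\aw$ fact as the hypothesis it is, rather than claim generality.
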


Next, we show that the channels and tunnels corresponding to each root intersect uniquely. Again, this proposition does not rely on any geometry specific to $\aw$.

\begin{proposition}\label{once}
    Let $\gamma$ be a root. Consider a $\gamma$-tunnel and a $\gamma$-channel. Then there is exactly one alcove lying in both the tunnel and the channel. 
\end{proposition}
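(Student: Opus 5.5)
Fix a $\gamma$-tunnel $T$, the set of alcoves between the adjacent parallel hyperplanes $H_{\gamma,k}$ and $H_{\gamma,k+1}$, and a $\gamma$-channel $C$, the orbit of an alcove $c$ under the group $R_\gamma$ generated by the reflections $s_{\gamma;n}$, $n\in\mathbb{Z}$. The plan is to show that $R_\gamma$ acts on the set of $\gamma$-tunnels simply transitively. Then $C=R_\gamma c$ meets $T$ in exactly one alcove, namely $rc$ for the unique $r\in R_\gamma$ with $r(T_c)=T$, where $T_c$ is the $\gamma$-tunnel containing $c$.

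First I check that $R_\gamma$ permutes the $\gamma$-tunnels. Each $s_{\gamma;n}$ is an isometry of $E$ sending every hyperplane $H_{\gamma,j}$ to $H_{\gamma,2n-j}$; this is a one-line computation from $s_{\gamma;n}(x)=x-(\langle x,\gamma\rangle-n)\gamma^\vee$, which gives $\langle s_{\gamma;n}(x),\gamma\rangle=2n-\langle x,\gamma\rangle$. It also maps alcoves to alcoves, so it sends the strip between $H_{\gamma,j}$ and $H_{\gamma,j+1}$ to the strip between $H_{\gamma,2n-j-1}$ and $H_{\gamma,2n-j}$. Indexing tunnels by $j\in\mathbb{Z}$, each reflection therefore acts as $j\mapsto 2n-1-j$, and $R_\gamma$ is the infinite dihedral group of all maps $j\mapsto \pm j+t$ with the correct parity constraint. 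Composing two reflections gives the translation $j\mapsto j+2(n-n')$. The sign of $\langle x,\gamma\rangle-\langle c,\gamma\rangle$-type data distinguishes the two cosets, so the affine maps of $\mathbb{Z}$ arising are exactly $j\mapsto j+2t$ and $j\mapsto 2n-1-j$. Every target index is reached by exactly one of these: an even shift is reached by a translation, and an odd shift by a reflection.

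Next I show that a nontrivial element of $R_\gamma$ never fixes a tunnel index. This holds because translations by $2t\neq0$ move every index, and $j\mapsto 2n-1-j$ has no integer fixed point. Since $R_\gamma$ acts faithfully on $E$, the action on the index set $\mathbb{Z}$ is simply transitive. Hence for each target $T$ there is a unique $r$ with $rc\in T$.

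It remains to see that distinct elements of $R_\gamma$ give distinct alcoves $rc$, so that $C\cap T$ consists of exactly one alcove rather than of one element of $R_\gamma$. This is immediate: if $rc=r'c$ with $r\neq r'$, the two alcoves would lie in different tunnels by simple transitivity, which is a contradiction. I expect the main subtlety to be purely bookkeeping, namely checking the off-by-one in the reflection formula $j\mapsto 2n-1-j$ to confirm it has no fixed tunnel. None of this uses anything specific to $\aw$, which is consistent with the remark preceding the statement.
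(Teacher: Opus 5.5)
Your argument is correct, but it is organised differently from the paper's proof. The paper gets existence by descent: take the alcove of the channel separated from the tunnel by the fewest $\gamma$-hyperplanes, and show that reflecting it in the nearer boundary wall of the tunnel would lower that number. It gets uniqueness by a case analysis on whether the composite $r_n\cdots r_1$ carrying one common alcove to another fixes $H_1$ or sends it to $H_2$. That analysis ends with a reflecting wall strictly between two adjacent hyperplanes, a contradiction. You instead compute the induced action $s_{\gamma;n}\colon j\mapsto 2n-1-j$ on tunnel indices. You then observe that the resulting infinite dihedral group acts simply transitively on $\mathbb{Z}$, so existence and uniqueness come out together. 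Your existence is even explicit: use the translation by $\tfrac{j_1-j_0}{2}\gamma^\vee$ if $j_1-j_0$ is even, and the reflection $s_{\gamma;n}$ with $2n-1=j_0+j_1$ if it is odd. You also get for free that a channel is in bijection with the set of $\gamma$-tunnels.

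One justification needs replacing. ``$R_\gamma$ acts faithfully on $E$'' does not give faithfulness on the set of tunnels, and this is exactly where the content lies. Other elements of $\aw$, e.g.\ translations orthogonal to $\gamma$, preserve every $\gamma$-tunnel while moving alcoves along it. What you need is that $R_\gamma$ only moves points in the $\gamma^\vee$ direction. From your formula, $s_{\gamma;n}s_{\gamma;n'}(x)=x+(n-n')\gamma^\vee$. So even products are translations $x\mapsto x+t\gamma^\vee$, inducing $j\mapsto j+2t$, and odd products are reflections $s_{\gamma;n}$. Hence an element inducing the identity on tunnel indices is the identity. This is the same fact the paper invokes when it says $r_n\cdots r_1$ is ``either a reflection or a translation in the $\gamma$ direction''. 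It also replaces your vague remark about sign data distinguishing the two cosets.

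Your final paragraph is redundant. Uniqueness of the $r$ with $r(T_c)=T$ already means $C\cap T$ is the single alcove $rc$.
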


\begin{proof}
    Suppose $H_1=H_{\gamma,t}$ and $H_2=H_{\gamma,t+1}$ are the hyperplanes bounding the tunnel. Let $C$ be the set of alcoves in the $\gamma$-channel. Let $x$ be the element in $C$ such that the number of hyperplanes parallel to $H_1$ and $H_2$ separating $x$ from the interior of the $\gamma$-tunnel is minimal. Suppose that $x$ does not lie in the  $\gamma$-tunnel, and let $v$ be a point in the interior of $x$. Suppose without loss of generality that $l=\langle v,\gamma\rangle <t$. Then the number of hyperplanes separating $x$ from the tunnel is $\lfloor t-l\rfloor+1$. Now let $r$ be the reflection over $H_1$. Then $rv=v-(l-t)\gamma^\vee$, so $\langle rv, \gamma\rangle = l-2(l-t)=2t-l>t$. Now $rx\in C$, but the number of hyperplanes separating $rx$ from the $\gamma$-tunnel is $\lfloor t-l-1\rfloor+1<\lfloor t-l\rfloor+1$, contradicting the choice of $x$ in $C$. So $x$ lies in the tunnel, and so there is at least one element in both the tunnel and the channel.

    Now suppose $x,y$ are two distinct alcoves lying in both the $\gamma$-tunnel and $\gamma$-channel. As $x$ and $y$ are in the same channel, they are both in the orbit of some element with respect to reflections parallel to $H_1$ and $H_2$. So $y=r_n\cdots r_1x$, such that $H_{r_i}$ is parallel to $H_1$ and $H_2$. Now consider $r_n\cdots r_1H_1$. As $x$ and $y$ both lie between $H_1$ and $H_2$, we have that either $H_1$ is fixed setwise, or $r_n\cdots r_1H_1=H_2$. If it is fixed setwise, then it must be fixed pointwise, as all $H_{r_i}$ are parallel to $H_1$. But the only two actions which fix $H_1$ pointwise are the identity and reflection over $H_1$. So either $x=y$ or $y$ is not bounded by $H_1$ and $H_2$. If $r_n\cdots r_1H_1=H_2$, by the same argument $H_2$ is either fixed or $r_n\cdots r_1 H_2=H_1$. But $r_n\cdots r_1H_1=H_2$ so it cannot be fixed. Now $r_n\cdots r_1$ is either a reflection or a translation in the $\gamma$ direction, and it cannot be a translation as the tunnel bounded by $H_1$ and $H_2$ is setwise preserved, as both $x$ and $y$ lie in it. So $r_n\cdots r_1$ is a reflection switching $H_1$ and $H_2$. So $H_{r_n\cdots r_1}$ must be a parallel hyperplane separating $H_1$ and $H_2$, which is a contradiction as $H_1$ and $H_2$ are adjacent. 
\end{proof}

We now use our coordinate system to explicitly define these channels. In particular, we would like to know when two alcoves lie in a common channel. The next propositions and corollaries tell us when this occurs.

\begin{proposition} \label{alphaplusbeta}
    Given a fixed $k\in \mathbb{Z}$, an alcove in $\aw$ with even coordinates $(n,m)$ lies in the same $(\alpha+\beta)$-channel as $(0,k)$ if and only if it satisfies 
    \[n+2m=2k.\]
\end{proposition}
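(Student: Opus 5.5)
Since the $(\alpha+\beta)$-channel of $(0,k)$ is its orbit under reflections in the hyperplanes $H_{\alpha+\beta,t}$, and these hyperplanes bound the $(\alpha+\beta)$-tunnels (indexed by the first coordinate), I will prove the statement in three steps. First I compute what these reflections do in coordinates. Then I show the invariant $n+2m$ is preserved. Finally I use Proposition \ref{once} to get the converse.

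\textbf{Effect of a reflection.} Fix even $n$, and let $H$ be the hyperplane separating the $(\alpha+\beta)$-tunnels with first coordinates $n$ and $n+2$ (or $n-2$). Reflecting in $H$ maps the tunnel at level $n$ onto the adjacent tunnel. It reverses the vertical direction but preserves left/right along the tunnel. In the adjacent tunnel, the point reached by the $\beta$-tunnel path is shifted horizontally relative to the start of the level-$n$ tunnel. This is because the $\beta$-direction is slanted: going up a $\beta$-tunnel moves to the right by half a triangle per alcove.

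Using the geometric picture and Lemma \ref{triangles}, I expect to show that the reflection in the hyperplane between levels $n$ and $n+2$ sends $(n,m)$ to $(n+2,m-1)$ and, symmetrically, $(n+2,m')$ to $(n,m'+1)$. The check is as follows. The mirror image of the alcove $(n,0)$ is the alcove of the upper tunnel lying directly above it. By Lemma \ref{betatunnels}, that alcove is $(n+2,-1)$, since it shares the $\beta$-tunnel (the value of $m+\epsilon_m$) only after the appropriate shift. Horizontal distances are preserved, which gives $(n+2,m-1)$ in general. A sanity check is that $(n+2,-1)$ and $(n,0)$ differ by crossing a single panel, or differ only by the vertical reflection of the alcove.

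Under this map, $n+2m$ becomes $(n+2)+2(m-1)=n+2m$, so the quantity is preserved. Since the reflections $r_t$ in all $H_{\alpha+\beta,t}$ generate the channel, every alcove in the channel of $(0,k)$ satisfies $n+2m=2k$, which proves the forward direction.

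\textbf{Converse.} Suppose $(n,m)$ satisfies $n+2m=2k$. The alcove lies in the $(\alpha+\beta)$-tunnel at level $n$. By Proposition \ref{once}, exactly one alcove of the channel of $(0,k)$ lies in that tunnel. By the forward direction, that alcove has coordinates $(n,m')$ with $n+2m'=2k$. Hence $m'=m$, and $(n,m)$ is in the channel.

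\textbf{Main obstacle.} The hard step is pinning down the exact coordinate effect of the reflection, in particular the offset $-1$ per two levels. I would settle it directly from the word description. The alcove $(n+2,m-1)$ is obtained from $(n,m)$ by left multiplication by the reflection $r$. It suffices to check this on one alcove per tunnel, because reflections preserve tunnels setwise and order along them. Concretely, I would compare $\gamma_0(n+2)$ with $r\gamma_0(n)$, either via the words-to-coordinates algorithm or via the explicit picture, treating the cases $n\ge0$ and $n<0$ separately. An alternative is to parametrise by Euclidean position: with unit edges, the alcove $(n,m)$ has horizontal centre offset $(n/2+m)/2$ up to a constant depending on orientation. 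Reflection in a horizontal line preserves this offset, which forces $n+2m$ to be invariant immediately.
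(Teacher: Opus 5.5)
Your proposal is correct, and its forward direction takes a different route from the paper's.

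\textbf{How the two forward directions differ.} The paper never computes how a reflection acts on coordinates. It describes the $(\alpha+\beta)$-channel through the identity by words: adjacent alcoves share an $s_0$-panel, and alcoves sharing only a vertex differ by $s_1s_2s_1$. It runs the Words-to-Coordinates Algorithm on the four resulting families, obtaining $(4t,-2t)$, $(4t+2,-2t-1)$, $(-4t+2,2t-1)$ and $(-4t,2t)$. It then transfers the equation to the channel of $(0,k)$, using the observation that moving one step right in every $(\alpha+\beta)$-tunnel carries channels to channels.

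You instead read off, directly from the geometric definition of the coordinates, that reflection in the hyperplane between levels $n$ and $n+2$ acts by $(n,m)\mapsto(n+2,m-1)$. This makes $n+2m$ invariant for every $k$ at once. Your identification of the image of $(n,0)$ is sound, including for negative $n$. That image lies in row $n+2$ and in the $\beta$-tunnel $m+\epsilon_m=0$ by Lemma \ref{betatunnels}, so it is $(n+2,0)$ or $(n+2,-1)$. It is upward facing, so it is $(n+2,-1)$.

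\textbf{What each approach buys.} Your route avoids both the algorithm and the channel-shifting step, and it treats all channels uniformly. The paper's route instead exercises the word-level machinery it has just built and produces explicit reduced words for the channel elements. The converse is identical in both: the uniqueness in Proposition \ref{once}.

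\textbf{Two small points to tighten.}
\begin{enumerate}
\item You computed the effect only of the reflections in the two hyperplanes bounding the tunnel of the current alcove. The channel, however, is the orbit under all the $r_t$. Close this in one line: successive reflections in adjacent hyperplanes, starting from $(0,k)$, give a channel alcove in every $(\alpha+\beta)$-tunnel. By the uniqueness part of Proposition \ref{once}, these are the whole channel.
\item In your Euclidean alternative, drop the hedge ``up to a constant depending on orientation''. The reflections reverse orientation, so an orientation-dependent constant would destroy the invariance you want. In fact no such constant occurs. With unit edges, the centroid of $(n,m)$ has abscissa exactly $\tfrac n4+\tfrac m2$ relative to the identity's centroid, for both orientations. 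Reflections in horizontal lines preserve centroid abscissas, and centroids within one tunnel are spaced $\tfrac12$ apart. So the channel of $(0,k)$ is exactly the set of alcoves with abscissa $\tfrac k2$, which is $n+2m=2k$, and this version gives both directions almost immediately.
\end{enumerate}
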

To prove this statement, we first show that the formula holds for an alcove in the $(\alpha+\beta)$-channel that contains the identity alcove $(0,0)$. We then use the fact that moving to the channel on the right fixes the first coordinate and increases the second coordinate by one. Similarly, moving to the channel on the left fixes the first coordinate and decreases the second coordinate by one. So if the formula holds for the $(\alpha+\beta)$-channel containing the identity, then it must hold for all $(\alpha+\beta)$-channels.

We then prove that, given an alcove satisfying the equation, it must lie in the $(\alpha+\beta)$-channel containing $(0,k)$, by arguing that the channels meet each $(\alpha+\beta)$-tunnel exactly once, by Proposition \ref{once}. Given that we have shown that the element in the same channel satisfies this equation, this must be the unique element.

\begin{proof}
    $\Rightarrow$: We start by showing the equation holds for an alcove in the $(\alpha+\beta)$-channel that contains the identity. We do this by explicitly calculating the coordinates of elements in this channel.

    Consider two alcoves $x,y$ in this channel which are adjacent. By Lemma \ref{boundary}, the shared panel must have type $s_0$. Furthermore, to move between two alcoves in the channel that only share a vertex, we multiply by $s_1s_2s_1$.

\begin{center}
\resizebox{0.4\textwidth}{!}{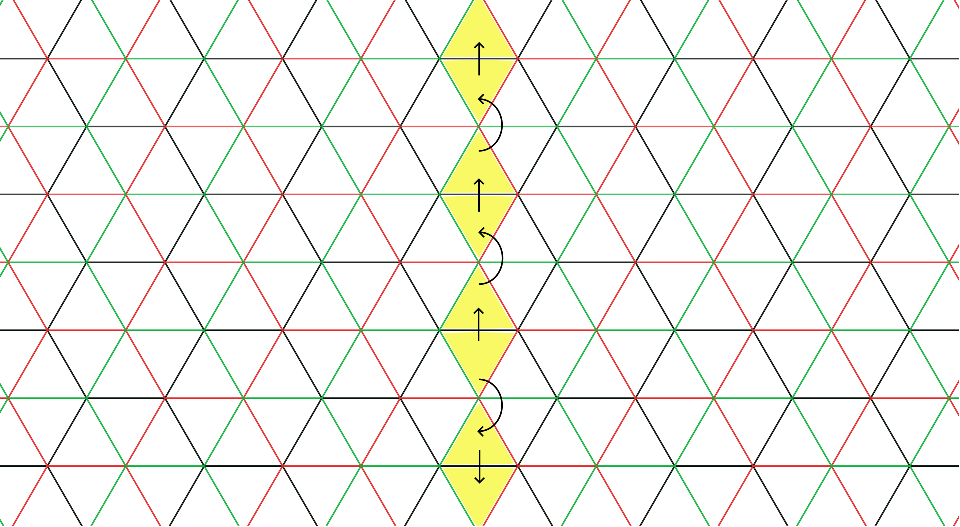}
\end{center}

    So now each element in this channel is one of four types:
    \begin{enumerate}
        \item $s_0s_1s_2s_1s_0s_1s_2s_1s_0\hdots s_1s_2s_1, $
        \item $s_0s_1s_2s_1s_0s_1s_2s_1s_0\hdots s_1s_2s_1s_0, $
        \item $s_1s_2s_1s_0s_1s_2s_1s_0\hdots s_1s_2s_1, $
        \item $s_1s_2s_1s_0s_1s_2s_1s_0\hdots s_1s_2s_1s_0. $
    \end{enumerate}
    We can now use the algorithm to output the coordinates of any alcove in this channel, and show that it satisfies the equation. We show that the equation holds for the first case. The others are very similar.
    
     So let us assume that $w=s_0s_1s_2s_1s_0s_1s_2s_1s_0\hdots s_1s_2s_1. $ This lies in the $s_0$-region. So, using the \hyperref[algorithm]{Words-to-Coordinates Algorithm}, we get 
    \[w=\underbrace{s_0s_1s_2}_\uparrow\underbrace{s_1s_0}_\downarrow \underbrace{s_1s_2}_\uparrow\underbrace{s_1s_0}_\downarrow\hdots \underbrace{s_1s_2}_\uparrow\underbrace{s_1.}_\downarrow\]

    So $x_w=2t+1$ and $y_w=2t-1$ for some $t$. Then the coordinates from the algorithm are $(n_w,m_w)=(4t,-2t)$. This clearly satisfies the equation. The other cases give us coordinates of the form $(4t+2,-2t-1)$, $(-4t+2,2t-1)$ and $(-4t,2t)$, respectively. These all clearly satisfy the equation.

    So, overall, all alcoves in the $(\alpha+\beta)$-channel containing the identity satisfy the equation. Now we note, by construction, that if $(n,m)$ lies in the channel containing the $(0,k)$ alcove, then $(n,m-k)$ lies in the channel containing the identity alcove. So, therefore,
    \[n+2(m-k)=0,\]
    and hence,
    \[n+2m=2k.\]

    $\Leftarrow$: Now we show that, if an alcove $(n,m)$ satisfies the equation, it must lie in the channel containing $(0,k)$. So suppose it satisfies the equation.  Now, by Proposition \ref{once}, the $(\alpha+\beta)$-channel meets the $(\alpha+\beta)$-tunnel in exactly one alcove. The $(\alpha+\beta)$-tunnels are defined by fixing the value of the first coordinate.  Fixing $n$, there is exactly one solution to the equation $n+2m=2k$ for a fixed $k$. We have shown above that the alcove in the $(\alpha+\beta)$-channel containing $(0,k)$ satisfies this equation, so this must be $(n,m)$. 
\end{proof}

We can now easily conclude a corollary that tells us when two alcoves lie in a common channel. 

\begin{corollary}
    Two alcoves in $\aw$, with even coordinates $(n_1,m_1)$ and $(n_2,m_2)$, lie in a common $(\alpha+\beta)$-channel if and only if
    \[n_1-n_2=2(m_2-m_1).\]
\end{corollary}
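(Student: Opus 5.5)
The plan is to deduce this corollary directly from Proposition \ref{alphaplusbeta}, using the fact that the $(\alpha+\beta)$-channels partition the set of alcoves. Each channel is an orbit of an alcove under the group generated by reflections in the hyperplanes $H_{\alpha+\beta,n}$, and distinct orbits are disjoint. So two alcoves lie in a common $(\alpha+\beta)$-channel exactly when they lie in the same orbit.

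For the forward direction, suppose $(n_1,m_1)$ and $(n_2,m_2)$ lie in a common $(\alpha+\beta)$-channel $C$. First I would check that $C$ contains some alcove of the form $(0,k)$. By Proposition \ref{once}, $C$ meets the $(\alpha+\beta)$-tunnel containing the identity in exactly one alcove. That tunnel consists precisely of the alcoves with first even coordinate $0$, since the $(\alpha+\beta)$-tunnels are distinguished by the first coordinate. So that alcove is $(0,k)$ for some $k\in\mathbb{Z}$. Proposition \ref{alphaplusbeta} then gives $n_1+2m_1=2k$ and $n_2+2m_2=2k$. Subtracting these yields $n_1-n_2=2(m_2-m_1)$.

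For the converse, suppose $n_1-n_2=2(m_2-m_1)$, so that $n_1+2m_1=n_2+2m_2$. Both $n_i$ are even, so this common value is even; write it as $2k$. Applying the reverse direction of Proposition \ref{alphaplusbeta} to each alcove shows that both lie in the $(\alpha+\beta)$-channel containing $(0,k)$. Hence they share a channel.

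The only point requiring care is the forward direction. There we must make sure every channel actually contains some alcove $(0,k)$, so that the proposition applies. This is handled by Proposition \ref{once} together with the identification of the identity $(\alpha+\beta)$-tunnel as $\{(0,m)\}$. Everything else is immediate arithmetic.
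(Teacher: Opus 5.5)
Your proposal is correct and is essentially the deduction the paper intends: the paper states the corollary without proof, as an immediate consequence of Proposition \ref{alphaplusbeta}. Your use of Proposition \ref{once} to guarantee that every $(\alpha+\beta)$-channel contains some $(0,k)$, together with the parity observation in the converse, just makes explicit the steps the paper leaves unstated.
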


Now we have very similar propositions and corollaries for the other two types of channels. In these two cases, to move from one channel to the next we must add or subtract two from the second coordinate. Also, for the $\beta$-channels, the uniqueness argument is a little more subtle - for a fixed $n$ there are two values of $m$ that satisfy the equation. We note here that we are distinguishing the channels by the elements $(0,2k)$. In the case of the $\beta$-channels, $(0,2k)$ and $(0,2k+1)$ lie in the same channel. For the $\alpha$-channels, $(0,2k)$ and $(0,2k-1)$ both lie in the same channel. 

\begin{proposition}

    Given a fixed $k\in \mathbb{Z}$, an alcove in $\aw$ with even coordinates $(n,m)$ lies in the same $\beta$-channel as $(0,2k)$ if and only if it satisfies 
    \[2n+m-\epsilon_m=2k.\]
\end{proposition}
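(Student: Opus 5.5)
I will follow the structure of the proof of Proposition \ref{alphaplusbeta}. The first step is to show the forward direction for the $\beta$-channel containing the identity, that is, the case $k=0$.

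By Lemma \ref{boundary}, two adjacent alcoves in this channel share a panel of a single fixed type. Since $\beta$ corresponds to $s_1$, the walls perpendicular to $\beta$ are the $s_1$-type walls through the identity and their translates. So adjacent alcoves in the channel share an $s_1$-panel. Alcoves in the channel that meet only in a vertex are related by right multiplication by $s_0s_2s_0$ (equivalently $s_2s_0s_2$). Hence every element of the channel has one of four forms:
\[s_1s_0s_2s_0s_1s_0s_2s_0\cdots,\qquad s_0s_2s_0s_1s_0s_2s_0s_1\cdots,\]
each ending either just after a block $s_0s_2s_0$ or just after an $s_1$.

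I will run the Words-to-Coordinates Algorithm on these words. The key steps are:
\begin{enumerate}
\item Determine the region by finding the maximal initial decreasing subword.
\item Decompose each word into alternating increasing and decreasing pieces. These have the periodic shape $s_1s_0$ or $s_2s_0s_1$ (up) and $s_2s_0$ or $s_0s_2$ (down), and so on.
\item Read off $u$, then $x$ and $y$, and apply the case formula in step 6 of the algorithm.
\end{enumerate}
I expect each family to produce coordinates linear in a parameter $t$, for instance of the form $(2t,-4t)$ or $(2t,-4t+1)$ up to small offsets. Substituting these into $2n+m-\epsilon_m$ should give $0$ in every case. The $\epsilon_m$ term is exactly what absorbs the two alcoves of the channel lying in the same $(\alpha+\beta)$-tunnel, whose second coordinates differ by one.

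To pass to general $k$, I will use the translation observation already made in the text: moving one $\beta$-channel to the right fixes $n$ and adds $2$ to $m$. Adding $2$ preserves the parity of $m$, so $\epsilon_m$ is unchanged. If $(n,m)$ lies in the channel of $(0,2k)$, then $(n,m-2k)$ lies in the channel of the identity. The identity case gives $2n+m-2k-\epsilon_m=0$, which is the claimed equation.

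For the converse, I fix $n$, which fixes an $(\alpha+\beta)$-tunnel. The equation $2n+m-\epsilon_m=2k$ has exactly two solutions, namely $m=2k-2n$ and $m=2k-2n+1$. These are two adjacent alcoves in that tunnel.

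This converse is the main obstacle, because Proposition \ref{once} only controls a $\beta$-channel meeting a $\beta$-tunnel, not an $(\alpha+\beta)$-tunnel. I would try the following. The $\beta$-channel is invariant under reflection in every $\beta$-wall, and the $(\alpha+\beta)$-tunnel is cut by consecutive $\beta$-walls into pieces of exactly two alcoves. The channel therefore meets the tunnel in a reflection-invariant set, and this set contains exactly one alcove per $\beta$-tunnel by Proposition \ref{once}.

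Restricting to the segment of the $(\alpha+\beta)$-tunnel between two adjacent $\beta$-walls, the channel contributes precisely the two alcoves adjacent across the shared $s_1$-panel, since their union is the piece cut out by consecutive $\beta$-walls. Combining this with the forward direction, the forward-direction pair already satisfies the equation. Since the equation has exactly two solutions for each $n$, those solutions must be that pair, so any $(n,m)$ satisfying the equation lies in the channel of $(0,2k)$.

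If this counting argument becomes awkward, I would fall back on a direct check. For fixed $n$, I would enumerate the channel elements computed from the four word families and verify that they realise both solutions.
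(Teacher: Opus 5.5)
Your forward direction, including the word families and the shift from $k=0$ to general $k$, matches the paper's proof. The gap is in the converse. Your counting argument fails at three points:
\begin{itemize}
\item A reflection in a $\beta$-wall does not preserve an $(\alpha+\beta)$-tunnel, so the intersection of the channel with that tunnel is not reflection-invariant.
\item Proposition~\ref{once} only gives at most one channel alcove per $\beta$-tunnel in the intersection. Since an $(\alpha+\beta)$-tunnel meets infinitely many $\beta$-tunnels, this gives no count.
\item Your key claim is false. The channel is an orbit under the group generated by $\beta$-reflections, whose elements are translations or reflections in $\beta$-walls. Two adjacent alcoves face opposite ways, so they are related by the reflection in the $\beta$-wall through their common panel. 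Hence they lie in \emph{different} $\beta$-tunnels.
\end{itemize}
For example, $(0,0)$ and $(0,1)=s_1$ both lie in the channel of the identity. By Lemma~\ref{betatunnels}, they lie in the $\beta$-tunnels $m+\epsilon_m=0$ and $m+\epsilon_m=2$. The piece of an $(\alpha+\beta)$-tunnel between consecutive $\beta$-walls is two alcoves sharing a panel on an $\alpha$-hyperplane. By Proposition~\ref{once} it contains at most one channel alcove, so your pair is not such a piece.

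The fix, which is what the paper does, is to intersect with $\beta$-tunnels, the pairing Proposition~\ref{once} is stated for. By Lemma~\ref{betatunnels}, a $\beta$-tunnel is $\{m+\epsilon_m=2r\}$, so $m\in\{2r-1,2r\}$. Subtracting from $2n+m-\epsilon_m=2k$ gives $n=k-r+\epsilon_m$. Requiring $n$ even picks exactly one of the two values of $m$, so the equation has exactly one solution in each $\beta$-tunnel. The channel meets each $\beta$-tunnel exactly once, in an alcove that satisfies the equation by your forward direction. Hence the channel and the solution set coincide.

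Your fallback would also work, but only if the forward computation is carried out exactly. You must show the word families give precisely $\{(2t,-4t),(2t,-4t+1) : t\in\mathbb{Z}\}$, with both forms occurring for every $t$ of either sign. Then for each even $n$ both solutions $m=-2n$ and $m=-2n+1$ lie in the identity's channel, and the translation step handles general $k$ without Proposition~\ref{once}.
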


\begin{proof}
    $\Rightarrow$: Following a very similar argument to Proposition \ref{alphaplusbeta}, we first show that the equation holds for all alcoves in the $\beta$-channel containing the identity. We do this by again explicitly writing down the group elements contained in the channel. We then run the algorithm and conclude that all elements of this channel are of the form $(2t,-4t)$ or $(2t,-4t+1)$ for some $t$. Both cases clearly satisfy the equation, and so the equation holds for elements in the $\beta$-channel containing $(0,0)$. 

    Now we note that, if $(n,m)$ is in the channel containing $(0,2k)$, then $(n,m-2k)$ is in the channel containing $(0,0)$. So
    \[2n+(m-2k)-\epsilon_{m-2k}=0.\]
    Now $\epsilon_{m-2k}=\epsilon_m$, so we get
    \[2n+m-\epsilon_m=2k.\]

    $\Leftarrow$: By Proposition \ref{once}, a $\beta$-channel and a $\beta$-tunnel meet each other exactly once. We show that, in each $\beta$-tunnel, the equation has exactly one solution. 

    Our $\beta$-tunnels are defined by $m+\epsilon_m=2r$, for some $r$. Solving this simultaneously with the equation $2n+m-\epsilon_m=2k$ gives us the solution 
    \[n=r+\epsilon_m.\]
    We note that there are exactly two solutions to $m+\epsilon_m=2r$, one odd and one even. But our first coordinate $n$ must be even. So, for a fixed $r$, we can only take the one value of $m$ such that $m+\epsilon_m=2r$ and $r+\epsilon_m$ is even. This then fixes $n$. Therefore, for fixed $r$, and so for each tunnel, there is exactly one solution to the equation. We have shown that the element in the channel solves the equation, so this must be the unique one. 
\end{proof}

Again, we can now state a corollary for when two alcoves lie in the same $\beta$-channel. 

\begin{corollary}
    Two alcoves in $\aw$, with even coordinates $(n_1,m_1)$ and $(n_2,m_2)$, lie in a common $\beta$-channel if and only if
    \[2(n_1-n_2)=(m_2-\epsilon_{m_2})-(m_1-\epsilon_{m_1}).\]
\end{corollary}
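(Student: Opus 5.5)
The corollary follows directly from the preceding proposition characterising the $\beta$-channels, in the same way that the $(\alpha+\beta)$-channel corollary follows from Proposition \ref{alphaplusbeta}. The $\beta$-channels partition the alcoves, since they are orbits under a group action. Each $\beta$-channel contains an alcove of the form $(0,2k)$ for some $k\in\mathbb{Z}$. Indeed, by the remark before the proposition, $(0,2k)$ and $(0,2k+1)$ lie in the same $\beta$-channel. So the alcoves $(0,c)$ with $c\in\mathbb{Z}$ meet every channel, because by Proposition \ref{once} each $\beta$-channel meets the $\beta$-tunnels. Alternatively, since the proposition's equation $2n+m-\epsilon_m=2k$ is solvable in $k$ for every alcove $(n,m)$ with $n$ even, every alcove lies in the channel of $(0,2k)$ with $k=(2n+m-\epsilon_m)/2$. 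This is an integer because $m-\epsilon_m$ is even.

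Thus I define the invariant $f(n,m)=2n+m-\epsilon_m$. By the proposition, $(n,m)$ lies in the $\beta$-channel of $(0,2k)$ if and only if $f(n,m)=2k$. I also need that distinct $k$ give distinct channels. This holds because $(0,2k)$ itself satisfies $f=2k$, so it lies in channel $k$ only. Hence $f$ is a complete invariant of $\beta$-channels.

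Now for $(n_1,m_1)$ and $(n_2,m_2)$, the two alcoves share a $\beta$-channel if and only if $f(n_1,m_1)=f(n_2,m_2)$. This reads $2n_1+m_1-\epsilon_{m_1}=2n_2+m_2-\epsilon_{m_2}$, which rearranges to $2(n_1-n_2)=(m_2-\epsilon_{m_2})-(m_1-\epsilon_{m_1})$.

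The only delicate point is the well-definedness step: that $f$ separates channels, i.e.\ that $(0,2k)$ and $(0,2k')$ with $k\neq k'$ do not share a channel. This is immediate from the proposition applied to $(0,2k')$, which satisfies $f=2k'\neq 2k$.
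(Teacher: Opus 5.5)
Your proof is correct and is essentially the paper's argument: the paper states this corollary as an immediate consequence of the preceding proposition, with $2n+m-\epsilon_m$ acting as a complete invariant of the $\beta$-channel, and the fact that channels are orbits (hence partition the alcoves) is what makes the rearrangement valid. One minor point: your first justification that every $\beta$-channel contains some $(0,2k)$ does not follow from Proposition \ref{once}, because the alcoves $(0,c)$ form an $(\alpha+\beta)$-tunnel rather than a $\beta$-tunnel; however, your alternative argument, which uses the reverse implication of the proposition with $k=(2n+m-\epsilon_m)/2$, is sound and is all you need.
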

    
Lastly, we have a proposition and corollary for the $\alpha$-channels.

\begin{proposition}\label{alpha}
    Given a fixed $k\in \mathbb{Z}$, an alcove in $\aw$ with even coordinates $(n,m)$ lies in the same $\alpha$-channel as $(0,2k)$ if and only if it satisfies 
    \[m-n+\epsilon_m=2k.\]
\end{proposition}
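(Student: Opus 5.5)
We follow the same two-step template used for Proposition \ref{alphaplusbeta} and for the $\beta$-channel proposition: first verify the equation on the $\alpha$-channel containing the identity, then translate to the other channels, and finally use Proposition \ref{once} to get the converse.

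For the forward direction, we first list the group elements in the $\alpha$-channel through $(0,0)$. By Lemma \ref{boundary}, adjacent alcoves of this channel share a panel of a single fixed type. The identity's $s_1$-panel lies on the right and its $s_2$-panel on the left. For the $\alpha$-direction, the panel lying on an $\alpha$-hyperplane through the identity is the $s_2$-panel, so consecutive alcoves are related by right multiplication by $s_2$. Alcoves meeting only at a vertex are related by $s_0s_1s_0$, and this conjugate pattern can be read off the picture exactly as $s_1s_2s_1$ was in the $(\alpha+\beta)$ case. This gives four families of words:
\begin{itemize}
\item $s_2s_0s_1s_0s_2s_0s_1s_0\cdots$ and its variants,
\item those starting with $s_0s_1s_0$,
\item in each case, ending either with or without a final $s_2$.
\end{itemize}
Running the Words-to-Coordinates Algorithm on each family, as in the $(\alpha+\beta)$ proof, I expect outputs of the shape $(2t,2t)$ and $(2t,2t-1)$ up to sign conventions. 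These satisfy $m-n+\epsilon_m=0$, since $\epsilon_{2t-1}=1$. I then check that, with the even-coordinate convention, shifting to the next $\alpha$-channel fixes $n$ and changes $m$ by $2$, matching the remark before the $\beta$-proposition. So if $(n,m)$ lies in the channel of $(0,2k)$, then $(n,m-2k)$ lies in the identity channel. Since $\epsilon_{m-2k}=\epsilon_m$, this gives $m-n+\epsilon_m=2k$.

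For the converse, I use Proposition \ref{once} with the $\alpha$-tunnels, which by the $\alpha$-tunnel lemma are the level sets $n+m-\epsilon_m=2r$. It suffices to show that the system
\[ n+m-\epsilon_m=2r, \qquad m-n+\epsilon_m=2k \]
has exactly one solution with $n$ even. Adding the equations gives $m=r+k$, which fixes $\epsilon_m$. Subtracting them gives $n=r-k+\epsilon_m$. Now $n$ is even precisely when $r-k+\epsilon_{r+k}$ is even. Since $r-k\equiv r+k \pmod 2$, we have $r-k+\epsilon_{r+k}\equiv 2\epsilon_{r+k}\equiv 0$, so $n$ is always even. Hence each $\alpha$-tunnel contains exactly one solution. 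The forward direction produced a channel alcove in that tunnel satisfying the equation, so by Proposition \ref{once} the solution must be that alcove.

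The main obstacle is the explicit computation in the forward direction. We must correctly identify which generator labels the shared panels of the identity $\alpha$-channel, and which three-letter element joins vertex-adjacent alcoves. We then have to push each of the four families through the Algorithm, including the region choice $j$ and the odd-end-subword correction, and convert the output to even coordinates via the involution. I would first do small cases by hand, for example $s_2$, $s_0s_1s_0$ and $s_2s_0s_1s_0$, to pin down the pattern, and then argue the general family inductively as in the $(\alpha+\beta)$ case.
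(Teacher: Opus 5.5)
Your proposal is correct and takes essentially the same route as the paper. You compute that the identity $\alpha$-channel consists of the alcoves $(2t,2t)$ and $(2t,2t-1)$ (the shared $s_2$-panels and the vertex-joining element $s_0s_1s_0$ are the right choices), translate by $(0,2k)$, and for the converse solve against the $\alpha$-tunnel equation $n+m-\epsilon_m=2r$ and invoke Proposition~\ref{once}. Your explicit check that $n=r-k+\epsilon_{r+k}$ is always even is a small detail the paper leaves implicit.
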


\begin{proof}
    $\Rightarrow$: Very similarly to the last two proofs, we conclude that the elements in the $\alpha$-channel containing $(0,0)$ have coordinates $(2t,2t)$ or $(2t,2t-1)$, which both satisfy the equation. Then, to move to the next channel, we add two to the second coordinate. So, if $(n,m)$ lies in the channel containing $(0,2k)$, $(n,m-2k)$ lies in the channel containing $(0,0)$. Hence, the alcove $(n,m)$ satisfies the equation. 

    $\Leftarrow$: By definition, an $\alpha$-tunnel and $\alpha$-channel meet in exactly one alcove. Each $\alpha$-tunnel is defined by the equation $n+m-\epsilon_m=2r$ for some $r$. Now simultaneously solving this with the equation $m-n+\epsilon_m=2k$ gives the unique solution
    \[m=r+k, \quad n=r-k+\epsilon_{r+k}.\]
    As we have shown that the alcove lying in the channel satisfies the equation, this must be the unique solution. 
\end{proof}
    
\begin{corollary}
    Two alcoves in $\aw$, with even coordinates $(n_1,m_1)$ and $(n_2,m_2)$, lie in a common $\alpha$-channel if and only if
    \[n_1-n_2=(m_1+\epsilon_{m_1})-(m_2+\epsilon_{m_2}).\]
\end{corollary}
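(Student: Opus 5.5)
We prove the corollary directly from Proposition \ref{alpha}, in the same way the two earlier corollaries follow from their propositions. The plan is to show that two alcoves lie in a common $\alpha$-channel exactly when the quantity $m-n+\epsilon_m$ takes the same value on both, and then to rearrange.

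First we check that every $\alpha$-channel is the channel of some alcove $(0,2k)$. Proposition \ref{once} says each $\alpha$-channel meets each $\alpha$-tunnel in exactly one alcove. In particular it meets the $\alpha$-tunnel through the identity, whose alcoves are described in the proof of the tunnel lemma. The remark before the $\beta$-channel proposition gives a cleaner route: consecutive $\alpha$-channels are obtained by adding $2$ to the second coordinate, and $(0,2k)$ and $(0,2k-1)$ share a channel. Hence the alcoves $(0,2k)$, $k\in\mathbb{Z}$, meet every $\alpha$-channel. They also lie in pairwise distinct channels, since by Proposition \ref{alpha} the value $m-n+\epsilon_m$ on $(0,2k)$ is $2k$, and these values are distinct. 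So $\alpha$-channels are in bijection with $k\in\mathbb{Z}$.

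Next, by Proposition \ref{alpha}, each alcove $(n,m)$ lies in the channel of $(0,2k)$ with $2k=m-n+\epsilon_m$. Note that this quantity is always even, because $m+\epsilon_m$ and $n$ are both even, so such a $k$ exists. Therefore $(n_1,m_1)$ and $(n_2,m_2)$ lie in a common $\alpha$-channel if and only if
\[m_1-n_1+\epsilon_{m_1}=m_2-n_2+\epsilon_{m_2}.\]
Rearranging gives $n_1-n_2=(m_1+\epsilon_{m_1})-(m_2+\epsilon_{m_2})$, which is the statement.

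The only delicate step is confirming that the channels indexed by $(0,2k)$ exhaust all $\alpha$-channels and are distinct, so that "common channel" reduces to equality of the invariant. The forward direction of Proposition \ref{alpha} handles this once we know every alcove's invariant is an even integer, which was checked above.
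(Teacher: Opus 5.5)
Your argument is correct and is essentially the paper's: the paper states this corollary as an immediate consequence of Proposition \ref{alpha}, and your check that $m-n+\epsilon_m$ is always even (so every alcove lies in the channel of some $(0,2k)$) is exactly what justifies reading off the rearranged equation. One small slip is that placing each alcove in the channel of $(0,2k)$ uses the reverse implication of Proposition \ref{alpha} (equation implies membership), not the forward one; this is harmless since the proposition is an equivalence.
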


Now we prove a lemma that tells us that the shadow with respect to the trivial orientation has a line of symmetry. Later, we will combine this with the $(\alpha+\beta)$-channels to describe the shadow. We will do this by explicitly stating how many elements in each $(\alpha+\beta)$-channel appear in the shadow, and so, by the line of symmetry, we will completely describe the shadow.

When working with the trivial orientation, we can always assume that our starting alcove is in the $s_0$-region. Otherwise, we can apply an automorphism permuting the generating elements. Here, we take the reduced gallery  \[s_0s_1s_2\hdots s_is_{i+2}s_{i+1}s_i\hdots=\gamma_0(\nu)\delta_{\nu+1}(\mu).\]
By Lemma \ref{cover}, any element in the $s_0$-region is the end alcove of some gallery of this form.

\begin{lemma}\label{symmetry}
    There is a line of symmetry in the shadow of any alcove in the $s_0$-region, with respect to the trivial orientation, about the hyperplane containing the $s_0$-panel of the identity alcove.
\end{lemma}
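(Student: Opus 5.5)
Let $H$ denote the wall containing the $s_0$-panel of the identity alcove, and let $r=s_0$ be the reflection in $H$. The aim is to show $r\,\Sh{w}=\Sh{w}$ for every $w$ in the $s_0$-region. By the remark that shadows do not depend on the choice of minimal gallery, we may use the reduced gallery $\gamma$ of type $\gamma_0(\nu)\delta_{\nu+1}(\mu)$ from $1$ to $w$, which exists by Lemma \ref{cover}. This word begins with $s_0$, so the first panel $p_1$ of $\gamma$ is exactly the $s_0$-panel of the identity, and that panel lies in $H$.

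The first step is to use folding at $p_1$. For any fold set $I\subseteq\{2,\dots,n\}$, consider the folded gallery $\gamma^I$; its first panel is still $p_1$, because foldings at later indices do not change $c_0,p_1$. By definition of the folding operation,
\[(\gamma^I)^1=(c_0,p_1,r c_1',r p_2',\dots,r c_n'),\]
where $c_k'$ and $p_k'$ are the alcoves and panels of $\gamma^I$. Its end alcove is therefore $r$ applied to the end alcove of $\gamma^I$. By the commutation lemma for foldings, $(\gamma^I)^1=\gamma^{I\cup\{1\}}$ when $1\notin I$, and symmetrically $\gamma^{I\cup\{1\}}$ yields $\gamma^I$ after folding at $1$ again, since $(\gamma^i)^i=\gamma$. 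Consequently, the set of all fold sets of $\gamma$ splits into pairs $\{I,I\cup\{1\}\}$, and the endpoints of the two members of each pair are exchanged by $r$. Since the trivial orientation allows every fold, both members of each pair contribute endpoints to $\Sh{w}$. Hence $x\in\Sh{w}$ if and only if $rx\in\Sh{w}$.

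The second step is to translate this into the geometric statement. Left multiplication by $r=s_0$ acts on the complex as the orthogonal reflection in the wall $H_{s_0}$, and that wall contains the $s_0$-panel of $1$ (Definition \ref{walls} and the remark that the action is an isometry). So the invariance $r\Sh{w}=\Sh{w}$ says exactly that $H$ is a line of symmetry of the shadow.

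The main point requiring care is the claim that folding at panel $p_1$ acts on the endpoint as left multiplication by the fixed reflection $s_0$. This holds because $c_0=1$ is never moved, so $p_1$ and its wall are the same in every folded gallery $\gamma^I$. I would state this explicitly, noting that foldings at indices greater than $1$ only change alcoves after $c_1$.

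For a cleaner algebraic check, the same argument can be phrased in terms of subwords: the endpoint of $\gamma^I$ is the product of the word with the letters indexed by $I$ deleted. Deleting the initial $s_0$ or not changes this product by left multiplication by $s_0$, which gives the symmetry immediately.
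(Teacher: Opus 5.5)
Your proposal is correct and is essentially the paper's argument. The paper also takes a reduced gallery whose first panel $p$ is the $s_0$-panel of the identity. It then observes that toggling the fold at $p$ (unfolding it if present, adding it if absent) sends an endpoint $w$ to $s_0w$, and that left multiplication by $s_0$ is the reflection in the wall through $p$; your pairing of fold sets $I$ and $I\cup\{1\}$, or equivalently deleting or keeping the initial $s_0$ in the subword product, is just tidier bookkeeping of the same two cases.
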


\begin{proof}
    By construction, the first panel of the gallery taken is the $s_0$-panel of the identity alcove. Call this panel $p$. Now consider any element $w$ in the shadow of this gallery. There are two cases to consider. 
    \begin{enumerate}
        \item The folded gallery to $w$ has a fold at $p$. Then we unfold the gallery at panel $p$ to get the element $s_0w$, which must also be in the shadow.
        \item The folded gallery does not have a fold at $p$. So the word $x$ describing the footprint of this gallery starts with $s_0$. Also, $x$ and $w$ must represent the same element in $W$. Now we can add a new fold at panel $p$ to our folded gallery. This produces the element $s_0x=s_0w$.

    \end{enumerate}
    As $s_0(s_0w)=w$, we conclude that $w$ is in the shadow of a standard gallery if and only if $s_0w$ is in the shadow.

        Now multiplying elements on the left by $s_0$ is the automorphism of $\aw$ that reflects each alcove over the hyperplane containing panel $p$. So there must be a line of symmetry in the shadow across this hyperplane. 
\end{proof}

\section{Calculations of Bruhat intervals}\label{calculations}

Here we use our notation to explicitly calculate shadows in $\aw$ with respect to the trivial orientation. This shadow exactly contains the elements that are smaller than our end alcove of the gallery with respect to the Bruhat ordering, see \cite{MASTERS}. For this orientation, as discussed in the previous section, we can make the assumption that our end alcove lies in the $s_0$-region. We can view this shadow in a number of different ways.

\subsection{The convex hull of points}

Here is a picture of the shadow of the alcove with notation $(6,7)$. The purple coloured alcove is the alcove $(6,7)$. 
\begin{center}
\includegraphics[scale=0.1]{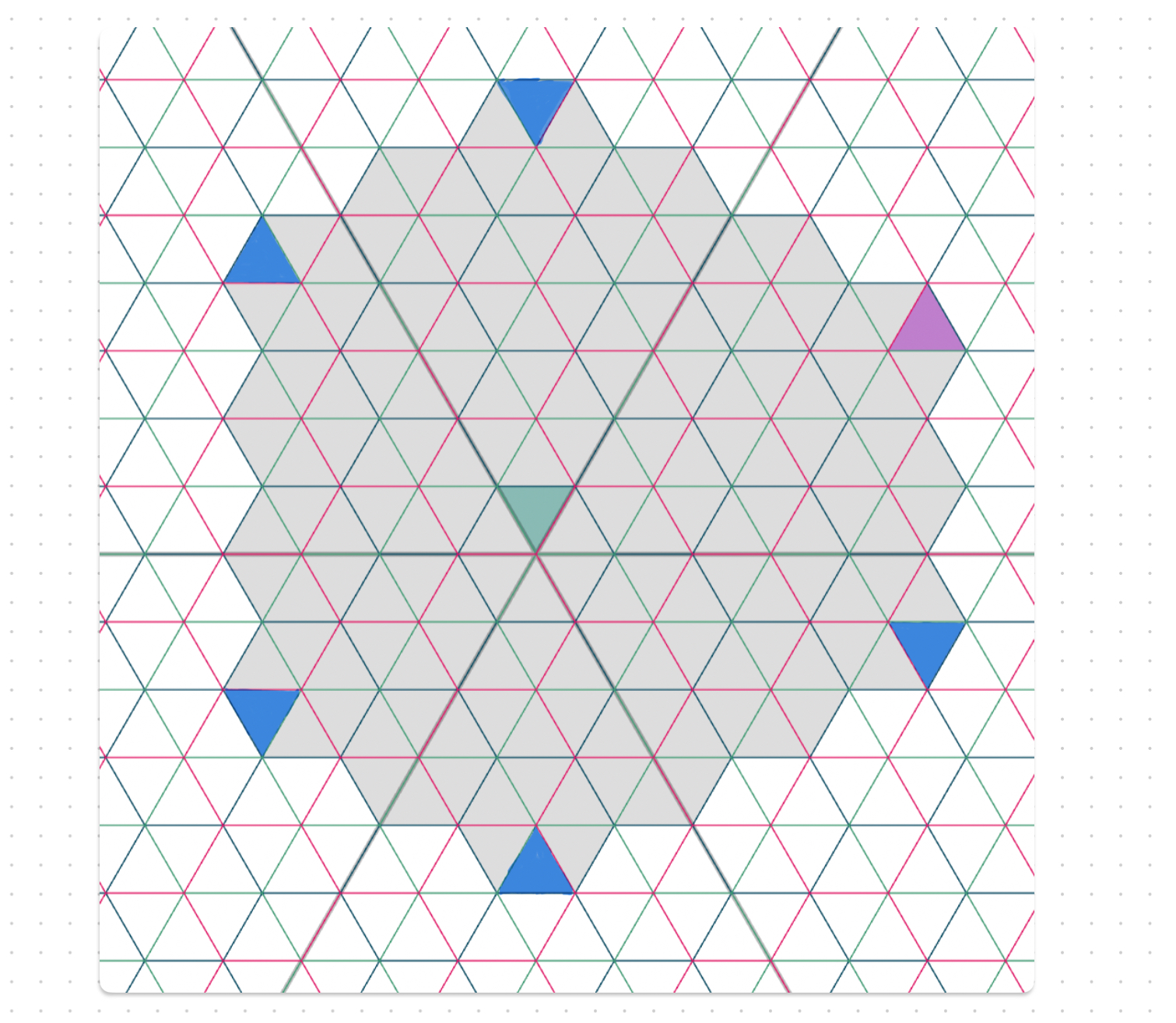}
\end{center}

We can see here that the shadow is the convex hull of the 6 highlighted alcoves. For each alcove $(n,m)$, these boundary alcoves are completely described by $n$ and $m$ by the following proposition. Here we allow $n$ to be odd or even, and for each alcove we choose the coordinates that gives us a nonnegative second coordinate. This is achieved by using the involution from Section \ref{involution}. 
\begin{proposition}\label{convex}
    Let $w=(n,m)$, with $n\geq 2$ and $m\geq 1$. Then the shadow of $w$ with respect to the trivial orientation is the convex hull of the following alcoves:
    \begin{align*}
    &(n,m),\\
    &(3-n+\epsilon_n,n+m-3+\epsilon_n),\\
    &(3-n-m-\epsilon_n+\epsilon_m,n-2-\epsilon_n-\epsilon_m),\\
    &(n+1,m-2),\\
    &(2-n-\epsilon_n,n+m-1-\epsilon_n),\\
    &(2-n-m-\epsilon_n+\epsilon_m,n-\epsilon_n-\epsilon_m),
    \end{align*}
    where $\epsilon_k=1$ if $k$ is odd and $\epsilon_k=0$ if $k$ is even.
\end{proposition}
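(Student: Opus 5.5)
I would prove the two inclusions separately: first that every alcove in the shadow lies in the convex hull $P$ of the six listed alcoves, and then that every alcove of $P$ is in the shadow. Throughout I would work with the standard reduced gallery $\gamma_0(\nu)\delta_{\nu+1}(\mu)$ ending at $w$, which is available by Lemma \ref{cover} and the reduction to the $s_0$-region. The first step is to identify the six vertices with explicit foldings of this gallery. The vertex $(n,m)$ is the unfolded gallery itself. The vertex $(n+1,m-2)$ should come from a single fold at the last panel, since it is the neighbour of $w$ across its final wall. The remaining vertices should arise from foldings along the first panels. Using Lemma \ref{symmetry}, the reflection $s_0$ pairs the vertices, namely $(n,m)\leftrightarrow(2-n-\epsilon_n,n+m-1-\epsilon_n)$ and so on. I would check this pairing with the Words-to-Coordinates Algorithm and the involution of Section \ref{involution}. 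This halves the verification, and it also shows that $P$ is symmetric about the $s_0$-hyperplane of the identity.

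For the inclusion $\Sh{w}\subseteq P$, I would induct on the length of the gallery, adding one panel at a time. Write $P_k$ for the hexagon attached to the prefix of length $k$, computed from its coordinates. Any folding of the length-$(k+1)$ gallery either has its end alcove equal to the end alcove of a folding of the prefix (when the last panel is folded), or is obtained from such an alcove by crossing the panel of the last type. The inductive claim is therefore that $P_k\cup P_k s_{i_{k+1}}\subseteq P_{k+1}$, where $P_k s_{i_{k+1}}$ means crossing the $s_{i_{k+1}}$-panel of each alcove. To prove it, I would use Lemma \ref{boundary}: each side of $P_k$ runs along a channel boundary made of panels of a single type. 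Crossing panels of type $s_{i_{k+1}}$ therefore pushes out exactly the sides whose panels have that type, each by one alcove. Lemma \ref{type} tells me which panel types occur on which sides, and the channel equations (Proposition \ref{alphaplusbeta} and its $\alpha$, $\beta$ analogues) turn "push this side out by one" into the coordinate shift between the formulas for $P_k$ and $P_{k+1}$.

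For the reverse inclusion $P\subseteq\Sh{w}$, I would use the standard fact that Bruhat intervals are closed under the same inductive description: $\Sh{ws}=\Sh{w}\cup\Sh{w}s$ when $\ell(ws)>\ell(w)$. With this, the same induction gives equality at each step, provided I show $P_{k+1}=P_k\cup P_k s_{i_{k+1}}$ rather than just the inclusion. So the real content of both directions is this single geometric identity about hexagons: in the triangular tiling, translating a convex hexagon with channel-aligned sides across panels of one type fills exactly the enlarged hexagon, with no holes. I would check it by an alcove count along each $(\alpha+\beta)$-channel, using the fact from Proposition \ref{once} that each channel meets each tunnel exactly once.

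The main obstacle I expect is this inductive step. I have to confirm that the listed coordinates, with their parity terms $\epsilon_n,\epsilon_m$, transform correctly when one more letter is appended, and this splits into cases by parity and by whether the new letter lies in the increasing part $\gamma_0(\nu)$ or the decreasing part $\delta_{\nu+1}(\mu)$. On top of that, the small base cases near $n=2$, $m=1$ need separate handling. I would organise the cases by the residue of the length mod $6$, using Lemma \ref{type}, and verify the base hexagons directly.
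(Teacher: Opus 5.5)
Your overall route is the one the paper intends. You induct along $\gamma_0(n)\delta_{n+1}(m)$ using $(\star)$, treat the sides of the current region as channel segments of uniform panel type (Lemma \ref{boundary}), use Lemma \ref{type} to decide which sides the new letter pushes out, and use Lemma \ref{symmetry} to halve the cases. Your two inclusions collapse to the single identity $P_{k+1}=P_k\cup P_ks_{i_{k+1}}$, as in the proof of Lemma \ref{convexlemma}. Two concrete points need fixing.

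\textbf{The fold giving $(n+1,m-2)$.} This vertex is not the fold at the last panel unless $m=1$. Folding the last panel gives the penultimate alcove $(n,m-1)$. The involution sends $(n+1,m-2)$ to $(n+m-\epsilon_m,1-m)$, which has the same parity of first coordinate as $n$ but negative second coordinate when $m\ge 2$, so it is a different alcove. For instance, for $(2,2)=s_0s_1s_0s_2$ the last-panel fold is $s_0s_1s_0=(2,1)$, whereas the vertex $(3,0)$ is $s_0s_1s_2$. In general, for $m\ge 2$, $(n+1,m-2)$ comes from folding panel $n+1$, the first letter of $\delta_{n+1}(m)$, because $\gamma_0(n)\delta_n(m-1)=\gamma_0(n+1)\delta_{n+2}(m-2)$. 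This is not fatal, since your inductive identity yields membership of all vertices anyway, but your first step is false as written.

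\textbf{The base of the induction (more serious).} The six-vertex formula does not describe the shadows of the prefixes of length at most $n$. Those prefixes are $(k,0)$, and their shadows are the quadrilaterals of Lemma \ref{convexlemma}. Plugging $(2,0)$ into the six formulas returns $(3,-2)=(2,1)=s_0s_1s_0$ and $(0,2)=s_1s_0$, neither of which lies below $s_0s_1$. In fact the six alcoves produced are exactly the shadow of $s_0s_1s_0$, not of $s_0s_1$. So ``$P_k$ is the hexagon attached to the prefix'' is wrong on the whole increasing part of the gallery, and your invariant that $P_k$ equals the shadow of the prefix fails there.

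The fix is to run the induction over $m$ with $n$ fixed, starting from Lemma \ref{convexlemma} for the shadow of $(n,0)$. The step $(n,0)\to(n,1)$, where the quadrilateral first becomes a hexagon, must be handled for every $n\ge 2$, split by the parity of $n$. That is an infinite family resting on the induction of Lemma \ref{convexlemma}, not a few small base cases near $n=2$, $m=1$ to check by hand. With that base in place, your inductive step is the paper's ``very similar'' argument with six channel segments.
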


To prove this pattern, we use induction, and a corollary of Theorem 7.1 in \cite{SHA}, which tells us that
\begin{equation}\tag{$\star$}
    \text{Sh}(xs_i)=\text{Sh}(x)\cup \{us_i\mid u\in \text{Sh}(x)\}.
\end{equation}

So the shadow of an element $xs_i$ is the shadow of $x$, plus any alcoves not in the shadow of $x$ that share a panel of type $i$ with an alcove in the shadow of $x$. 

Now this proposition only holds for $(n,m)$ with $n\geq 2$ and $m\geq 1$. Let us see what occurs in the other, simpler cases. For $(0,0)$, the shadow is trivially the identity alcove. For $(1,0)$, the shadow is the identity alcove, and the alcove $(1,0)$. Now for arbitrary $(n,0)$ and $(1,m)$, the shadow is the convex hull of four alcoves. These two cases are completely symmetrical, as we can permute the generating elements $s_1$ and $s_2$ to map the $(1,m)$ case to $(m+1,0)$. So, before proving Proposition \ref{convex},  we prove the next lemma, which describes the 4 boundary alcoves of $(n,0)$ in terms of $n$.

\begin{lemma} \label{convexlemma}
    Consider an alcove $w=(n,0)$, with $n\geq 2$. Then the shadow of $w$ with respect to the trivial orientation is the convex hull of the following alcoves:
    \begin{align}
    &(n,0),\\
    &(2,1-n),\\
    &(0,2-n),\\
    &(2-n-\epsilon_n,n-1).
    \end{align}

\end{lemma}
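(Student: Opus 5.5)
We argue by induction on $n\geq 2$, using the recursion $(\star)$, $\Sh{xs_i}=\Sh{x}\cup\{us_i\mid u\in\Sh{x}\}$. The alcove $(n,0)$ is the end of the straight gallery $\gamma_0(n)$ in the $\beta$-tunnel containing the identity. We then have $(n+1,0)=\gamma_0(n)s_{n}$ in even/odd notation (the odd coordinates $(n+1,0)$ denote the same alcove as the even ones $(n+2,-1)$). So each step of the induction appends one generator $s_{n}$ (indices mod $3$) to the word. Working with $n$ of both parities simultaneously is natural here, since the formula $(2-n-\epsilon_n,n-1)$ already uses the parity of $n$.

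\emph{Base case.} For $n=2$ the shadow of $s_0s_1$ is $\{1,s_0,s_1,s_0s_1\}$. We would check directly, using Algorithm \ref{algorithm} and the involution of Section \ref{involution}, that these four alcoves are $(2,0)$, $(2,-1)$, $(0,0)$ and $(0,1)$. These match the listed corners $(n,0),(2,1-n),(0,2-n),(2-n-\epsilon_n,n-1)$ at $n=2$. It is also worth checking $n=3$ by hand to see the parity alternation.

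\emph{Inductive step.} Assume $\Sh{(n,0)}$ is the convex hull $P_n$ of the four stated alcoves. Geometrically, $P_n$ is a trapezium or parallelogram bounded by hyperplanes in the three root directions, and its sides are segments of channel and tunnel boundaries. By Lemma \ref{boundary}, each side of such a boundary consists of panels of a single type. By Lemma \ref{type} we can identify which sides of $P_n$ consist of panels of type $s_{n}$ (the generator appended next). Then $(\star)$ says that $\Sh{(n+1,0)}$ is $P_n$ together with the reflections of its boundary alcoves across exactly those $s_n$-panels. We expect this to push out two sides of the region. One is the side through $(n,0)$ and $(2,1-n)$, moving along the $\beta$-tunnel direction. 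The other is the side through $(0,2-n)$ and $(2-n-\epsilon_n,n-1)$, moving along the symmetric direction. This matches the symmetry of Lemma \ref{symmetry} about the hyperplane of the $s_0$-panel of the identity. The new corners can then be computed with the tunnel criteria (Lemma \ref{betatunnels} and the $\alpha$-tunnel lemma) and the channel equations (Proposition \ref{alphaplusbeta} and its analogues), giving $(n+1,0)$, $(2,-n)$, $(0,1-n)$ and $(1-n-\epsilon_{n+1},n)$.

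\emph{Main obstacle.} The delicate point is showing that every alcove on each side of $P_n$ adjacent to the new region has its outward panel of type $s_n$, while the remaining sides do not. We must also check that the added alcoves fill exactly one new row, forming a convex set, rather than a jagged strip. We would handle this by a parity split on $n$ modulo $6$ (or modulo $2$ together with the three-cycle of panel types), using Lemma \ref{type} ($j\equiv m+2n \pmod 6$) to read off the panel types along each boundary tunnel. We would then verify that upward- and downward-facing alcoves alternate along each side, so that reflecting the alcoves whose exterior panel has type $s_n$ produces a row whose convex hull is exactly $P_{n+1}$. Finally, converting between odd and even coordinates with the involution, and simplifying with $\epsilon_n+\epsilon_{n+1}=1$, yields the stated four corners.
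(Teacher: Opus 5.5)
Your framework matches the paper's. You induct via $(\star)$, treat the sides of the hull as channel segments so that each side has a single panel type (Lemma \ref{boundary}), and use Lemma \ref{type} with the symmetry of Lemma \ref{symmetry} to decide which sides have type $s_n$. The gap is the content you predict for the inductive step, which carries the whole proof. It is false that the sides (1)--(2) and (3)--(4) are pushed out while the others stay fixed, and the correct answer depends on the parity of $n$.

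Your own target corners already contradict the prediction:
\begin{itemize}
\item In both parities, (2) and (3) move from $(2,1-n),(0,2-n)$ to $(2,-n),(0,1-n)$, one step left in their $(\alpha+\beta)$-tunnels. So the short side (2)--(3), an $(\alpha+\beta)$-channel segment, is always pushed, and you omit it.
\item For $n$ odd, (4) moves from $(1-n,n-1)$ to $(1-n,n)$, one step right in its tunnel. So the long side (1)--(4), the $(\alpha+\beta)$-channel through $(n,0)$, is pushed.
\end{itemize}
Concretely, index the $(\alpha+\beta)$-channel through $(0,c)$ by $c$, and take the tunnels with first coordinate $6,4,2,0,-2,-4$. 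The shadow of $(5,0)$ occupies $c\in\{2\},[-1,2],[-3,2],[-3,2],[-1,2],\{2\}$ in these tunnels. The shadow of $(6,0)$ occupies $c\in[2,3],[-1,3],[-4,3],[-4,3],[-1,3],[2,3]$. So from $n=5$ to $n=6$ the sides (1)--(2) and (3)--(4) stay in the same $\alpha$- and $\beta$-channels; only (1)--(4) and (2)--(3) move.

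The correct step, as in the paper, is anchored at the corner alcove (1), since the crossed $s_n$-panel is one of its panels.
\begin{itemize}
\item For $n$ even, $(n,0)$ faces down and the crossed panel is its top panel, on (1)--(2). The type then transfers around corner (2), which is upward facing and whose outward panel lies on both adjacent channel boundaries, to (2)--(3), and by symmetry to (3)--(4). The side (1)--(4) contains a different panel of (1), so it is not pushed. Three sides move.
\item For $n$ odd, the crossed panel lies on (1)--(4). The sides (1)--(2) and (3)--(4) contain another panel of (1), respectively its mirror image, so they cannot have type $s_n$. The side (2)--(3) does have type $s_n$, via Lemma \ref{type} comparing $(n+1,-1)$ with $(2,1-n)$.
\end{itemize}

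Your description of $P_n$ as a trapezium bounded by hyperplanes, with sides that are channel and tunnel boundaries, is also inaccurate in a way that matters. All four sides are channel segments: (1)--(2) is an $\alpha$-channel, (3)--(4) a $\beta$-channel, and (2)--(3) and (1)--(4) are $(\alpha+\beta)$-channels. These run perpendicular to the walls, not along them. Along a wall the panel types cycle through $s_0,s_1,s_2$, so $(\star)$ could never push such a side out uniformly; Lemma \ref{boundary} gives a constant type only along channel boundaries. To repair the proposal, replace the uniform two-sides prediction with the parity-dependent analysis above.
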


To prove this lemma, we make an induction argument. We assume that the lemma holds for the $(n,0)$ alcove, and show that this shadow has a boundary made of channel segments. We then consider when $n$ is even and when $n$ is odd. Let us see an example. Starting with the alcove $(6,0)$, as 6 is even, we need to add the following yellow alcoves to form the shadow of $(7,0)$:  

\begin{center}
\resizebox{0.3\textwidth}{!}{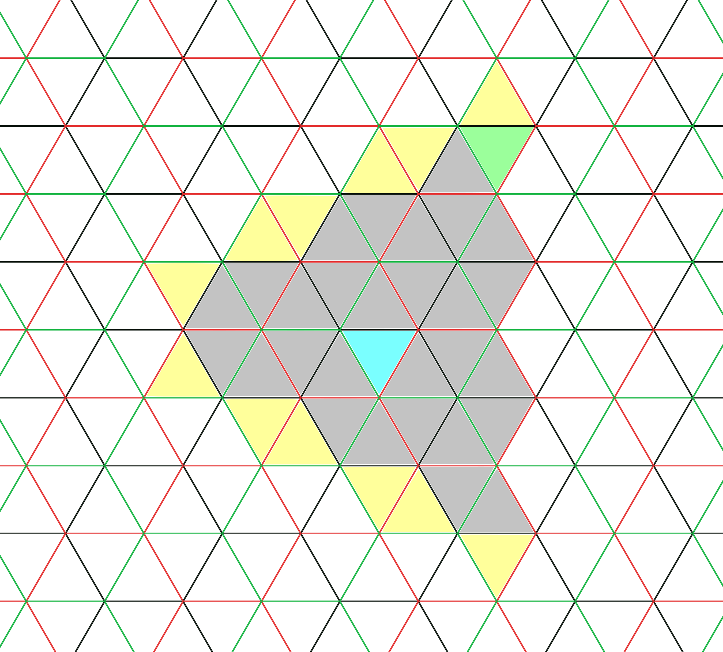}
\end{center}

Now, when we move from $(7,0)$ to $(8,0)$, as 7 is odd, we argue that we need to add these alcoves highlighted in yellow:

\begin{center}
\resizebox{0.3\textwidth}{!}{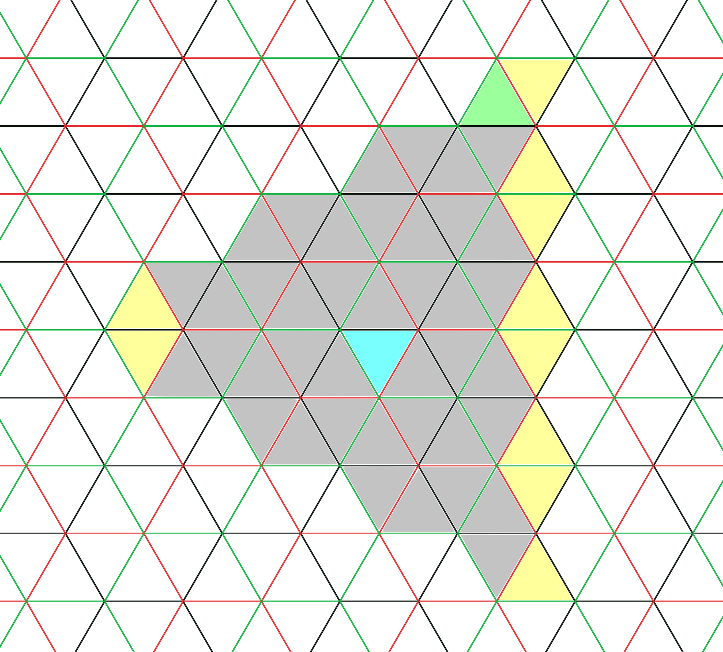}
\end{center}

\begin{proof}
    We proceed by induction. Let us start with the base case of $n=2$. Then the shadow of the alcove $(2,0)$ can be easily computed as the following picture, with the coordinates labelled.

\begin{center}
\resizebox{0.3\textwidth}{!}{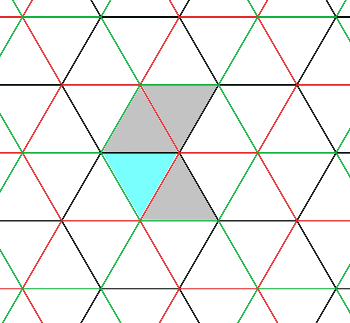}
\end{center}

     Now the set of alcoves given by the lemma is $(2,0), (2,-1), (0,0),$ and $(0,1)$. We clearly obtain the same set as the shadow of the alcove. 

     Now we assume that the lemma holds for the alcove $(n,0)$. We first conclude that the boundary of this shadow is made of segments of channels, by using the Propositions and Corollaries \ref{alphaplusbeta}--\ref{alpha}. We conclude that alcove $(1)$ and alcove $(2)$ lie in a channel, alcoves $(2)$ and $(3)$ lie in a channel, alcoves $(3)$ and $(4)$ lie in a channel, and $(1)$ and $(4)$ lie in a channel.

     For instance, alcoves $(1)$ and $(4)$ have coordinates $(n_1,m_1)=(n+\epsilon_n,-\epsilon_n)$ and $(n_2,m_2)=(2-n-\epsilon_n,n-1)$, respectively. These satisfy the equation
     \[n_1-n_2=2(m_2-m_1),\]
     and so, by Lemma \ref{tunnels}, these lie in the same $(\alpha+\beta)$-channel.

     Let us assume that the lemma is true for the alcove $w=(n,0)$. We show that the lemma must also be true for the alcove $ws_i=(n+1,0)$. Now we split into two cases depending on the parity of $n$.

     Case 1: $n$ is even.

     Then the alcove $(n,0)$ is downwards facing. When we move to $(n+1,0)$, we are crossing over the panel $p$ at the top of the $(n,0)$ alcove. Say this has type $s_i$. Now, as it is at the top of the alcove, it is contained in the boundary of the channel segment containing alcoves (1) and (2). Now, by Lemma \ref{type}, this whole boundary has the same type as panel $p$. So $\Sh{ws_i}$ must contain every alcove that shares a panel with this channel segment. 

    Now we look at the channel segment between alcoves (1) and (4). The panels of the outside boundary of this channel segment cannot have the same type as panel $p$, as it contains another panel of alcove $(n,0)$. 

    Next, let us argue that the boundary of the channel segment between alcoves (2) and (3) has the same type as panel $p$. By Lemma \ref{type}, alcove (2) is an upwards facing alcove. Now the channel containing (1) and (2) is an $\alpha$-channel, and the channel containing (2) and (3) is an $(\alpha+\beta)$-channel. So the panel of alcove (2) in the boundary of the $\alpha$-channel is also the panel in the boundary of the $(\alpha+\beta)$-channel. So the boundary of the channel segment between (2) and (3) has the same type. So any alcove sharing a panel on this boundary must be included in the shadow of $(n+1,0)$. 
    
    Now, by using the line of symmetry of the shadow by Lemma \ref{symmetry}, and by using the fact that panel types are preserved under reflections in hyperplanes, we conclude that the boundary of the channel segment between alcoves (3) and (4) also has the same type of panels as $p$. Hence, any alcove sharing a panel on this boundary must also be included in the shadow of $(n+1,0)$. 

    So we conclude that the shadow of $(n+1,0)$ is the shadow of $(n,0)$ plus the alcoves sharing a panel with the channel segments between (1) and (2), (2) and (3), and (3) and (4).
\begin{center}
\resizebox{0.3\textwidth}{!}{\input{Pictures/use61457.eps_tex}}
\end{center}

Now it is obvious that, moving to the next set of four alcoves, the convex hull is exactly the shadow for $(n,0)$ and these new channel segments.

Case 2: $n$ is odd.

A very similar argument can be made. Now $(n,0)$ is an upwards facing alcove, and the panel $p$ between $(n,0)$ and $(n+1,0)$ lies on the boundary between alcoves (1) and (4). Say this panel has type $s_i$. Then the boundary between alcoves (1) and (4) must only have panels of type $s_i$. 

Now the boundary of the channel segment between alcoves (1) and (2) contains another panel of alcove (1), and so cannot have type $s_i$. Then, by the line of symmetry from Lemma \ref{symmetry}, the boundary of the channel segment between alcoves (3) and (4) also cannot have type $s_i$. So we do not need to add the boundary of these channel segments.

Lastly, we consider the boundary of the channel segment between alcoves (2) and (3). We show that this boundary has type $s_i$, by using Lemma \ref{type} to show that the panel to the left in alcove (2) has type $s_i$.

We claim that, if alcove $(n+1,-1)$ has type $j$, alcove $(2,1-n)$ has type $j+1$. This follows easily from Lemma \ref{type}.

Now, by looking at all possible types of alcove (1), and remembering that alcove (1) is downwards facing, we conclude that the right panel of alcove (1) has the same type as the left panel of alcove (2). Hence, we conclude that we must add the boundary of the channel segment between alcoves (2) and (3). 

So, overall, we are adding the following alcoves:

\begin{center}
\resizebox{0.3\textwidth}{!}{\input{Pictures/use61458.eps_tex}}
\end{center}

Now again, it is obvious that moving to the next set of four alcoves gives the correct convex hull.

\end{proof}

Now Proposition \ref{convex} can be proved, in a very similar way, except this time we have to consider 6 channel segments on the boundary. The proof is omitted for space.

\subsection{Number of elements in the shadow}

In this section, for each alcove, we choose the representation that has nonnegative second coordinate.  Now we see that the shadow of each alcove can be described by specifying the number of elements in each $(\alpha+\beta)$-channel. We use the line of symmetry of the hyperplane defined by the $s_0$-panel of the identity alcove to determine exactly which alcoves are in the shadow. 

We will use Proposition \ref{convex} to calculate the sizes of the channels. For instance, here is the shadow of the alcove defined by $(6,3)$:

\begin{center}
\includegraphics[scale=0.2]{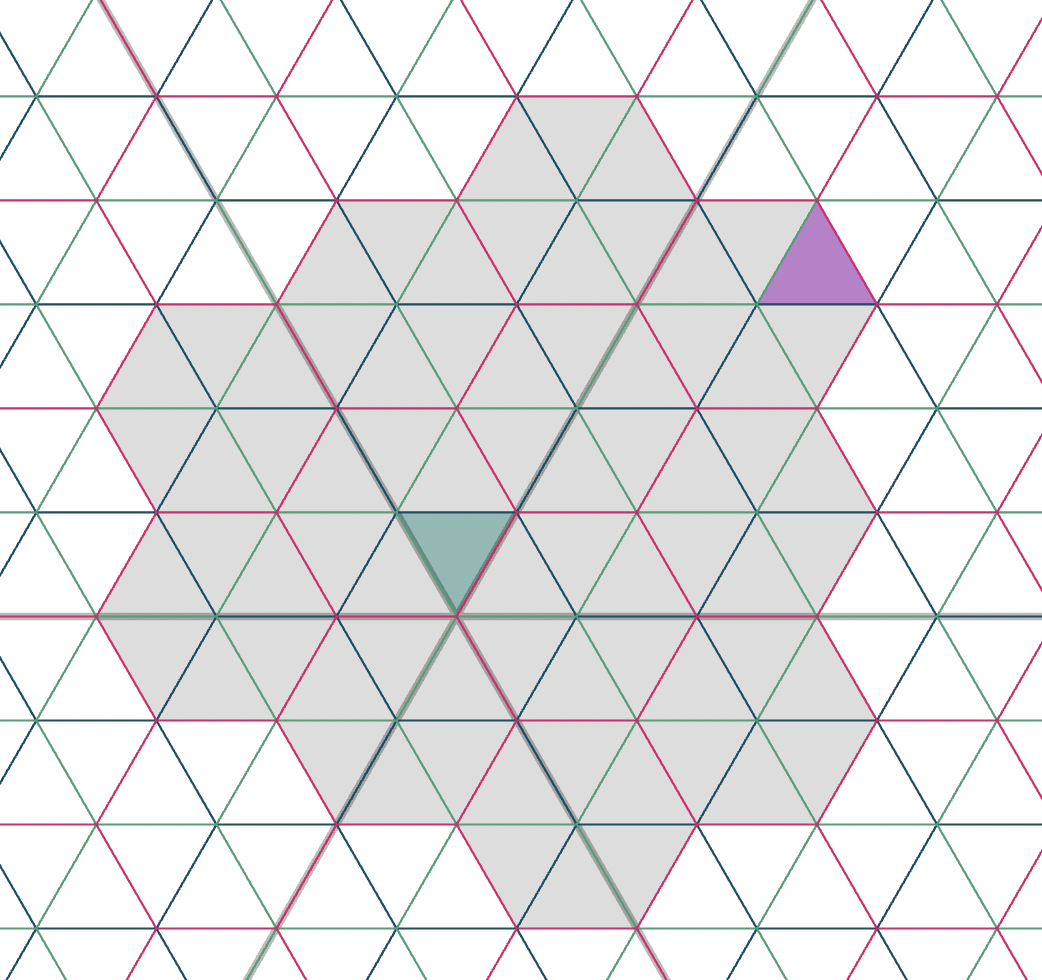}
\end{center}

Now, starting from the channel containing the alcove $(6,3)$ and moving to the left, the number of elements in each $(\alpha+\beta)$-channel is 
\[6,6,6,8,8,8,6,6,6,4,4,4.\]
This pattern seen here holds for every alcove $(n,m)$ with $n$ even and $m$ odd.

A slightly different pattern emerges when $m$ is even. Here is an example:
\begin{center}
\includegraphics[scale=0.18]{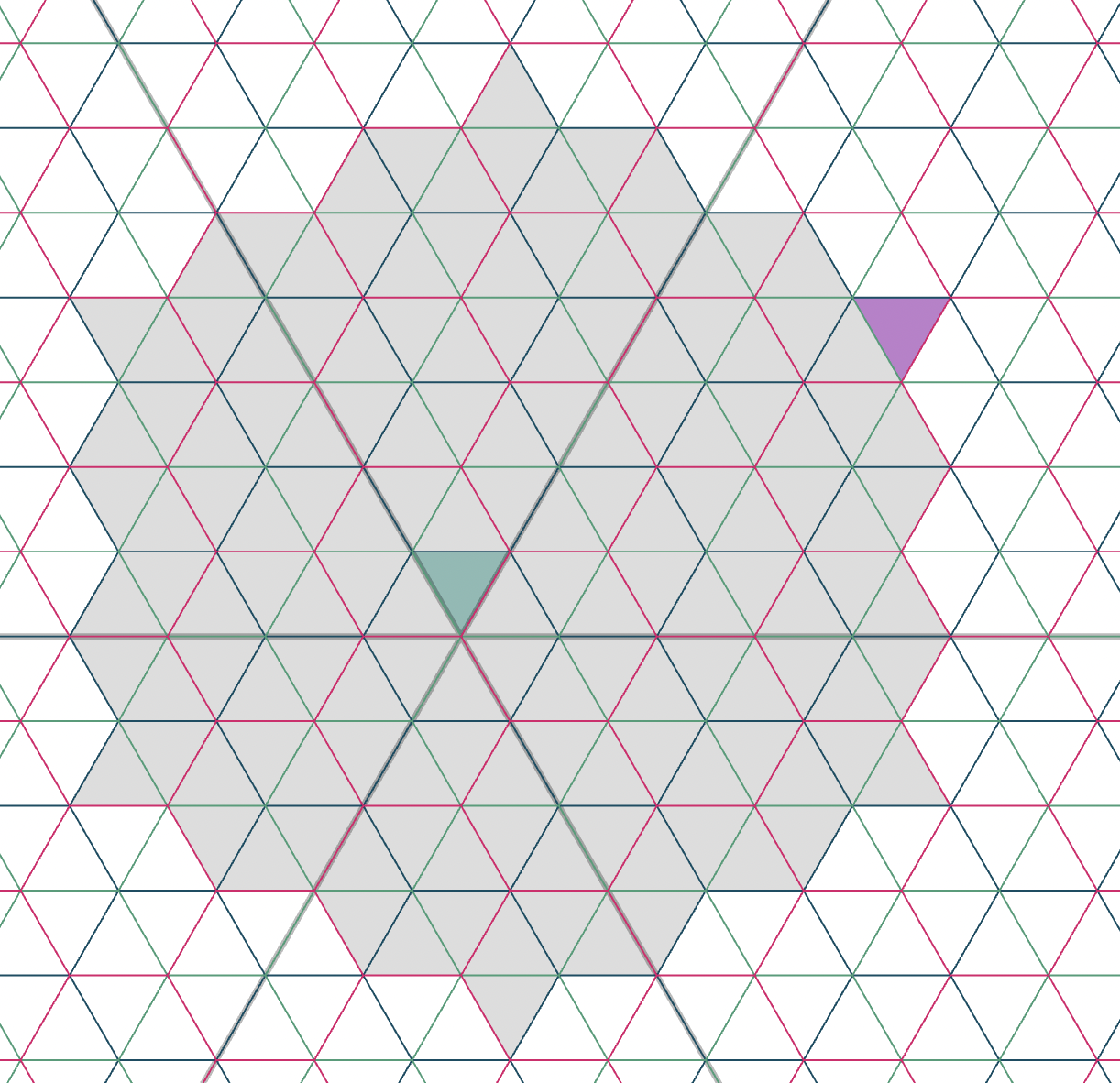}
\end{center}

Now starting from the $(\alpha+\beta)$-channel containing the alcove $(6,6)$, moving to the left the number of elements in each channel is 
\[6,6,8,8,8,10,10,10,12,10,10,10,8,8,8,6,6.\]
This pattern seen here holds for every alcove $(n,m)$ with $n$ and $m$ even.

Very similar patterns occur when $n$ is odd, except that the starting alcove's channel now appears not to the right of any other non-empty channel, but somewhere in the middle. Here is an example, where our starting alcove is $(5,7)$.
\begin{center}
\includegraphics[scale=0.2]{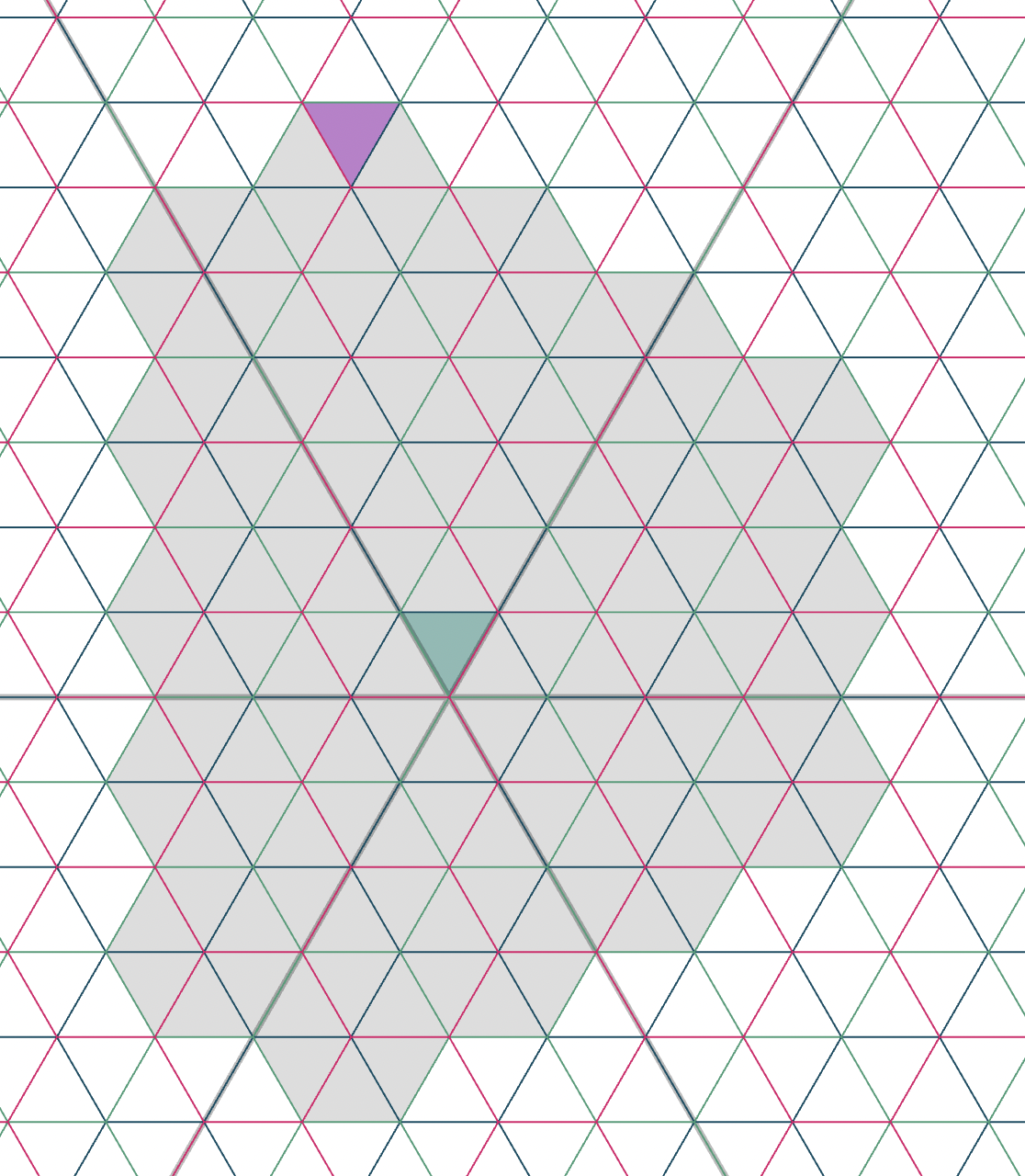}
\end{center}
Here, the size of each channel, starting from the left and denoting the channel containing $(0,0)$ by bold face, is
\[10,10,10,12,12,12,\mathbf{10},10,10,8,8,8,6,6,6.\]

Calculating this string of numbers significantly reduces the computation required to conclude whether a particular alcove lies in the shadow. For instance, say we want to conclude whether the alcove $(-4,6)$ lies in the shadow of $(5,7)$. We note, by Proposition \ref{alphaplusbeta}, $(-4,6)$ lies in the same $(\alpha+\beta)$-channel as the alcove $(0,4)$. Then, by the sequence above, the shadow of $(5,7)$ contains 8 elements in this  $(\alpha+\beta)$-channel. As we have a symmetry in the shadow, these 8 elements lie in the $(\alpha+\beta)$-tunnels defined by $n\in\{-6,-4,-2,0,2,4,6,8\}$. Therefore, $(-4,6)$ lies in the shadow of $(5,7)$.

\begin{proposition}\label{numchannels}
Let $(n,m)$ be an alcove in the $s_0$-region, written with nonnegative second coordinate, and suppose that $n\geq 2$, $m\geq 0$.Consider the shadow of this alcove, and define the sequence of the size of the channels to be the ordered list of the sizes of all nonempty $(\alpha+\beta)$-channels in the shadow.
If $m=0$, and $n$ is even, the sequence of the size of the channels is
\[2^{(3)} ,4^{(3)} ,6^{(3)},\hdots,n-2^{(3)},\mathbf{n}^{(2)},\]
where the bold face represents the channel that contains $(n,m)$, and the superscripts represent the multiplicity. 
If $m=0$, and $n$ is even, the sequence of the size of the channels is
\[2^{(2)} ,4^{(3)} ,6^{(3)},\hdots,n-3^{(3)},\mathbf{n+1}^{(1)}.\]

Now suppose $m>0$. If $n$ and $m$ are even, the sequence of the size of the channels is 
    \[m^{(2)} ,m+2^{(3)} ,m+4^{(3)},\hdots, n+m-2^{(3)},n+m,n+m-2^{(3)},n+m-4^{(3)},\hdots, n+2^{(3)},n,\mathbf{n},\]
    
    If $n$ is even and $m$ is odd, the sequence of the size of the channels is
    \[m+1^{(3)},m+3^{(3)},\hdots,n+m-1^{(3)}, n+m-3^{(3)},\hdots, n,n,\mathbf{n}.\]

    If $n$ is odd and $m$ is even, the sequence of the size of the channels is 
    \[m+2^{(2)} ,m+4^{(3)} ,m+6^{(3)},\hdots, n+m-1^{(3)},\mathbf{n+m+1},n+m-1^{(3)},n+m-3^{(3)},\hdots, n+3^{(3)},n+1^{(2)}.\]
    If $n$ and $m$ are odd, the sequence of the size of the channels is
    \[m+3^{(3)},m+5^{(3)},\hdots,n+m,\mathbf{n+m},n+m, n+m-2^{(3)},\hdots, n+1^{(3)}.\]

\end{proposition}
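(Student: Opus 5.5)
The plan is to read the channel sizes directly off the convex-hull description of the shadow (Proposition \ref{convex} for $m\geq 1$, Lemma \ref{convexlemma} for $m=0$), using the coordinate description of $(\alpha+\beta)$-channels from Proposition \ref{alphaplusbeta}. Since the shadow is a convex region bounded by six (respectively four) channel segments, its intersection with a fixed $(\alpha+\beta)$-channel is a contiguous run of alcoves. Each $(\alpha+\beta)$-channel meets each $(\alpha+\beta)$-tunnel exactly once (Proposition \ref{once}), and the tunnels are indexed by the first coordinate. So the size of a channel in the shadow equals the number of admissible tunnels, i.e. of even first coordinates $n'$, for which the unique channel alcove $(n',k-n'/2)$ lies in the hull. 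By the symmetry of Lemma \ref{symmetry} across the $s_0$-hyperplane of the identity, which preserves each $(\alpha+\beta)$-channel and acts on tunnels by $n'\mapsto 2-n'$, this run is symmetric about $n'=1$. Hence it has the form $2-K\le n'\le K$, and its size is determined by the largest tunnel index $K$ that it reaches on one side.

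The computation proceeds in three steps. First, I convert the six hull vertices of Proposition \ref{convex} into even coordinates via the involution of Section \ref{involution}, and compute the channel index $k=(n'+2m')/2$ of each. This locates the extreme nonempty channels, and also the channel of $(n,m)$, marked in bold. Second, for each $k$ between consecutive vertex channels, I determine which boundary segment gives the extreme tunnel on the upper side. The segments are channel segments of the three root directions. By the corollaries to Propositions \ref{alphaplusbeta}--\ref{alpha}, along an $\alpha$- or $\beta$-channel segment, moving one $(\alpha+\beta)$-channel changes the extreme tunnel index in a $2,0,0$ pattern (because of the $\epsilon_m$ terms). Along an $(\alpha+\beta)$-segment, the channel index does not change. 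This is exactly what produces the multiplicities $^{(3)}$, with the step of $2$ in size occurring every third channel. The increasing segment gives the ascending block $m,m+2,\dots$, and the opposite boundary gives the descending block. Third, I handle the endpoints: the truncated multiplicities $^{(2)}$ or $^{(1)}$ and the isolated central or bold terms. These are computed by checking the first and last few channels directly against the vertex list, split by the parities of $n$ and $m$ as in the statement.

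For $m=0$ I use Lemma \ref{convexlemma} in the same way. There are only four vertices, so the sequence is monotone. The parity of $n$ decides whether the last channel, the one containing $(n,0)$, has multiplicity $2$ or $1$. I read the second "$n$ is even" case in the statement as the odd case.

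The main obstacle is the bookkeeping in the second step. The staircase effect of the $\epsilon$-terms must be aligned with the channel indices so that the triples start and end at exactly the right channels in each of the four parity cases. A single off-by-one shift here changes which endpoint multiplicities appear. To control this, I would first verify the pattern against the worked examples $(6,3)$, $(6,6)$ and $(5,7)$. I would then prove the general case by induction on $m$ using $(\star)$: passing from $(n,m)$ to $(n,m+1)$ adds one channel segment along the newly exposed boundary of the matching panel type, as in the proof of Lemma \ref{convexlemma}. The change to the sequence should then be either a new leading entry or an increase of a block by $2$, matching the passage between the parity formulas.
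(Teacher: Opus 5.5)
Your Steps 1--3 are what the paper intends. Its whole proof is the remark that the result follows from Proposition~\ref{convex} and Lemma~\ref{convexlemma} by an inductive argument. Your ingredients are a sound way to carry this out:
the reflection in the $s_0$-wall of the identity fixes every $(\alpha+\beta)$-channel and sends tunnel $n'$ to $2-n'$, so each channel meets the shadow in a run $2-K\le n'\le K$ of size $K$;
the two vertical sides of the hull are the extreme channels;
and along an $\alpha$- or $\beta$-side, $K$ rises by $2$ once every three channels.

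The step that would fail is the induction on $m$ you propose for the general case. By $(\star)$, passing from $w$ to $ws_i$ adds a layer along \emph{every} side of the hull whose exterior panels have type $s_i$. The proof of Lemma~\ref{convexlemma} already adds layers along two or three sides at once. So the sequence does not change by ``a new leading entry or an increase of a block by $2$''. For instance, take the step from $(6,3)$ to $(6,4)$. The sizes $4^{(3)},6^{(3)},8^{(3)},6^{(3)}$ on the channels through $(0,c)$, $-5\le c\le 6$, become $4^{(2)},6^{(3)},8^{(3)},10,8^{(3)},6,\mathbf{6}$ on $-6\le c\le 7$. New channels appear at both ends, and seven of the old channels each gain $2$, in a $0,+2,+2$ rhythm along both slanted sides. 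An induction built on your description would therefore not reproduce the parity formulas.

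You do not need the induction at all. Proposition~\ref{convex} already describes the hull for every $(n,m)$ with $n\ge 2$, $m\ge 1$, so the direct read-off from the six vertices is a complete argument. The one point needing care is that a boundary $\alpha$- or $\beta$-channel meets only two of every three $(\alpha+\beta)$-channels. For each skipped channel you must check which of its two nearby alcoves lies inside the hull; this fixes where each triple begins, and hence the end multiplicities.

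Doing this for $m=0$ and $n$ odd will expose a second typo in the statement: the sequence ends $(n-1)^{(3)},\mathbf{n+1}$, not $(n-3)^{(3)},\mathbf{n+1}$. For $(5,0)$, the vertices of Lemma~\ref{convexlemma} lie in the channels through $(0,-3)$ and $(0,2)$. This gives $2^{(2)},4^{(3)},\mathbf{6}$, i.e.\ $22$ elements, which is indeed the size of $[1,s_0s_1s_2s_0s_1]$.
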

The proof of this proposition follows immediately from Proposition \ref{convex} and Lemma \ref{convexlemma}, using an inductive argument.

This notation is useful as it immediately allows us to calculate the size of the shadow. Summing up the sequences above, we get the following corollary.

\begin{corollary}
    Let $S_{n,m}$ be the size of the shadow of the alcove $(n,m)$, with $n,m\geq 2$. Then
    \[S_{n,m}=\begin{cases}
        \dfrac{12nm+3n^2+3m^2-6n-6m}{4}& \text{ if $n$ even and $m$ even,}\\
        \dfrac{12nm+3n^2+3m^2-6n-12m+9}{4}& \text{ if $n$ even and $m$ odd,}\\
        \dfrac{12nm+3n^2+3m^2-6m-11}{4}& \text{ if $n$ odd and $m$ even,}\\
        \dfrac{12nm+3n^2+3m^2-12m-6}{4}& \text{ if $n$ odd and $m$ odd.}
            
    \end{cases}\]
\end{corollary}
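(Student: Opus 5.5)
The corollary should follow by summing the channel-size sequences of Proposition \ref{numchannels} term by term, one parity case at a time. No new geometry is needed. By Proposition \ref{alphaplusbeta}, every alcove lies in exactly one $(\alpha+\beta)$-channel, so $S_{n,m}$ is exactly the sum of the entries of the relevant sequence. In each case I will write that sequence as a short list of blocks: a few isolated terms, plus one or two arithmetic progressions in which each value is repeated three times. Each block can then be summed with $\sum_{k=1}^{K}k=K(K+1)/2$.

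\textbf{Case $n$ and $m$ even.} Here the sequence consists of:
\begin{itemize}
\item the term $m$, twice;
\item the values $m+2k$ for $1\le k\le (n-2)/2$, each three times;
\item the peak $n+m$, once;
\item the values $n+2j$ for $1\le j\le (m-2)/2$, each three times (this is the descending run);
\item the term $n$, twice.
\end{itemize}
This reading matches the displayed example for $(6,6)$. Therefore
\[
S_{n,m}=2m+3\sum_{k=1}^{(n-2)/2}(m+2k)+(n+m)+3\sum_{j=1}^{(m-2)/2}(n+2j)+2n,
\]
and expanding should give $(12nm+3n^2+3m^2-6n-6m)/4$. As a check, $(6,6)$ gives $144$ both from this formula and from the example sequence.

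\textbf{Case $n$ even, $m$ odd.} The blocks are:
\begin{itemize}
\item the ascending values $m+1,m+3,\dots,n+m-1$, each three times;
\item the descending values $n+m-3,\dots,n+2$, each three times;
\item a final block $n,n,n$.
\end{itemize}
Summing gives the second formula, and $(6,3)$ gives $72$ both ways.

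\textbf{The two odd-$n$ cases.} These are handled in the same way. The peak block sits in the middle, at the bold channel containing $(0,0)$, and I will check the end multiplicities, the $(2)$'s and $(3)$'s, against the $(5,7)$ example.

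The main obstacle is bookkeeping, not algebra. I need the exact number of terms in each progression and the multiplicities at the two ends and at the peak. The statement of Proposition \ref{numchannels} is terse and, in places, ambiguous: for instance, both $m=0$ lines say "$n$ even", and the descending runs are indicated only by "$\hdots$". So I will first fix the precise block structure in each parity case, cross-checking it against the convex-hull description of Proposition \ref{convex}, where each $(\alpha+\beta)$-channel meets the hexagon in a segment whose length changes by $2$ every three channels. I will then verify each closed form on one or two small values, such as $(2,2)$, $(2,3)$, $(3,2)$, $(3,3)$ and the worked examples. This step matters because an off-by-one error in a block length shifts the linear terms, and the linear terms are exactly where the four formulas differ. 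The restriction $n,m\ge 2$ ensures that every block is present and nonempty.
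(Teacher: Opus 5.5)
Your proposal is correct and follows the paper's own argument: the paper derives this corollary simply by summing the channel-size sequences of Proposition~\ref{numchannels}, and your block decompositions give the four stated formulas (your even-$n$ sums are right, and the analogous odd-$n$ sums also check out). One small correction: $n,m\geq 2$ does \emph{not} make every block nonempty (for instance, both triple-runs are empty for $(2,2)$, and the descending run is empty for $(6,3)$), but this is harmless because empty sums are zero and your closed forms such as $(n-2)(2m+n)/4$ vanish in exactly those cases.
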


We can also conclude from Proposition \ref{numchannels} the following proposition, which tells us how many elements are in each channel within the shadow. 

\begin{proposition}
    Let $m,n\in\mathbb{Z}$ such that $n\geq 2$ and $m\geq 0$. If $m=0$, and $4-2n\leq 2c\leq  n-\epsilon_n$, let 
    \[k(n,m,c)=
2\left\lfloor
\dfrac{n+c+1+\epsilon_n}{3}
\right\rfloor.
\]
    
If $m\neq 0$, $4-2n-m+\epsilon_m
\leq 2c \leq
n-m-2\epsilon_n+3\epsilon_m
-\epsilon_n\epsilon_m$, let \[
k(n,m,c)=
2\left\lfloor
\dfrac{
2n+4m+2c-2+6\epsilon_n
}{6}
\right\rfloor,\]

and if $m\neq 0$, $n-m+2-3\epsilon_n+3\epsilon_m
\leq 2c \leq
n+2m-3\epsilon_n,$ let
\[k(n,m,c)=
2\left\lfloor
\dfrac{
4n+2m-2c+2-2\epsilon_m
}{6}
\right\rfloor.\]
Otherwise, let $k(n,m,c)=0$. Then the $(\alpha+\beta)$-channel containing $(0,c)$ has  $k(n,m,c)$ elements in the shadow of $(n,m)$.

\end{proposition}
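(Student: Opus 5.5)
The plan is to derive the formula for $k(n,m,c)$ directly from the channel-size sequences of Proposition \ref{numchannels}. The key step is indexing those sequences by the parameter $c$ of the $(\alpha+\beta)$-channel through $(0,c)$ (Proposition \ref{alphaplusbeta}), and then checking that each piecewise floor expression reproduces the plateaus of multiplicity three.

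The first step is to locate the distinguished channels in terms of $c$. The channel containing $(n,m)$ with $n$ even has $c=(n+2m)/2$ by Proposition \ref{alphaplusbeta}. When $n$ is odd, I will first apply the involution of Section \ref{involution} to obtain even coordinates, and then compute $c$. Moving one channel to the left decreases $c$ by one. So the ordered sequence in Proposition \ref{numchannels}, read from the channel of $(n,m)$ leftwards, becomes a function of $c$ on an interval $[c_{\min},c_{\max}]$. The endpoints of this interval should match the bounds $4-2n-m+\epsilon_m\le 2c$ and $2c\le n+2m-3\epsilon_n$ in the statement (or $4-2n\le 2c\le n-\epsilon_n$ when $m=0$).

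To obtain these endpoints, I will use the convex hull vertices of Proposition \ref{convex} and Lemma \ref{convexlemma}. I will convert each vertex to even coordinates and compute its $c$-value. The extreme $c$-values give the outermost nonempty channels. The middle vertex, where the sequence attains its maximum $n+m$ (or $n+m\pm1$), gives the breakpoint $2c=n-m+O(\epsilon)$ that separates the two regimes of the statement.

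On each side of the breakpoint, the sequence increases (respectively decreases) by $2$ every three channels. This is exactly the behaviour of $2\lfloor(\text{linear in }c)/3\rfloor$ with slope $\pm 1$ in $c$. I will therefore verify the two formulas by checking two things. First, the slopes: the coefficient of $c$ is $+2/6$ on the left regime and $-2/6$ on the right regime. Second, the offsets: I will evaluate each formula at one reference channel, namely the channel of $(n,m)$ for the right regime and the peak channel for the left regime. These evaluations must agree with Proposition \ref{numchannels} in each of the four parity cases of $(n,m)$. Because the floor function has period three in its numerator once scaled, matching the value and the phase of the plateau at one point, together with the plateau lengths (multiplicity three, with the exceptional multiplicities $2$ or $1$ at the ends), determines agreement across the whole interval. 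The $m=0$ case is handled in the same way using Lemma \ref{convexlemma}. Outside the interval the channels are empty, so $k=0$.

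The main obstacle is the bookkeeping at the ends and at the peak. The exceptional multiplicities ($m^{(2)}$, the isolated $n+m$, and the doubled $n,\mathbf{n}$) must fall exactly where the floor expressions change plateau. The $\epsilon_n,\epsilon_m$ correction terms in the bounds and numerators are designed to shift the phase accordingly, and verifying this requires going through the four parity cases separately. I would do this first for the even–even case, using $(6,6)$ as a sanity check, and then adapt the computation to the other three cases.
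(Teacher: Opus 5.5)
Your plan follows the paper's route: the paper gives no separate argument and reads this proposition off the channel-size sequences of Proposition \ref{numchannels}, re-indexed by $c$ via Proposition \ref{alphaplusbeta}, which is the bookkeeping you describe. One correction: when $n$ is odd, the channel of $(n,m)$ is not the rightmost nonempty channel. For $n,m$ both odd it has $2c=n-m-2$ and lies in the left regime; for example $c=-2$ for $(5,7)$, while the shadow occupies $-6\le c\le 8$. So anchor the sequence using the convex-hull vertices, as you also propose, rather than reading leftwards from the channel of $(n,m)$ or using that channel as the reference point for the right regime.
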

Finally, we conclude the following theorem and proposition, which gives an efficient process to compute the shadow of an element. Note that $(i,j)$ lies in the channel containing $(0,(i+2j)/2)$, by Proposition \ref{alphaplusbeta}. So there are $k(n,m,(i+2j)/2)$ elements in the shadow of $(n,m)$ lying in the channel containing $(i,j)$. Then the theorem follows from the symmetry of the shadow, established in Lemma \ref{symmetry}, noting that $i$ is specified to be even.
\begin{theorem}\label{shadowscondition}



Let $(i,j)$ be an alcove written in even-coordinate notation. Then $(i,j)$ is in the shadow of $(n,m)$ if and only if 
    \[2-k(n,m,({i+2j})/{2})\leq i \leq k(n,m,({i+2j})/{2}).\]

\end{theorem}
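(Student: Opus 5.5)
The plan is to reduce membership of $(i,j)$ to a question about a single $(\alpha+\beta)$-channel, and then to locate the shadow's alcoves inside that channel using the symmetry of Lemma \ref{symmetry}. Set $c=(i+2j)/2$. Since $i$ is even, $c$ is an integer. By Proposition \ref{alphaplusbeta}, $(i,j)$ lies in the $(\alpha+\beta)$-channel $C_c$ containing $(0,c)$. By Proposition \ref{once}, this channel meets each $(\alpha+\beta)$-tunnel, that is, each even value of the first coordinate, in exactly one alcove. So the alcoves of $C_c$ are parametrised bijectively by their even first coordinate $i'$, the alcove being $(i',c-i'/2)$. Hence $(i,j)$ is determined within $C_c$ by $i$ alone, and it suffices to show the following: the alcoves of $C_c$ lying in $\Sh{(n,m)}$ are exactly those with even first coordinate in the interval $[2-k,k]$, where $k=k(n,m,c)$. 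The preceding proposition already gives $|C_c\cap \Sh{(n,m)}|=k$, and this $k$ is even. The interval $[2-k,k]$ contains exactly $k$ even integers, so the counts match.

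First, I would show that the intersection is symmetric under $i'\mapsto 2-i'$. The hyperplane $H$ containing the $s_0$-panel of the identity is a wall parallel to the $(\alpha+\beta)$-tunnels; it separates $(0,\ast)$ from $(2,\ast)$. Reflection in $H$ lies in the group defining an $(\alpha+\beta)$-channel, so it maps $C_c$ to itself. On tunnels it sends the tunnel with first coordinate $i'$ to the one with $2-i'$. I would check this in the identity channel using the explicit coordinates $(4t,-2t)$ and $(4t+2,-2t-1)$ from the proof of Proposition \ref{alphaplusbeta}: multiplying by $s_0$ swaps these families. Combined with Lemma \ref{symmetry}, this shows that $C_c\cap\Sh{(n,m)}$ is invariant under $i'\mapsto 2-i'$.

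Second, I would show that $C_c\cap\Sh{(n,m)}$ is a contiguous run in the parameter $i'$. By Proposition \ref{convex}, and Lemma \ref{convexlemma} when $m=0$, the shadow is the set of alcoves in a convex hull. The intersection of a convex region with the strip underlying the channel is convex. The channel's alcoves are ordered monotonically along the strip by $i'$, so the ones in the hull form an interval of consecutive even values. A contiguous run of $k$ even integers symmetric about $1$ must be $\{2-k,\dots,k\}$. This yields the stated inequality, and the case $k=0$ is consistent with the empty interval.

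The main obstacle is the contiguity step. Convexity "of alcoves" in Proposition \ref{convex} must be interpreted so that a channel, whose alcoves meet alternately along panels and at vertices, cannot enter and leave the hull twice. My first attempt would be to argue that each channel is itself a union of consecutive tunnel pieces inside a straight strip bounded by two parallel lines, and that the hull is an intersection of half-planes bounded by channel boundaries, as in the proof of Lemma \ref{convexlemma}. The intersection would then be an interval. Failing that, I would fall back on the inductive description via $(\star)$, checking that each added channel segment extends runs at their ends.

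For the complexity claim, apply the Words-to-Coordinates Algorithm to $x$ and $y$, which takes $O(\ell(y))$ time by Remark \ref{timecomplexity}, since $x\le y$ forces $\ell(x)\le\ell(y)$. Convert coordinates with the involution, and move $y$ into the $s_0$-region by a diagram automorphism if necessary. Finally, evaluate $k$ and the inequality in constant time.
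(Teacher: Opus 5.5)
Your proposal is correct and follows the paper's own argument. You locate $(i,j)$ in the $(\alpha+\beta)$-channel through $(0,(i+2j)/2)$ via Proposition~\ref{alphaplusbeta}, take its $k(n,m,c)$ shadow elements from the preceding proposition, and place them symmetrically under $i\mapsto 2-i$, using Lemma~\ref{symmetry} for the reflection in the wall of the identity's $s_0$-panel. Your one addition is the contiguity step drawn from the convex-hull description in Proposition~\ref{convex} and Lemma~\ref{convexlemma}, which the paper uses only implicitly; it closes a real gap and does not change the route, since any channel alcove lying between two others is contained in the Euclidean convex hull of those two.
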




\begin{corollary}
For reduced words $x,y$, it is possible to determine whether $x\leq y$ in $O(\ell(y))$ time. 
\end{corollary}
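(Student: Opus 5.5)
The plan is to combine three earlier results: the Words-to-Coordinates Algorithm, the membership criterion of Theorem \ref{shadowscondition}, and the identification of the trivial shadow with the Bruhat interval, $\Sh{y}=[1,y]$. Then $x\leq y$ holds if and only if the alcove of $x$ lies in $\Sh{y}$, and it remains to bound the cost of each step.

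First, I would reduce to the case in which $y$ lies in the $s_0$-region. The region of $y$ is read off from the first step of the algorithm, that is, from the parity of the maximal initial decreasing subword and the first letter. This costs $O(\ell(y))$. If $y$ lies in the $s_j$-region with $j\neq 0$, I apply the diagram automorphism cyclically permuting the indices of $s_0,s_1,s_2$, which sends $s_j$ to $s_0$, to both words $x$ and $y$. This automorphism preserves reducedness and Bruhat order, and it costs $O(\ell(x)+\ell(y))$.

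Before computing any coordinates of $x$, I would compare lengths. If $\ell(x)>\ell(y)$ then $x\not\leq y$ and we stop. This check can be done in $O(\ell(y))$ time by reading at most $\ell(y)+1$ letters of $x$. So from now on $\ell(x)\leq\ell(y)$. Next, I run the Words-to-Coordinates Algorithm on both words. By Remark \ref{timecomplexity} this takes $O(\ell(x)+\ell(y))=O(\ell(y))$ time and produces the even coordinates $(i,j)$ of $x$. For $y$, I use the involution of Section \ref{involution} to pass to the representative $(n,m)$ with nonnegative second coordinate required by Proposition \ref{numchannels}; the involution is a constant number of arithmetic operations.

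The degenerate cases are handled directly. These are $y=1$, $(n,m)=(1,0)$, and the small cases excluded by the hypotheses $n\geq 2$, $m\geq 0$. Their shadows are explicitly listed earlier: $\{1\}$, the two alcoves $1$ and $(1,0)$, and the four-alcove convex hull of Lemma \ref{convexlemma}, with $(1,m)$ handled by the symmetry that swaps $s_1$ and $s_2$. Comparing against these lists takes $O(1)$ time.

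In the generic case, I compute $c=(i+2j)/2$, which by Proposition \ref{alphaplusbeta} indexes the $(\alpha+\beta)$-channel of $x$. I then evaluate $k(n,m,c)$. This needs $O(1)$ comparisons to decide which of the three ranges for $2c$ applies, followed by a floor of a linear expression. Finally, I test the inequality $2-k\leq i\leq k$, and Theorem \ref{shadowscondition} gives the answer.

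I count arithmetic on integers of size $O(\ell(y))$ as unit cost, which is the standard convention and the one implicitly used in Remark \ref{timecomplexity}. Under this convention, every step after parsing is $O(1)$, and the total running time is $O(\ell(y))$.

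The main obstacle is justifying the Bruhat interpretation and the reduction to the $s_0$-region cleanly. The theorem identifying the trivial shadow with $[1,y]$ is only cited, via \cite{SHA,MASTERS}. One must also check that the automorphism maps $\Sh{y}$ onto the shadow of the image of $y$, which is clear because it permutes panel types and hence the foldings of minimal galleries. A secondary point is ensuring that the cost of reading $x$ is truly bounded by $\ell(y)$, which is why the length check comes first.
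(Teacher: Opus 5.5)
Your proposal is correct and follows the same route as the paper: reject if $\ell(x)>\ell(y)$, run the Words-to-Coordinates Algorithm on both words in linear time, then apply the constant-time test of Theorem \ref{shadowscondition}. Your reduction to the $s_0$-region, your choice of the representative with nonnegative second coordinate, and your separate treatment of the small cases supply details the paper leaves implicit.

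There are two minor points to fix:
\begin{enumerate}
\item Do the length check before applying the automorphism to $x$. As written, that step costs $O(\ell(x))$, which is not yet bounded by $\ell(y)$.
\item For the family $(1,m)$, test membership in the four-vertex convex hull with a fixed number of inequalities. Comparing against an explicit list is not $O(1)$, because that shadow grows with $m$.
\end{enumerate}
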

\begin{proof}
    If $\ell(x)>\ell(y)$, then $x\not\leq y$. Otherwise, we use the \hyperref[algorithm]{Words-to-Coordinates Algorithm} on both $x$ and $y$, which computes the coordinates of the corresponding alcove in linear time with respect to their lengths. Therefore, this has complexity $O(\ell(y))$. Then Theorem \ref{shadowscondition} determines whether the second alcove lies in the shadow of the first alcove using two inequalities requiring only a fixed number of arithmetic operations and floor functions. Consequently, deciding whether one alcove lies in the shadow of another has linear time complexity with respect to the lengths of the words, with the shadow-membership test itself requiring constant time once the coordinates are known.
\end{proof}

\section*{Acknowledgements}
I would like to express my gratitude to both my supervisors, Yusra Naqvi and Beth Romano, for their support throughout my PhD, and their feedback on drafts of this paper. The software written by Gibson \cite{GIBSON} has been used to generate exemplifying pictures in this paper. This work was supported by the Engineering and Physical Sciences
Research Council [EP/S021590/1], the EPSRC Centre for Doctoral Training in Geometry and
Number Theory (The London School of Geometry and Number Theory), University College
London.


\bibliographystyle{acm}

\bibliography{references}

\bigskip

\small\textsc{Megan Masters} \texttt{megan.masters.22@ucl.ac.uk}

\small\textsc{Department of Mathematics,
University College London,
 WC1E 6BT, UK}

\end{document}

%% file: Pictures/use616241.eps_tex
\begingroup%
  \makeatletter%
  \providecommand\color[2][]{%
    \errmessage{(Inkscape) Color is used for the text in Inkscape, but the package 'color.sty' is not loaded}%
    \renewcommand\color[2][]{}%
  }%
  \providecommand\transparent[1]{%
    \errmessage{(Inkscape) Transparency is used (non-zero) for the text in Inkscape, but the package 'transparent.sty' is not loaded}%
    \renewcommand\transparent[1]{}%
  }%
  \providecommand\rotatebox[2]{#2}%
  \newcommand*\fsize{\dimexpr\f@size pt\relax}%
  \newcommand*\lineheight[1]{\fontsize{\fsize}{#1\fsize}\selectfont}%
  \ifx\svgwidth\undefined%
    \setlength{\unitlength}{294.58441787bp}%
    \ifx\svgscale\undefined%
      \relax%
    \else%
      \setlength{\unitlength}{\unitlength * \real{\svgscale}}%
    \fi%
  \else%
    \setlength{\unitlength}{\svgwidth}%
  \fi%
  \global\let\svgwidth\undefined%
  \global\let\svgscale\undefined%
  \makeatother%
  \begin{picture}(1,0.89062491)%
    \lineheight{1}%
    \setlength\tabcolsep{0pt}%
    \put(0,0){\includegraphics[width=\unitlength]{use616241.eps}}%
    \put(0.38867161,0.49753666){\color[rgb]{0,0,0}\makebox(0,0)[lt]{\lineheight{1.25}\smash{\begin{tabular}[t]{l}$\alpha+\beta$\end{tabular}}}}%
    \put(0.7009248,0.35639106){\color[rgb]{0,0,0}\makebox(0,0)[lt]{\lineheight{1.25}\smash{\begin{tabular}[t]{l}$\alpha$\end{tabular}}}}%
    \put(0.21349342,0.37779084){\color[rgb]{0,0,0}\makebox(0,0)[lt]{\lineheight{1.25}\smash{\begin{tabular}[t]{l}$\beta$\end{tabular}}}}%
  \end{picture}%
\endgroup%

%% file: Pictures/use614592.eps_tex
\begingroup%
  \makeatletter%
  \providecommand\color[2][]{%
    \errmessage{(Inkscape) Color is used for the text in Inkscape, but the package 'color.sty' is not loaded}%
    \renewcommand\color[2][]{}%
  }%
  \providecommand\transparent[1]{%
    \errmessage{(Inkscape) Transparency is used (non-zero) for the text in Inkscape, but the package 'transparent.sty' is not loaded}%
    \renewcommand\transparent[1]{}%
  }%
  \providecommand\rotatebox[2]{#2}%
  \newcommand*\fsize{\dimexpr\f@size pt\relax}%
  \newcommand*\lineheight[1]{\fontsize{\fsize}{#1\fsize}\selectfont}%
  \ifx\svgwidth\undefined%
    \setlength{\unitlength}{381.34570024bp}%
    \ifx\svgscale\undefined%
      \relax%
    \else%
      \setlength{\unitlength}{\unitlength * \real{\svgscale}}%
    \fi%
  \else%
    \setlength{\unitlength}{\svgwidth}%
  \fi%
  \global\let\svgwidth\undefined%
  \global\let\svgscale\undefined%
  \makeatother%
  \begin{picture}(1,0.78662876)%
    \lineheight{1}%
    \setlength\tabcolsep{0pt}%
    \put(0,0){\includegraphics[width=\unitlength]{use614592.eps}}%
    \put(0.38702521,0.21739643){\color[rgb]{0,0,0}\makebox(0,0)[lt]{\lineheight{1.25}\smash{\begin{tabular}[t]{l}$n$ negative\end{tabular}}}}%
    \put(0.55843346,0.52465104){\color[rgb]{0,0,0}\makebox(0,0)[lt]{\lineheight{1.25}\smash{\begin{tabular}[t]{l}$n$ positive\end{tabular}}}}%
  \end{picture}%
\endgroup%

%% file: Pictures/use61459.eps_tex
\begingroup%
  \makeatletter%
  \providecommand\color[2][]{%
    \errmessage{(Inkscape) Color is used for the text in Inkscape, but the package 'color.sty' is not loaded}%
    \renewcommand\color[2][]{}%
  }%
  \providecommand\transparent[1]{%
    \errmessage{(Inkscape) Transparency is used (non-zero) for the text in Inkscape, but the package 'transparent.sty' is not loaded}%
    \renewcommand\transparent[1]{}%
  }%
  \providecommand\rotatebox[2]{#2}%
  \newcommand*\fsize{\dimexpr\f@size pt\relax}%
  \newcommand*\lineheight[1]{\fontsize{\fsize}{#1\fsize}\selectfont}%
  \ifx\svgwidth\undefined%
    \setlength{\unitlength}{381.34570024bp}%
    \ifx\svgscale\undefined%
      \relax%
    \else%
      \setlength{\unitlength}{\unitlength * \real{\svgscale}}%
    \fi%
  \else%
    \setlength{\unitlength}{\svgwidth}%
  \fi%
  \global\let\svgwidth\undefined%
  \global\let\svgscale\undefined%
  \makeatother%
  \begin{picture}(1,0.78662876)%
    \lineheight{1}%
    \setlength\tabcolsep{0pt}%
    \put(0,0){\includegraphics[width=\unitlength]{use61459.eps}}%
    \put(0.15220645,0.68970519){\color[rgb]{0,0,0}\makebox(0,0)[lt]{\lineheight{1.25}\smash{\begin{tabular}[t]{l}$m$ negative\end{tabular}}}}%
    \put(0.01789294,0.06163541){\color[rgb]{0,0,0}\makebox(0,0)[lt]{\lineheight{1.25}\smash{\begin{tabular}[t]{l}$m$ negative\end{tabular}}}}%
    \put(0.67365394,0.06163541){\color[rgb]{0,0,0}\makebox(0,0)[lt]{\lineheight{1.25}\smash{\begin{tabular}[t]{l}$m$ positive\end{tabular}}}}%
    \put(0.69214612,0.68894691){\color[rgb]{0,0,0}\makebox(0,0)[lt]{\lineheight{1.25}\smash{\begin{tabular}[t]{l}$m$ positive\end{tabular}}}}%
  \end{picture}%
\endgroup%

%% file: Pictures/use62127.eps_tex
\begingroup%
  \makeatletter%
  \providecommand\color[2][]{%
    \errmessage{(Inkscape) Color is used for the text in Inkscape, but the package 'color.sty' is not loaded}%
    \renewcommand\color[2][]{}%
  }%
  \providecommand\transparent[1]{%
    \errmessage{(Inkscape) Transparency is used (non-zero) for the text in Inkscape, but the package 'transparent.sty' is not loaded}%
    \renewcommand\transparent[1]{}%
  }%
  \providecommand\rotatebox[2]{#2}%
  \newcommand*\fsize{\dimexpr\f@size pt\relax}%
  \newcommand*\lineheight[1]{\fontsize{\fsize}{#1\fsize}\selectfont}%
  \ifx\svgwidth\undefined%
    \setlength{\unitlength}{367.4633655bp}%
    \ifx\svgscale\undefined%
      \relax%
    \else%
      \setlength{\unitlength}{\unitlength * \real{\svgscale}}%
    \fi%
  \else%
    \setlength{\unitlength}{\svgwidth}%
  \fi%
  \global\let\svgwidth\undefined%
  \global\let\svgscale\undefined%
  \makeatother%
  \begin{picture}(1,0.74530269)%
    \lineheight{1}%
    \setlength\tabcolsep{0pt}%
    \put(0,0){\includegraphics[width=\unitlength]{use62127.eps}}%
    \put(0.45093853,0.61283133){\color[rgb]{0,0,0}\makebox(0,0)[lt]{\lineheight{1.25}\smash{\begin{tabular}[t]{l}$s_0$-region\end{tabular}}}}%
    \put(0.57649708,0.15229129){\color[rgb]{0,0,0}\makebox(0,0)[lt]{\lineheight{1.25}\smash{\begin{tabular}[t]{l}$s_1$-region\end{tabular}}}}%
    \put(0.06762444,0.31982782){\color[rgb]{0,0,0}\makebox(0,0)[lt]{\lineheight{1.25}\smash{\begin{tabular}[t]{l}$s_2$-region\end{tabular}}}}%
  \end{picture}%
\endgroup%

%% file: Pictures/use61376.eps_tex
\begingroup%
  \makeatletter%
  \providecommand\color[2][]{%
    \errmessage{(Inkscape) Color is used for the text in Inkscape, but the package 'color.sty' is not loaded}%
    \renewcommand\color[2][]{}%
  }%
  \providecommand\transparent[1]{%
    \errmessage{(Inkscape) Transparency is used (non-zero) for the text in Inkscape, but the package 'transparent.sty' is not loaded}%
    \renewcommand\transparent[1]{}%
  }%
  \providecommand\rotatebox[2]{#2}%
  \newcommand*\fsize{\dimexpr\f@size pt\relax}%
  \newcommand*\lineheight[1]{\fontsize{\fsize}{#1\fsize}\selectfont}%
  \ifx\svgwidth\undefined%
    \setlength{\unitlength}{326.80457168bp}%
    \ifx\svgscale\undefined%
      \relax%
    \else%
      \setlength{\unitlength}{\unitlength * \real{\svgscale}}%
    \fi%
  \else%
    \setlength{\unitlength}{\svgwidth}%
  \fi%
  \global\let\svgwidth\undefined%
  \global\let\svgscale\undefined%
  \makeatother%
  \begin{picture}(1,0.79107981)%
    \lineheight{1}%
    \setlength\tabcolsep{0pt}%
    \put(0,0){\includegraphics[width=\unitlength]{use61376.eps}}%
    \put(0.24647923,0.5136096){\color[rgb]{0,0,0}\makebox(0,0)[lt]{\lineheight{1.25}\smash{\begin{tabular}[t]{l}$\dfrac{x+1}{2}$\end{tabular}}}}%
    \put(0.3705125,0.18218026){\color[rgb]{0,0,0}\makebox(0,0)[lt]{\lineheight{1.25}\smash{\begin{tabular}[t]{l}$\dfrac{n-2}{2}$\end{tabular}}}}%
    \put(0.47262518,0.25495021){\color[rgb]{0,0,0}\makebox(0,0)[lt]{\lineheight{1.25}\smash{\begin{tabular}[t]{l}$\dfrac{m}{2}$\end{tabular}}}}%
    \put(0.6475078,0.56950419){\color[rgb]{0,0,0}\makebox(0,0)[lt]{\lineheight{1.25}\smash{\begin{tabular}[t]{l}$\dfrac{y-2}{2}$\end{tabular}}}}%
  \end{picture}%
\endgroup%

%% file: Pictures/use615372.eps_tex
\begingroup%
  \makeatletter%
  \providecommand\color[2][]{%
    \errmessage{(Inkscape) Color is used for the text in Inkscape, but the package 'color.sty' is not loaded}%
    \renewcommand\color[2][]{}%
  }%
  \providecommand\transparent[1]{%
    \errmessage{(Inkscape) Transparency is used (non-zero) for the text in Inkscape, but the package 'transparent.sty' is not loaded}%
    \renewcommand\transparent[1]{}%
  }%
  \providecommand\rotatebox[2]{#2}%
  \newcommand*\fsize{\dimexpr\f@size pt\relax}%
  \newcommand*\lineheight[1]{\fontsize{\fsize}{#1\fsize}\selectfont}%
  \ifx\svgwidth\undefined%
    \setlength{\unitlength}{276.10949204bp}%
    \ifx\svgscale\undefined%
      \relax%
    \else%
      \setlength{\unitlength}{\unitlength * \real{\svgscale}}%
    \fi%
  \else%
    \setlength{\unitlength}{\svgwidth}%
  \fi%
  \global\let\svgwidth\undefined%
  \global\let\svgscale\undefined%
  \makeatother%
  \begin{picture}(1,0.67387034)%
    \lineheight{1}%
    \setlength\tabcolsep{0pt}%
    \put(0,0){\includegraphics[width=\unitlength]{use615372.eps}}%
    \put(0.53704544,0.26881018){\color[rgb]{0,0.01176471,0.07058824}\makebox(0,0)[lt]{\lineheight{1.25}\smash{\begin{tabular}[t]{l}$\dfrac{x-1}{2}$\end{tabular}}}}%
    \put(0.51009807,0.54751631){\color[rgb]{0,0.01176471,0.07058824}\makebox(0,0)[lt]{\lineheight{1.25}\smash{\begin{tabular}[t]{l}$\dfrac{y+2}{2}$\end{tabular}}}}%
  \end{picture}%
\endgroup%

%% file: Pictures/use62128.eps_tex
\begingroup%
  \makeatletter%
  \providecommand\color[2][]{%
    \errmessage{(Inkscape) Color is used for the text in Inkscape, but the package 'color.sty' is not loaded}%
    \renewcommand\color[2][]{}%
  }%
  \providecommand\transparent[1]{%
    \errmessage{(Inkscape) Transparency is used (non-zero) for the text in Inkscape, but the package 'transparent.sty' is not loaded}%
    \renewcommand\transparent[1]{}%
  }%
  \providecommand\rotatebox[2]{#2}%
  \newcommand*\fsize{\dimexpr\f@size pt\relax}%
  \newcommand*\lineheight[1]{\fontsize{\fsize}{#1\fsize}\selectfont}%
  \ifx\svgwidth\undefined%
    \setlength{\unitlength}{437.76100806bp}%
    \ifx\svgscale\undefined%
      \relax%
    \else%
      \setlength{\unitlength}{\unitlength * \real{\svgscale}}%
    \fi%
  \else%
    \setlength{\unitlength}{\svgwidth}%
  \fi%
  \global\let\svgwidth\undefined%
  \global\let\svgscale\undefined%
  \makeatother%
  \begin{picture}(1,0.66976452)%
    \lineheight{1}%
    \setlength\tabcolsep{0pt}%
    \put(0,0){\includegraphics[width=\unitlength]{use62128.eps}}%
    \put(0.15325342,0.19743741){\color[rgb]{0,0,0}\makebox(0,0)[lt]{\lineheight{1.25}\smash{\begin{tabular}[t]{l}$(0,0)$\\$j= 0 $\end{tabular}}}}%
    \put(0.22889658,0.15856052){\color[rgb]{0,0,0}\makebox(0,0)[lt]{\lineheight{1.25}\smash{\begin{tabular}[t]{l}$(0,1)$\\$j= 1 $\end{tabular}}}}%
    \put(0.30535721,0.19697371){\color[rgb]{0,0,0}\makebox(0,0)[lt]{\lineheight{1.25}\smash{\begin{tabular}[t]{l}$(0,2)$\\$j= 2 $\end{tabular}}}}%
    \put(0.37872675,0.16022461){\color[rgb]{0,0,0}\makebox(0,0)[lt]{\lineheight{1.25}\smash{\begin{tabular}[t]{l}$(0,3)$\\$j= 3 $\end{tabular}}}}%
    \put(0.4401398,0.19386814){\color[rgb]{0,0,0}\makebox(0,0)[lt]{\lineheight{1.25}\smash{\begin{tabular}[t]{l}$(0,4)$\\$j= 4 $\end{tabular}}}}%
    \put(0.52057457,0.16177739){\color[rgb]{0,0,0}\makebox(0,0)[lt]{\lineheight{1.25}\smash{\begin{tabular}[t]{l}$(0,5)$\\$j= 5 $\end{tabular}}}}%
    \put(0.22872678,0.3237838){\color[rgb]{0,0,0}\makebox(0,0)[lt]{\lineheight{1.25}\smash{\begin{tabular}[t]{l}$(2,0)$\\$j= 4 $\end{tabular}}}}%
    \put(0.30372676,0.27875327){\color[rgb]{0,0,0}\makebox(0,0)[lt]{\lineheight{1.25}\smash{\begin{tabular}[t]{l}$(2,1)$\\$j= 5 $\end{tabular}}}}%
    \put(0.37655285,0.32585422){\color[rgb]{0,0,0}\makebox(0,0)[lt]{\lineheight{1.25}\smash{\begin{tabular}[t]{l}$(2,2)$\\$j= 0 $\end{tabular}}}}%
    \put(0.45644415,0.27771812){\color[rgb]{0,0,0}\makebox(0,0)[lt]{\lineheight{1.25}\smash{\begin{tabular}[t]{l}$(2,3)$\\$j= 1 $\end{tabular}}}}%
    \put(0.53307458,0.32999493){\color[rgb]{0,0,0}\makebox(0,0)[lt]{\lineheight{1.25}\smash{\begin{tabular}[t]{l}$(2,4)$\\$j= 2 $\end{tabular}}}}%
    \put(0.60427023,0.2782357){\color[rgb]{0,0,0}\makebox(0,0)[lt]{\lineheight{1.25}\smash{\begin{tabular}[t]{l}$(2,5)$\\$j= 3 $\end{tabular}}}}%
    \put(0.30209634,0.43661896){\color[rgb]{0,0,0}\makebox(0,0)[lt]{\lineheight{1.25}\smash{\begin{tabular}[t]{l}$(4,0)$\\$j= 2 $\end{tabular}}}}%
    \put(0.38144415,0.40245785){\color[rgb]{0,0,0}\makebox(0,0)[lt]{\lineheight{1.25}\smash{\begin{tabular}[t]{l}$(4,1)$\\$j= 3 $\end{tabular}}}}%
    \put(0.45046588,0.4469708){\color[rgb]{0,0,0}\makebox(0,0)[lt]{\lineheight{1.25}\smash{\begin{tabular}[t]{l}$(4,2)$\\$j= 4 $\end{tabular}}}}%
    \put(0.52818327,0.40349306){\color[rgb]{0,0,0}\makebox(0,0)[lt]{\lineheight{1.25}\smash{\begin{tabular}[t]{l}$(4,3)$\\$j= 5 $\end{tabular}}}}%
    \put(0.6048137,0.4402421){\color[rgb]{0,0,0}\makebox(0,0)[lt]{\lineheight{1.25}\smash{\begin{tabular}[t]{l}$(4,4)$\\$j= 0 $\end{tabular}}}}%
    \put(0.68307456,0.40659863){\color[rgb]{0,0,0}\makebox(0,0)[lt]{\lineheight{1.25}\smash{\begin{tabular}[t]{l}$(4,5)$\\$j= 1 $\end{tabular}}}}%
  \end{picture}%
\endgroup%

%% file: Pictures/use62045.eps_tex
\begingroup%
  \makeatletter%
  \providecommand\color[2][]{%
    \errmessage{(Inkscape) Color is used for the text in Inkscape, but the package 'color.sty' is not loaded}%
    \renewcommand\color[2][]{}%
  }%
  \providecommand\transparent[1]{%
    \errmessage{(Inkscape) Transparency is used (non-zero) for the text in Inkscape, but the package 'transparent.sty' is not loaded}%
    \renewcommand\transparent[1]{}%
  }%
  \providecommand\rotatebox[2]{#2}%
  \newcommand*\fsize{\dimexpr\f@size pt\relax}%
  \newcommand*\lineheight[1]{\fontsize{\fsize}{#1\fsize}\selectfont}%
  \ifx\svgwidth\undefined%
    \setlength{\unitlength}{463.25639794bp}%
    \ifx\svgscale\undefined%
      \relax%
    \else%
      \setlength{\unitlength}{\unitlength * \real{\svgscale}}%
    \fi%
  \else%
    \setlength{\unitlength}{\svgwidth}%
  \fi%
  \global\let\svgwidth\undefined%
  \global\let\svgscale\undefined%
  \makeatother%
  \begin{picture}(1,0.65105389)%
    \lineheight{1}%
    \setlength\tabcolsep{0pt}%
    \put(0,0){\includegraphics[width=\unitlength]{use62045.eps}}%
    \put(0.36110292,0.25854028){\color[rgb]{0,0,0}\makebox(0,0)[lt]{\lineheight{1.25}\smash{\begin{tabular}[t]{l}$\beta$\end{tabular}}}}%
    \put(0.46973372,0.32973667){\color[rgb]{0,0,0}\makebox(0,0)[lt]{\lineheight{1.25}\smash{\begin{tabular}[t]{l}$\alpha+\beta$\end{tabular}}}}%
    \put(0.62320412,0.25746292){\color[rgb]{0,0,0}\makebox(0,0)[lt]{\lineheight{1.25}\smash{\begin{tabular}[t]{l}$\alpha$\end{tabular}}}}%
  \end{picture}%
\endgroup%

%% file: Pictures/use62046.eps_tex
\begingroup%
  \makeatletter%
  \providecommand\color[2][]{%
    \errmessage{(Inkscape) Color is used for the text in Inkscape, but the package 'color.sty' is not loaded}%
    \renewcommand\color[2][]{}%
  }%
  \providecommand\transparent[1]{%
    \errmessage{(Inkscape) Transparency is used (non-zero) for the text in Inkscape, but the package 'transparent.sty' is not loaded}%
    \renewcommand\transparent[1]{}%
  }%
  \providecommand\rotatebox[2]{#2}%
  \newcommand*\fsize{\dimexpr\f@size pt\relax}%
  \newcommand*\lineheight[1]{\fontsize{\fsize}{#1\fsize}\selectfont}%
  \ifx\svgwidth\undefined%
    \setlength{\unitlength}{460.28809969bp}%
    \ifx\svgscale\undefined%
      \relax%
    \else%
      \setlength{\unitlength}{\unitlength * \real{\svgscale}}%
    \fi%
  \else%
    \setlength{\unitlength}{\svgwidth}%
  \fi%
  \global\let\svgwidth\undefined%
  \global\let\svgscale\undefined%
  \makeatother%
  \begin{picture}(1,0.54833336)%
    \lineheight{1}%
    \setlength\tabcolsep{0pt}%
    \put(0,0){\includegraphics[width=\unitlength]{use62046.eps}}%
    \put(0.36189318,0.21452481){\color[rgb]{0,0,0}\makebox(0,0)[lt]{\lineheight{1.25}\smash{\begin{tabular}[t]{l}$\beta$\end{tabular}}}}%
    \put(0.46715956,0.28432311){\color[rgb]{0,0,0}\makebox(0,0)[lt]{\lineheight{1.25}\smash{\begin{tabular}[t]{l}$\alpha+\beta$\end{tabular}}}}%
    \put(0.62161965,0.21158328){\color[rgb]{0,0,0}\makebox(0,0)[lt]{\lineheight{1.25}\smash{\begin{tabular}[t]{l}$\alpha$\end{tabular}}}}%
  \end{picture}%
\endgroup%

%% file: Pictures/use62047.eps_tex
\begingroup%
  \makeatletter%
  \providecommand\color[2][]{%
    \errmessage{(Inkscape) Color is used for the text in Inkscape, but the package 'color.sty' is not loaded}%
    \renewcommand\color[2][]{}%
  }%
  \providecommand\transparent[1]{%
    \errmessage{(Inkscape) Transparency is used (non-zero) for the text in Inkscape, but the package 'transparent.sty' is not loaded}%
    \renewcommand\transparent[1]{}%
  }%
  \providecommand\rotatebox[2]{#2}%
  \newcommand*\fsize{\dimexpr\f@size pt\relax}%
  \newcommand*\lineheight[1]{\fontsize{\fsize}{#1\fsize}\selectfont}%
  \ifx\svgwidth\undefined%
    \setlength{\unitlength}{460.28809969bp}%
    \ifx\svgscale\undefined%
      \relax%
    \else%
      \setlength{\unitlength}{\unitlength * \real{\svgscale}}%
    \fi%
  \else%
    \setlength{\unitlength}{\svgwidth}%
  \fi%
  \global\let\svgwidth\undefined%
  \global\let\svgscale\undefined%
  \makeatother%
  \begin{picture}(1,0.54833336)%
    \lineheight{1}%
    \setlength\tabcolsep{0pt}%
    \put(0,0){\includegraphics[width=\unitlength]{use62047.eps}}%
    \put(0.48947872,0.16976518){\color[rgb]{0,0,0}\makebox(0,0)[lt]{\lineheight{1.25}\smash{\begin{tabular}[t]{l}$1$\end{tabular}}}}%
    \put(0.50645884,0.20840972){\color[rgb]{0,0,0}\makebox(0,0)[lt]{\lineheight{1.25}\smash{\begin{tabular}[t]{l}$s_0$\end{tabular}}}}%
    \put(0.50831651,0.348575){\color[rgb]{0,0,0}\makebox(0,0)[lt]{\lineheight{1.25}\smash{\begin{tabular}[t]{l}$s_0$\end{tabular}}}}%
    \put(0.50644151,0.48920001){\color[rgb]{0,0,0}\makebox(0,0)[lt]{\lineheight{1.25}\smash{\begin{tabular}[t]{l}$s_0$\end{tabular}}}}%
    \put(0.50748318,0.04815832){\color[rgb]{0,0,0}\makebox(0,0)[lt]{\lineheight{1.25}\smash{\begin{tabular}[t]{l}$s_0$\end{tabular}}}}%
    \put(0.52504301,0.2815214){\color[rgb]{0,0,0}\makebox(0,0)[lt]{\lineheight{1.25}\smash{\begin{tabular}[t]{l}$s_1s_2s_1$\end{tabular}}}}%
    \put(0.52451013,0.42042868){\color[rgb]{0,0,0}\makebox(0,0)[lt]{\lineheight{1.25}\smash{\begin{tabular}[t]{l}$s_1s_2s_1$\end{tabular}}}}%
    \put(0.52451013,0.12815787){\color[rgb]{0,0,0}\makebox(0,0)[lt]{\lineheight{1.25}\smash{\begin{tabular}[t]{l}$s_1s_2s_1$\end{tabular}}}}%
  \end{picture}%
\endgroup%

%% file: Pictures/use61457.eps_tex
\begingroup%
  \makeatletter%
  \providecommand\color[2][]{%
    \errmessage{(Inkscape) Color is used for the text in Inkscape, but the package 'color.sty' is not loaded}%
    \renewcommand\color[2][]{}%
  }%
  \providecommand\transparent[1]{%
    \errmessage{(Inkscape) Transparency is used (non-zero) for the text in Inkscape, but the package 'transparent.sty' is not loaded}%
    \renewcommand\transparent[1]{}%
  }%
  \providecommand\rotatebox[2]{#2}%
  \newcommand*\fsize{\dimexpr\f@size pt\relax}%
  \newcommand*\lineheight[1]{\fontsize{\fsize}{#1\fsize}\selectfont}%
  \ifx\svgwidth\undefined%
    \setlength{\unitlength}{346.7504234bp}%
    \ifx\svgscale\undefined%
      \relax%
    \else%
      \setlength{\unitlength}{\unitlength * \real{\svgscale}}%
    \fi%
  \else%
    \setlength{\unitlength}{\svgwidth}%
  \fi%
  \global\let\svgwidth\undefined%
  \global\let\svgscale\undefined%
  \makeatother%
  \begin{picture}(1,0.90044228)%
    \lineheight{1}%
    \setlength\tabcolsep{0pt}%
    \put(0,0){\includegraphics[width=\unitlength]{use61457.eps}}%
  \end{picture}%
\endgroup%

%% file: Pictures/use61458.eps_tex
\begingroup%
  \makeatletter%
  \providecommand\color[2][]{%
    \errmessage{(Inkscape) Color is used for the text in Inkscape, but the package 'color.sty' is not loaded}%
    \renewcommand\color[2][]{}%
  }%
  \providecommand\transparent[1]{%
    \errmessage{(Inkscape) Transparency is used (non-zero) for the text in Inkscape, but the package 'transparent.sty' is not loaded}%
    \renewcommand\transparent[1]{}%
  }%
  \providecommand\rotatebox[2]{#2}%
  \newcommand*\fsize{\dimexpr\f@size pt\relax}%
  \newcommand*\lineheight[1]{\fontsize{\fsize}{#1\fsize}\selectfont}%
  \ifx\svgwidth\undefined%
    \setlength{\unitlength}{346.7504234bp}%
    \ifx\svgscale\undefined%
      \relax%
    \else%
      \setlength{\unitlength}{\unitlength * \real{\svgscale}}%
    \fi%
  \else%
    \setlength{\unitlength}{\svgwidth}%
  \fi%
  \global\let\svgwidth\undefined%
  \global\let\svgscale\undefined%
  \makeatother%
  \begin{picture}(1,0.90044228)%
    \lineheight{1}%
    \setlength\tabcolsep{0pt}%
    \put(0,0){\includegraphics[width=\unitlength]{use61458.eps}}%
  \end{picture}%
\endgroup%

%% file: Pictures/use50311.eps_tex
\begingroup%
  \makeatletter%
  \providecommand\color[2][]{%
    \errmessage{(Inkscape) Color is used for the text in Inkscape, but the package 'color.sty' is not loaded}%
    \renewcommand\color[2][]{}%
  }%
  \providecommand\transparent[1]{%
    \errmessage{(Inkscape) Transparency is used (non-zero) for the text in Inkscape, but the package 'transparent.sty' is not loaded}%
    \renewcommand\transparent[1]{}%
  }%
  \providecommand\rotatebox[2]{#2}%
  \newcommand*\fsize{\dimexpr\f@size pt\relax}%
  \newcommand*\lineheight[1]{\fontsize{\fsize}{#1\fsize}\selectfont}%
  \ifx\svgwidth\undefined%
    \setlength{\unitlength}{167.62160894bp}%
    \ifx\svgscale\undefined%
      \relax%
    \else%
      \setlength{\unitlength}{\unitlength * \real{\svgscale}}%
    \fi%
  \else%
    \setlength{\unitlength}{\svgwidth}%
  \fi%
  \global\let\svgwidth\undefined%
  \global\let\svgscale\undefined%
  \makeatother%
  \begin{picture}(1,0.91990837)%
    \lineheight{1}%
    \setlength\tabcolsep{0pt}%
    \put(0,0){\includegraphics[width=\unitlength]{use50311.eps}}%
    \put(0.35279045,0.53394075){\color[rgb]{0.4,0.4,0.4}\makebox(0,0)[lt]{\lineheight{1.25}\smash{\begin{tabular}[t]{l}$(1,0)$\end{tabular}}}}%
    \put(0.47075482,0.33402522){\color[rgb]{0.4,0.4,0.4}\makebox(0,0)[lt]{\lineheight{1.25}\smash{\begin{tabular}[t]{l}$(0,1)$\end{tabular}}}}%
    \put(0.35806916,0.41278025){\color[rgb]{0.4,0.4,0.4}\makebox(0,0)[lt]{\lineheight{1.25}\smash{\begin{tabular}[t]{l}$(0,0)$\end{tabular}}}}%
    \put(0.48169239,0.58030661){\color[rgb]{0.4,0.4,0.4}\makebox(0,0)[lt]{\lineheight{1.25}\smash{\begin{tabular}[t]{l}$(2,0)$\end{tabular}}}}%
  \end{picture}%
\endgroup%